\newcommand{\subjclass}[2][2010]{%
  \let\@oldtitle\@title%
  \gdef\@title{\@oldtitle\footnotetext{#1 \emph{Mathematics subject classification.} #2}}%
}
\newcommand{\ep}{\epsilon}
\newcommand{\eps}{\epsilon}
\newcommand{\leqc}{\lesssim}
\newcommand{\grad}{\nabla}
\newcommand{\strat}{\circ}
\newcommand{\norm}[1]{\left|\left| #1 \right|\right|}
\newcommand{\abs}[1]{\left| #1 \right|}
\newcommand{\set}[1]{\left\{ #1 \right\}}
\newcommand{\brak}[1]{\left\langle #1 \right\rangle} 
\newcommand{\D}{\mathbb{D}}
\newcommand{\R}{\mathbb{R}}
\newcommand{\N}{\mathbb{N}}
\newcommand{\C}{\mathbb{C}}
\newcommand{\Z}{\mathbb{Z}}
\newcommand{\T}{\mathbb{T}}
\newcommand{\K}{\mathbb{K}}
\renewcommand{\S}{\mathbb{S}}
\newcommand{\tensor}{\otimes}
\newcommand{\cM}{\mathcal{M}}
\newcommand{\dee}{\mathrm{d}}
\newcommand{\dt}{\dee t}
\newcommand{\dx}{\dee x}
\newcommand{\dy}{\dee y}
\newcommand{\dq}{\dee q}
\DeclareMathOperator{\Div}{\mathrm{div}}
\DeclareMathOperator{\Id}{\mathrm{Id}}
\DeclareMathOperator{\tr}{\mathrm{tr}}
\DeclareMathOperator{\Span}{\mathrm{span}}
\newcommand{\1}{\mathbbm{1}}
\renewcommand{\P}{\mathbf{P}}
\newcommand{\EE}{\mathbf E}
\newcommand{\PP}{\mathbf P}
\newtheorem{theorem}{Theorem}[section]
\newtheorem{proposition}[theorem]{Proposition}
\newtheorem{corollary}[theorem]{Corollary}
\newtheorem{lemma}[theorem]{Lemma}
\newtheorem*{lemma*}{Lemma}
\newtheorem{assumption}{Assumption}
\theoremstyle{definition}
\newtheorem{definition}[theorem]{Definition}
\newtheorem{remark}[theorem]{Remark}
\numberwithin{equation}{section}
\begin{document}

\title{Chaos in stochastic 2d Galerkin-Navier-Stokes} 
\author{Jacob Bedrossian\thanks{\footnotesize Department of Mathematics, University of Maryland, College Park, MD 20742, USA \href{mailto:jacob@math.umd.edu}{\texttt{jacob@math.umd.edu}}. J.B. was supported by National Science Foundation CAREER grant DMS-1552826 and the Simons Foundation (2020 Simons Fellowship)} \and Sam Punshon-Smith\thanks{\footnotesize Division of Applied Mathematics,  Brown University, Providence, RI 02906, USA \href{mailto:punshs@brown.edu}{\texttt{punshs@brown.edu}}. This material was based upon work supported by the National Science Foundation under Award No. DMS-1803481.}}

\maketitle

\begin{abstract}
We prove that all Galerkin truncations of the 2d stochastic Navier-Stokes equations in vorticity form on any rectangular torus subjected to hypoelliptic, additive stochastic forcing are chaotic at sufficiently small viscosity, provided the frequency truncation satisfies $N\geq 392$. By ``chaotic'' we mean having a strictly positive Lyapunov exponent, i.e. almost-sure asymptotic exponential growth of the derivative with respect to generic initial conditions. 
A sufficient condition for such results was derived in previous joint work with Alex Blumenthal which reduces the question to the non-degeneracy of a matrix Lie algebra implying H\"ormander's condition for the Markov process lifted to the sphere bundle (projective hypoellipticity). 
The purpose of this work is to reformulate this condition to be more amenable for Galerkin truncations of PDEs and then to verify this condition using a) a reduction to genericity properties of a diagonal sub-algebra inspired by the root space decomposition of semi-simple Lie algebras and b) computational algebraic geometry executed by Maple\footnote{Maple is a trademark of Waterloo Maple Inc.} in exact rational arithmetic. 
Note that even though we use a computer assisted proof, the result is valid for all aspect ratios and all sufficiently high dimensional truncations; in fact, certain steps simplify in the formal infinite dimensional limit.  
\end{abstract}

\setcounter{tocdepth}{1}
{\small\tableofcontents}

\section{Introduction}

Chaos is fundamental to our understanding of fluids and fluid-like systems in realistic settings and is thought to be an integral aspect of turbulence in these systems \cite{BMOV05}. However, there are few mathematically rigorous results on chaos in fluid models, even for finite dimensional models. In this paper we consider Galerkin truncations of the 2d Navier-Stokes equations on a torus of aspect ratio $r > 0$ and subjected to additive stochastic forcing. This system can be written as a member of the following class of stochastic differential equations (SDEs) on $\R^d$, 
\begin{align}
	\dee x_t = \left( B(x_t,x_t) - \eps Ax_t \right)\,\dt + \sqrt{\eps} \sum_{k=1}^r e_k \dee W^k_t \, , \label{eq:xtclass}
\end{align}
where $\{e_k\}_{k=1}^r$ are a family of constant vectors and $\{W^k\}_{k=1}^r$ are independent standard Wiener processes with respect to a canonical stochastic basis $(\Omega,\mathscr{F},(\mathscr{F}_t),\P)$. 
Here $A$ is symmetric positive definite, and $B$ is bilinear satisfying $x \cdot B(x,x) = 0$ and $\Div B = 0$ as well as the ``cancellation property'' $B(e_k,e_k) = 0$ (a more general cancellation property can be taken, see \cite{BBPS20}). 
Besides Galerkin truncations of the Navier-Stokes equations (see Section \ref{sec:GNSE} below), this class includes Lorenz-96 \cite{Lorenz1996},  and the shell models GOY \cite{Gledzer1973} and SABRA \cite{LvovEtAl98}, all of which are observed to be chaotic for $\eps$ small (see e.g. discussions in \cite{BBPS20,Majda16,KP10} for Lorenz-96, e.g. \cite{Ditlevsen2010} for GOY and SABRA, and e.g. \cite{BMOV05} for Galerkin-Navier-Stokes).   
The balance of dissipation and forcing (usually called `fluctuation-dissipation') is chosen so that there is a non-trivial limit as $\eps \to 0$; this balance can always be taken for small damping/large forcing regimes by a suitable re-scaling of $t$ and $x$ (see Remark \ref{rmk:rescale}). 

For many physical models, under fairly general conditions on $\set{e_k}_{k=1}^r$ it is possible to show that there is a unique stationary measure $\mu$ associated to the Markov process of $(x_t)$ solving \eqref{eq:xtclass}(see Section \ref{sec:Hypo} below). 
We denote the stochastic flow of diffeomorphisms defined by the solution map as $x \mapsto x_t =: \Phi^t_\omega(x), t \geq 0$.  
For SDE of the form \eqref{eq:xtclass}, the stationary measures have Gaussian upper bounds (see Section \ref{sec:Hypo} or \cite{BL20}), and so it is possible\footnote{This follows by the Kingman subadditive ergodic theorem; see e.g. \cite{kifer2012ergodic}.} to define a top Lyapunov exponent via the limit
\begin{align*}
\lambda_1 := \lim_{t \to \infty}\frac{1}{t} \log \abs{D_x \Phi^t_\omega},  
\end{align*}
which holds for $\mu \times \PP$ almost every $(x,\omega)\in \R^d\times \Omega$.
In particular, the Lyapunov exponent $\lambda_1$ is deterministic and well-defined independent of initial condition or random noise path. 
When $\lambda_1 > 0$, we say that \eqref{eq:general-SDE} \emph{chaotic} as it shows an exponential sensitivity of the trajectory to changes in the initial condition. 
For deterministic systems, verifying $\lambda_1 > 0$ is notoriously difficult; see e.g. the discussions in \cite{BBPS20} and \cite{young2013mathematical, pesin2010open, wilkinson2017lyapunov}. 
Even in the random case, there are relatively few methods. The methods \`a la Furstenberg (see e.g. \cite{carverhill1987furstenberg,virtser1980products, royer1980croissance, ledrappier1986positivity, baxendale1989lyapunov}) are powerful when applicable, but are not quantitative and cannot be used to obtain $\lambda_1 > 0 $ for dissipative systems. 
For systems with a lot of rigid structure, it is sometimes possible to obtain even asymptotic expansions of $\lambda_1$ in small noise limits; see e.g. \cite{ausMilstein, pardoux1988lyapunov, pinsky1988lyapunov, imkeller1999explicit, moshchuk1998moment,APW1986, baxendale2002lyapunov, baxendale2004vanderpol} however, these methods generally require almost complete knowledge of the limiting $\eps \to 0$ dynamics and it is far from clear how these arguments could be adapted to more complicated systems such as the Galerkin-Navier-Stokes equations or even Lorenz-96. 

Our recent work with Alex Blumenthal \cite{BBPS20} puts forward a new method for obtaining lower bounds on $\lambda_1$ for SDEs. 
Therein, we used the method to prove that the Lorenz-96 model subject to stochastic forcing is chaotic for all $\eps$ sufficiently small; the Lorenz-96 model is commonly used in applied mathematics as a test case for numerical or analytical methods for high-dimensional, chaotic systems \cite{Majda16,KP10,PazoEtAl08,BCFV02,OttEtAl04}, but no mathematical proof of chaos had previously been found even in the stochastic case. 
More generally, for each SDE in the class \eqref{eq:xtclass}, we formulated a sufficient condition for chaos in terms of a certain Lie algebra associated to the nonlinearity.  
In particular, the Lie algebraic condition of \cite{BBPS20} implies the quantitative estimate
\begin{align*}
\lim_{\eps \to 0} \frac{\lambda_1(\eps)}{\eps} = \infty, 
\end{align*}
and hence $\exists \eps_0 > 0$ such that  $\forall \eps \in (0,\eps_0)$ there holds $\lambda_1 > 0$.
 
In this paper, we first provide a convenient reformulation of the Lie algebra condition of \cite{BBPS20}, particularly amenable to application in Galerkin approximations of PDEs and other complex-valued SDEs, using basic concepts from complex geometry.
Our main result is to verify this condition for the Galerkin truncations of the 2d Navier-Stokes equations with frequency cutoff $N \geq 392$ on torii of any aspect ratio (Theorem \ref{thm:mainchaos}), thus proving chaos for all $\eps$ sufficiently small.

Inspired by the classical root space decomposition of semi-simple Lie algebras, we reduce the problem to proving genericity of a diagonal sub-algebra.
Using the algebraic structure of the nonlinearity in Fourier space, we further reduce this question to showing that a certain list of polynomial systems have only trivial solutions.  
These are exhaustively verified to be inconsistent using methods from computational algebraic geometry carried out with Maple \cite{maple}. 
Note that despite using a computer assisted proof, our results nevertheless apply in arbitrary frequency truncation and arbitrary aspect ratio and, in a certain sense, well-suited for infinite dimensions.  
We believe the method put forward in this paper should be applicable to other Galerkin approximations of PDEs, both real and complex valued, provided the nonlinearity is a finite-degree polynomial. 

\subsection{2d Galerkin-Navier-Stokes equations} \label{sec:GNSE}
Denote the torus of arbitrary side-length ratio $\T^2_r = [0,2\pi) \times [0, \frac{2\pi}{r})$ (periodized) for $r > 0$.  
Recall that the Navier-Stokes equations on $\T^2_r$ in vorticity form are given by 
\begin{align*}
\partial_t w + u \cdot \grad w = \eps \Delta \omega + \sqrt{\eps} \dot{W}_t,  
\end{align*}
where $u$ is the divergence free velocity field satisfying the Bio-Savart law $u = \nabla^{\perp}(-\Delta)^{-1}w$ and $\dot{W}_t$ is a white-in time, colored-in-space Gaussian forcing assumed to be diagonalizable with respect to the Fourier basis. 
The parameter $\eps$ represents the kinematic viscosity; the noise has been scaled with a matching $\sqrt{\eps}$ so that the dynamics have a non-trivial limit when $\eps \to 0$. 
For definiteness, we will assume the forcing is of the form
\begin{align*}
W_t = 2\sum_{k \in \mathbb Z^2_+} \alpha_k (\cos (k_1 x_1 + r k_2 x_2)) W_t^{(k;a)} + \beta_k (\sin (k_1 x_1 + r k_2 x_2)) W_t^{(k;b)}, 
\end{align*}
 $\{W^{(k,a)}, W^{(k,b)}\}_{k=1}^r$ are independent standard Wiener processes with respect to a canonical stochastic basis $(\Omega,\mathscr{F},(\mathscr{F}_t),\P)$ 
and that if $\alpha_k \neq 0$ then $\beta_k \neq 0$ and vice-versa. 
Here we are denoting the ``upper'' lattice
\[
\Z^2_+ := \set{(k_1,k_2) \in \Z^2_0: k_2 > 0}\cup \set{(k_1,0)\in \Z^2_0 \,:\, k_1 > 0},
\]
where $\Z^2_0 := \Z^2\backslash\{0\}$. We denote the set of driving modes by
\[
	\mathcal{K} := \{k\in \Z^2_+ \,:\, \alpha_k, \beta_k \neq 0\}.
\]

Upon taking the Fourier transform one can re-write the equations in terms of the complex coefficient $w_k = \frac{r}{(2\pi)^2}\int_{\T^2_r} e^{- i \left( x_1 k_1 + r x_2 k_2\right)} w(x)\,\mathrm{d}x$ for each $k\in \Z^2_0$ and satisfies the reality constraint $w_{-k} = \overline{w_k}$. In Fourier space, the nonlinearity $B(w,w) = -u\cdot\nabla w$ takes the form for each $\ell\in \Z^2_0$
\begin{equation}\label{eq:B-def}
	B_\ell(w,w) := \frac{1}{|\T_r^2|}\int_{\T^2_r} B(w,w) e^{-i (\ell_1 x_1 + r \ell_2 x_2)}\dx = \frac{1}{2}\sum_{k+j=\ell} c_{j,k} w_j w_k,
\end{equation}
where the sum is over all $j,k \in Z^2_0$ such that $j+k=\ell$, the symmetrized coefficient is
\[
    c_{j,k} := \langle j^\perp,k\rangle_r\left(\frac{1}{|k|^2_r} - \frac{1}{|j|^2_r}\right), 
\]
and we are using the notation
\begin{align*}
\langle j^\perp,k\rangle_r = r (j_2k_1 - j_1k_2), \quad \abs{k}_r^2 = k_1^2 + r^2 k_2^2. 
\end{align*}
In what follows $c_{j,k}$ always depends on $r$ but we suppress the dependence for notational simplicity. 
One way to deal with the reality constraint $w_{-k} = \overline{w}_k$ is to restrict the complex valued $w_k$ to the upper lattice $\Z^2_+$ and encode the values in the negative lattice $\mathbb{Z}_{-}^2:= - \Z^2_+$ through complex conjugation $w_{-k} = \overline{w}_k$. In this sense we can think of the vorticity $w = (w_\ell)$ as belonging to the complex space $\C^{\Z^2_+}$, and the Navier-Stokes equations is seen to be the following complex-valued evolution equation on $\C^{\Z^2_+}$
\begin{align}
	\dot{w}_\ell = B_\ell(w,w) - \nu \abs{\ell}^2_r w_\ell + \sqrt{\nu} \left(  \alpha_\ell \dot{W}_t^{(\ell;a)} + i \beta_\ell \dot{W}_t^{(\ell;b)} \right). \label{def:NSEFF}
\end{align}
The above formulation gives a clear method for finite-dimensional approximation, known as a Galerkin approximation. 
Define the truncated lattice 
\[
\Z^2_{+,N} = \set{k \in \Z^2_+ : \abs{k}_{\ell^\infty} \leq N}, \quad |k|_{\ell^\infty}  := \max\{|k_1|,|k_2|\},
\]
and now simply restrict the vorticity to the truncated lattice $w = (w_\ell) \in \C^{\Z^2_{+,N}}$, in which case \eqref{def:NSEFF} becomes an SDE, with the sum in the non-linearity \eqref{eq:B-def} now taken over all $j,k \in \Z^2_{+,N}$ such that $j+k=\ell\in \Z^2_{+,N}$. We regard the phase space as a real finite-dimensional manifold $\C^{\mathbb Z^2_{+,N}} \cong (\R^2)^{\mathbb Z^2_{+,N}}$
with the real and imaginary coordinates $\{(a_k,b_k)\}_{k\in \Z^2_{+,N}}$ defined by $w_k = a_k + i b_k$ giving also the corresponding basis $\set{\partial_{a_k},\partial_{b_k}}_{k \in \Z^2_{+,N}}$ for the tangent space $T_w\C^{\mathbb Z^2_{+,N}}$. 
One can easily check that this truncation satisfies all of the hypotheses assumed for \eqref{eq:xtclass}. 

\subsection{Main results}

We will assume a general condition on $\mathcal{K}$ that implies there exists a unique stationary measure for all $\eps> 0$ (c.f. \cite{E2001-lg,HM06,Romito2004-rc}). 
Denote the full truncated lattice by $\Z^2_{0,N} := \set{k \in \Z^2_0 : \abs{k}_{\ell^\infty} \leq N}$.  
\begin{assumption} \label{ass:hypo}
Define the sets $\mathcal{Z}^n \subset \Z^2_{0,N}$, 
\begin{align*}
\mathcal{Z}^0 & = \mathcal{K} \cup (-\mathcal{K}) \\ 	
\mathcal{Z}^n & = \set{k \in \Z^2_{0,N} :  k = k_1 + k', \; k_1 \in \mathcal{Z}^{n-1}, \; k' \in \mathcal{Z}^0, \; c_{k_1,k'} \neq 0 }. 
\end{align*}
We say $\mathcal{K}$ is \emph{hypoelliptic} if $\Z_{0,N}^2 = \bigcup \mathcal{Z}^n$. 
\end{assumption}
In \cite{E2001-lg} it was shown explicitly that for $r=1$ the sets $\mathcal{K} = \set{(0,1),(1,1)}$ and $\mathcal{K} = \set{(1,0),(1,1)}$ are both hypoelliptic for all $N$ (note also that if $\mathcal{K}$ is hypoelliptic, then so is any $\mathcal{K}'$ such that $\mathcal{K} \subseteq \mathcal{K}'$ ). 
In the limit $N \to \infty$ the set of hypoelliptic forcings is easier to characterize \cite{HM06}, however for a fixed $N$ we are unaware of a simple characterization of all hypoelliptic $\mathcal{K}$ due to the presence of the truncation.

The main theorem of this work is the following. 
\begin{theorem} \label{thm:mainchaos}
Consider the 2d Galerkin-Navier-Stokes equations with frequency truncation $N \geq 392$ on $\T^2_r$. 
Suppose that $\mathcal{K}$ is hypoelliptic. Then
\begin{align}
	\lim_{\eps \to 0} \frac{\lambda_1(\eps)}{\eps} = \infty, \label{eq:chaos}
\end{align}
and in particular $\forall N \geq 392$ and $\forall r > 0$,  $\exists \eps_0 > 0$ such that for all $\eps \in (0,\eps_0)$, $\lambda_1 > 0$.
\end{theorem}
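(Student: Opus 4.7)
The plan is to invoke the sufficient condition from the prior joint work with Blumenthal \cite{BBPS20}: for SDEs of the form \eqref{eq:xtclass}, if the Lie algebra generated by the noise directions together with iterated brackets against the drift spans, at $\mu$-a.e. point, the full tangent space of the sphere bundle $\R^d\times \mathbb{S}^{d-1}$ (``projective hypoellipticity''), then $\lim_{\eps\to 0}\lambda_1(\eps)/\eps=\infty$ and in particular $\lambda_1>0$ for all sufficiently small $\eps$. The first step is to transport this criterion, originally formulated for real-valued SDEs, into the complex-geometric language natural to the Galerkin system on $\C^{\Z^2_{+,N}}\cong \R^{2|\Z^2_{+,N}|}$, so that the noise vector fields $\partial_{a_k},\partial_{b_k}$ and the bilinear drift $B(w,w)$ can be manipulated directly in Fourier coordinates.

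With the criterion in place, I would verify projective hypoellipticity in two stages. In the base, Assumption \ref{ass:hypo} together with standard H\"ormander-type bracket computations already shows that iterated commutators of the noise fields with the drift realize each coordinate direction $\partial_{a_k},\partial_{b_k}$ for every $k\in \Z^2_{+,N}$. The non-trivial task is to control the lift to the sphere bundle. Here, following the strategy announced in the abstract, I would mimic the root space decomposition of a semi-simple Lie algebra: there is a natural ``Cartan-like'' diagonal sub-algebra (spanned by the rotations and dilations $-b_k\partial_{a_k}+a_k\partial_{b_k}$ and $a_k\partial_{a_k}+b_k\partial_{b_k}$), and the noise generators $\partial_{a_k},\partial_{b_k}$ act as weight vectors whose brackets against $B$ produce further weight vectors indexed by Fourier addition, with structure constants built out of $c_{j,k}=\langle j^\perp,k\rangle_r\bigl(|k|_r^{-2}-|j|_r^{-2}\bigr)$. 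Projective non-degeneracy then reduces to showing that the diagonal sub-algebra produced by such iterated brackets is generic enough to separate directions in the projective fibre at generic $w$.

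The final and hardest step is verifying this genericity. The diagonal sub-algebra is encoded by a finite list of polynomial combinations of structure constants $c_{j,k}$ along explicit triangles in $\Z^2$, and genericity amounts to the inconsistency of a finite family of polynomial systems in the Fourier indices and the aspect ratio $r$. The main obstacle is that these systems are rational in $r$ over $\Q$, involve many variables, and admit no obvious closed-form reduction; they must be handled by exact elimination. Concretely, I would clear denominators, set up each exceptional locus as an ideal in $\Q[r,\ldots]$, and run a Gr\"obner basis computation in Maple in exact rational arithmetic to check that each ideal is the unit ideal (so no nontrivial configuration exists) uniformly in $r>0$. The threshold $N\geq 392$ enters here as the smallest truncation that still contains every triangle of Fourier modes needed to realize the required brackets; in the formal $N\to\infty$ limit several size constraints disappear, consistent with the remark that the argument simplifies in infinite dimensions. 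Once these polynomial obstructions are ruled out, projective hypoellipticity holds for all $r>0$ and $N\geq 392$, and the BBPS20 criterion yields \eqref{eq:chaos}, hence $\lambda_1>0$ for $\eps$ sufficiently small.
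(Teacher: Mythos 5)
Your high-level architecture is the same as the paper's (reduce to the \cite{BBPS20} criterion, pass to complex coordinates, reduce projective hypoellipticity to a matrix Lie algebra generated by constant matrices coming from brackets of the noise with the drift, then verify a genericity statement by exact Gr\"obner-basis computations), but the crucial middle step is misidentified in a way that would make the argument fail. The diagonal ``Cartan-like'' subalgebra is \emph{not} the span of the per-mode rotations and dilations $-b_k\partial_{a_k}+a_k\partial_{b_k}$, $a_k\partial_{a_k}+b_k\partial_{b_k}$: nothing in the bracket structure places these vector fields in $\mathfrak{m}_w$, and even if they were there, their eigenvalues on elementary matrices are small integers with massive degeneracies, so the distinct-differences/Vandermonde mechanism that isolates matrix units would collapse. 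What the paper actually uses is $\mathfrak{h}=\Span\{[H^k,H^{-k}]\}$, where $H^k=\nabla[\partial_{w_k},X_0^\ep]$ is the constant matrix \eqref{eq:Hk-Def}; the diagonal entries $\mathbb{D}^k_i$ are explicit rational functions of $i,k,r$, and the entire computational effort (Proposition \ref{prop:main-distinctness-prop}) is to show that some linear combination of the $\mathbb{D}^k$ is ``distinct.'' Moreover, full distinctness is provably impossible because of the inversion symmetry $\mathbb{D}_{-i}=-\mathbb{D}_i$: one can only isolate the two-dimensional invariant subspaces spanned by the pairs $E^{i,j}$, $E^{-j,-i}$, i.e.\ the matrices $M^{i,j}$ of \eqref{eq:M-matrix}, and one then needs a further, separate argument (Proposition \ref{prop-Mij-generate}, resting on the nonvanishing of the determinants $d^{k,k'}_{i,j}$ proved by its own Gr\"obner computation, Proposition \ref{prop:algebraic-geo-proof-1}) to show that the $M^{i,j}$ generate $\mathfrak{sl}_{\Z^2_{0,N}}(\R)$, whence Corollary \ref{cor:disDon}. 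Your sketch is silent on both the symmetry obstruction and this second polynomial-inconsistency step.

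Two further points. The threshold $N\geq 392$ does not come from ``containing every triangle of Fourier modes'': the brackets needed for spanning are available for much smaller $N$. It comes from the Galerkin truncation making $\mathbb{D}^k_i$ a \emph{piecewise} rational function of $k$ (four possible algebraic forms depending on whether $i\pm k$ survive the cutoff). To run the ideal-theoretic argument one needs a full box of $k$'s of side $2a+1=21$ (the numerator has degree $19$ in $k$, so Lemma \ref{lem:Reducek} requires $2a>19$) on which all four of $i,j,\ell,m$ see a consistent algebraic form; the inclusion--exclusion argument of Lemma \ref{lem:key-lemma} over nine shifted boxes produces such a box precisely when $N>4(9a+8)=392$. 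Finally, the \cite{BBPS20} criterion requires the \emph{uniform} parabolic H\"ormander condition on $\S M$ (uniformly in $\ep$ and locally uniformly in the point), not merely spanning at $\mu$-a.e.\ point as you state; in this problem uniformity is automatic because the $H^k$ are constant matrices, but the criterion you invoke should be stated in that stronger form for the Fisher-information machinery (Theorem \ref{thm:FIlow}) to apply.
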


Before we make remarks, let us provide an outline of the remainder of the paper.
In Section \ref{sec:Hypo} we recall the definition of \emph{projective hypoellipticity} which corresponds to H\"ormander's condition for the Markov process $(x_t)$ lifted 
to the sphere bundle in a suitable manner, and we recall our results with Alex Blumenthal \cite{BBPS20} which (A) provide a useful sufficient condition for projective hypoellipticity in terms of a matrix Lie algebra based only on the nonlinearity (Proposition \ref{prop:class-proj-span}) and (B) show that projective hypoellipticity implies Theorem \ref{thm:mainchaos} (see Section \ref{sec:GNSEproSpan}). 
In Section \ref{sec:ProjComp}, we reformulate the sufficient condition for projective hypoellipticity to be more suitable for \eqref{def:NSEFF} (Proposition \ref{prop:sufficient-projectspan-NS}).  
The remainder of the paper is dedicated to proving this sufficient condition. 
Section \ref{sec:diagonal} introduces a diagonal sub-algebra $\mathfrak{h}$ and shows that a certain genericity property of $\mathfrak{h}$ implies projective hypoellipticity (Corollary \ref{cor:disDon}).
Section \ref{sec:distinct} proves this genericity property (Proposition \ref{prop:main-distinctness-prop}). 
Section \ref{sec:distinct} also contains a more detailed summary of the proof of Theorem \ref{thm:mainchaos}  which puts together all of the pieces.  
Sections \ref{sec:diagonal} and \ref{sec:distinct} both use computational algebraic geometry and computer assisted proofs performed with Maple \cite{maple} to compute Gr\"obner bases for certain polynomial ideals  (although the arguments used in Section \ref{sec:distinct} are significantly more complicated).
A review of the algebraic geometry required is included for the readers' convenience in Appendix \ref{sec:Groebner} and the computer code is included in \ref{sec:code}. 
Appendix \ref{sec:lattice} contains a simple but crucial technical lemma regarding polynomial ideals.

\subsection{Remarks}

\begin{remark}
We did not attempt to optimize the proof to try and reduce the value of $N$ and we believe that the result holds for much smaller $N$ as well. 
However, $N \geq 392$ is already enough to treat nearly all modern numerical simulations of the 2d Navier-Stokes equations on $\mathbb T^2_r$.  
\end{remark}

\begin{remark}
As might be expected, we currently do not have any quantitative estimates on $\eps_0$ and $\lambda_1$ in terms of $N$ at this time. 
\end{remark}

\begin{remark}
The quantitative estimate of $\lambda_1$ in terms of $\eps$ is almost certainly sub-optimal 
\end{remark}

\begin{remark} \label{rmk:rescale}
If one starts with the scaling 
\begin{align}
	\dee x_t = \left( B(x_t,x_t) - \hat{\eps} Ax_t \right)\,\dt + \sum_{k=1}^r e_k\, \dee W^k_t \, , \label{eq:xtclasslame}
\end{align}
we can relate the stochastic flow of diffeomorphisms $\hat \Phi^t_{\hat\omega}$ solving the SDE \eqref{eq:xtclasslame}
with the stochastic flow $\Phi^t_\omega$ solving \eqref{eq:xtclass} by 
$\Phi^t_\omega(u) = \sqrt{\hat{\epsilon}} \widehat{\Phi}^{\sqrt{\hat{\epsilon}} t}_{\hat{\omega}} (u/\sqrt{\hat\ep})$ (where $\omega_t = \hat\ep^{-1/4}\hat{\omega}_{\sqrt{\hat\ep}t}$ a Brownian self-similar rescaling of the noise path $\hat\omega$ so equality of the two flows is interpreted as \emph{equality in probabilistic law}).  
Thus, the Lyapunov exponent $\hat \lambda_1^{\hat \epsilon}$ of the stochastic flow $\hat \Phi^t_\omega$ satisfies $\hat \epsilon^{-1} \hat \lambda_1^{\hat \epsilon} = \epsilon^{-1} \lambda_1^\epsilon$, and in particular $\hat{\lambda}^{\hat \epsilon }_1 > 0$ if and only if $\lambda_1^{\epsilon } > 0$. 
\end{remark}

\begin{remark}
We believe our methods should extend in an analogous way to Galerkin truncations of other PDE with polynomial nonlinearities, for example the 3D Navier-Stokes equations, as well as more general truncations like the Fourier decimation models in e.g. \cite{FrischEtAl2012}.
\end{remark}

\begin{remark}
Another important truncation of the Euler non-linearity is the Zeitlin model (see \cite{Zeitlin1991-qf,Zeitlin1991-gk,Gallagher2002-sp}), which has the added benefit that it preserves the Poisson structure of the Euler equations for the co-adjoint orbits on $\text{SDiff}(\T^2)$ (see \cite{Arnold1998-hz}). In this approximation, instead of a sharp truncation in frequency, the Fourier modes are taken to belong to the periodic lattice $\Z^2_{0,\text{mod }N} := \Z^2_0\backslash N \Z^2_0$ for some $N\geq 1$ and the non-linearity is given by
\[
	B_\ell(w,w) = \frac{1}{2}\sum_{\ell= j + k \text{ mod }N} \sin\left(\frac{2\pi \langle j^\perp,k\rangle}{N}\right)\left(\frac{1}{|k|^2} - \frac{1}{|j|^2}\right)w_jw_k.
\]
While we expect that our results should still hold for this model, it is important to note that our methods do not currently apply to this truncation since the multiplier $\sin(2\pi \langle j^\perp,k\rangle/N )$ is not a polynomial in the lattice variables $j,k$ and therefore cannot be easily treated by our algebraic geometry methods.
\end{remark}

\begin{remark}
The use of computational algebraic geometry methods to deduce generating properties of matrix Lie algebras is not an entirely new idea. For instance, computations using polynomial ideals and Gr\"obner bases feature in \cite{Elliott2009} as a practical tool for deducing transitivity on $\R^n$ for certain matrix algebras related to bilinear control systems. However, in contrast with our work, the techniques used in \cite{Elliott2009} depend very strongly on the dimension and do not generalize to infinite or arbitrary dimensional systems like ours.
\end{remark}

\begin{remark}
Our computer assisted proof uses Maple's implementation of the F4 algorithm \cite{Faugere1999-sj} to compute the reduced Gr\"obner basis (see \cite{Cox} or Appendix \ref{sec:Groebner}) of certain polynomial ideals associated with the coefficient $c_{j,k}$. Computing Gr\"obner bases, particularly for ideals generated by high degree polynomials with many variables, can be notoriously costly and can be very sensitive to the choice of variable ordering and associated monomial ordering. It is important to remark that the set up of several of the computations included in Appendix \ref{sec:code} are incredibly delicate, and often fail to converge if some of the constraints aren't included, the variable ordering isn't chosen correctly, or if the choice of saturating polynomial isn't written in a certain way.
Indeed, the principle part of the calculation takes places on polynomials of degree 19 in 11 independent variables, which is a far too high dimensional space in which to do arbitrary computations, even for modern supercomputers. 
\end{remark}

\begin{remark}
A remarkable feature of our proof is that it holds for {\em all} $N$ large enough and for {\em all} torus aspect ratios $r>0$. Such a conclusion is simply not possible using more direct methods. Specifically, for a given fixed $N$ and fixed $r > 0$, one can of course attempt to check the matrix algebra generating properties exhaustively using a more direct method, however computing this very quickly becomes extremely expensive for even fairly modest $N \geq 10$ and can only be done using exact rational arithmetic if $r$ is a rational number.
\end{remark}

\subsection{Acknowledgements}
We would like to give a special mention to Alex Blumenthal for many fruitful discussions and whose work in \cite{BBPS20} laid the foundation for this one.

\section{Preliminaries: projective hypoellipticity and chaos} \label{sec:Hypo}

In this section we review the concepts of hypoellipticity for Markov semigroups, the projective process and its hypoellipticity, and the main results of \cite{BBPS20} connecting projective hypoellipticity to chaos as well as some convenient characterizations of projective hypoellipticity.

\subsection{Hypoellipticity} \label{sec:generalCondProjSpan}
In this section we briefly recall the notion of hypoellipticity and its relevance to SDEs. 

In what follows, let $(M,g)$ be a smooth, connected, complete Riemannian manifold without boundary and $\mathfrak{X}(M)$, the space of smooth vector fields over $M$. 
Let $[X,Y]$ be the Lie bracket of two vector fields $X,Y$,
defined for each $f\in C^\infty(M)$ by
\[
	[X,Y](f) = XY(f) - YX(f)
\]
where $X(f)$ denotes the directional derivative of $f$ in the direction $X$. This bracket turns $\mathfrak{X}(M)$ into an infinite dimensional Lie algebra. 
Denote the adjoint action $\mathrm{ad}(X):\mathfrak{X}(M)\to \mathfrak{X}(M)$ by $\mathrm{ad}(X)Y = [X,Y]$. The next condition, introduced in \cite{H67}, is crucial to our study. 
\begin{definition}[H\"ormander's condition] \label{def:Hormander}
For a given collection $\mathcal{F} \subseteq \mathfrak{X}(M)$ define the {\em Lie algebra generated by $\mathcal{F}$} by
\begin{equation}\label{eq:lie-def}
	\mathrm{Lie}(\mathcal{F}) := \Span\{\mathrm{Lie}^m(\mathcal{F})\,:\, m\geq 1\},
\end{equation}
where 
\[
	\mathrm{Lie}^m(\mathcal{F}) := \Span\{\mathrm{ad}(X_r)\ldots\mathrm{ad}(X_{2})X_1\,:\, X_i \in \mathcal{F}\,,\, 1\leq r\leq m\}.
\]
We say that a collection of smooth vector fields $\mathcal{F}\subseteq \mathfrak{X}(M)$ satisfies {\em H\"{o}rmander's condition} on $M$ if for each $x\in M$ we have the following spanning property
\[
	\mathrm{Lie}_x(\mathcal{F}):=\{ X(x)\,:\, X\in \mathrm{Lie}(\mathcal{F})\} = T_xM.
\]
\end{definition}

It is also useful to define a notion of (locally) uniform spanning properties of a collection of vector fields.

\begin{definition}[Uniform H\"ormander] \label{def:UniHormander}
	Let $\mathcal{F}^\eps \subset \mathfrak{X}(M)$ be a set of vector fields parameterized by $\epsilon \in (0,1]$. We say $\mathcal{F}^\ep$ satisfies the uniform H\"ormander condition on $M$ if $\exists m \in \mathbb N$, such that for any open, bounded set $U \subseteq \cM$ there exists constants $\set{K_n}_{n=0}^\infty$, such that for all $\epsilon \in (0,1]$ and all $x \in U$, there is a finite subset $\mathcal{V}_x \subset \mathrm{Lie}^m(\mathcal{F}^\ep)$ such that $\forall \xi \in \R^d$
	\begin{align*}
		\abs{\xi} \leq K_0 \sum_{X \in \mathcal{V}_x} \abs{X(x) \cdot \xi} \qquad \sum_{X \in\mathcal{V}_x}\norm{X}_{C^n(U)} \leq K_n. 
	\end{align*}
\end{definition}

An important role will also be played by a certain Lie algebra ideal which is better suited to hypoellipticity for parabolic equations and Markov semigroups.
\begin{definition}[Parabolic H\"ormander's condition] \label{def:PHormander}
Let $X_0\in \mathfrak{X}(M)$ be a distinguished ``drift'' vector field and let $\mathcal{X} \subseteq \mathfrak{X}(M)$ be of a collection of ``noise'' vector fields. We define the {\em zero-time ideal} generated by $X_0$ and $\mathcal{X}$ as the Lie algebra generated by the sets $\mathcal{X}$ and $[\mathcal{X},X_0] := \{[X,X_0]\,:\, X\in \mathcal{X}\}$, which we denote by
\[
	\mathrm{Lie}(X_0;\mathcal{X}) := \mathrm{Lie}(\mathcal{X},[\mathcal{X},X_0]).
\]
Correspondingly we say that the vector fields $X_0,\mathcal{X}$ satisfy the {\em parabolic H\"ormander condition} on $M$ if the vector fields $\mathrm{Lie}_x(X_0;\mathcal{X}) = T_xM$. Likewise we say $X_0$,$\mathcal{X}$ satisfies {\em the uniform parabolic H\"ormander condition} if $\{\mathcal{X},[\mathcal{X},X_0]\}$ satisfies the uniform H\"ormander condition as in Definition \ref{def:UniHormander}. 
\end{definition}

\begin{remark}
The terminology `zero-time ideal' comes from geometric control theory (see e.g. \cite{JurdGCT}) where $\mathrm{Lie}(X_0;\mathcal{X})$ plays an important role in obtaining exact controllability of affine control systems. A proof that the definition of the zero-time ideal in geometric control theory and $\mathrm{Lie}(X_0;\mathcal{X})$ coincide can be found in [Proposition 5.10, \cite{BBPS20}], although this fact is likely well-known to experts in geometric control theory (see e.g. discussion in \cite{Elliott2009} Chapter 3). 
\end{remark}

Consider a stochastic process $x_t \in M, t \geq 0$ defined by the (Stratonovich) SDE 
\begin{equation}\label{eq:general-SDE}
	\dee x_t = X_0(x_t)\,\dt + \sum_{k=1}^r X_k(x_t)\strat\dee W^k_t \, , 
\end{equation}
for vector fields $X_k \in \mathfrak{X}(M)$. 
Define the Markov kernel for any set $O \subset M$ and $x\in M$, $P_t(x,O) = \PP(x_t \in O \,|\, x_0 = x)$ and define the Markov semigroup on 
\begin{align*}
\mathcal{P}_t \varphi(x) := \EE\left(\varphi(x_t) \,|\, x_0 = x \right) = \int_M \varphi(y) P_t(x, \dy)
\end{align*}
where $\varphi : M \to \R$ is bounded and measurable. We also define the adjoint semigroup on probability measures $\mathcal{P}(M)$ for each Borel $A\subset M$ and $\mu\in \mathcal{P}(M)$
\[
	\mathcal{P}_t^*\mu(A) := \int_M P_t(y,A)\mu(\dy).
\]
Under fairly mild conditions on the vector fields $\set{X_k}_{k=0}^r$, these Markov semigroups are well defined and solve deterministic PDEs \cite{kunita1997stochastic, arnold1995random}. 
Recall the definition of stationary measure for an SDE. 
\begin{definition}
A measure $\mu \in \mathcal{P}(M)$ is called \emph{stationary} for a given SDE if $\mathcal{P}_t^\ast \mu = \mu$. 
\end{definition}
H\"ormander's theorem implies that if $X_0,\set{X_1,...,X_r}$ satisfies the parabolic H\"ormander condition, then $\mathcal{P}_t : L^\infty \to C^\infty$ (see e.g. \cite{H67,H11}). 
This implies that any stationary measure $\mu$ is absolutely continuous with respect to Lebesgue measure with a smooth density. 
By the Doob-Khasminskii theorem, this together with topological irreducibility\footnote{It suffices to show that for $t > 0$, $x \in M$, $O \subset M$ open, then $P_t(x,O) > 0$, however if only uniqueness is desired, one can get by with much less.} implies the uniqueness of stationary measures.  

\subsection{Projective Hypoellipticity}
It is well-known that many dynamical properties of the general SDE \eqref{eq:general-SDE} are encoded in the process
\[
 z_t = (x_t,v_t) := \left(\Phi^t(x),\frac{D_x\Phi^t v }{\abs{D_x\Phi^t v }}\right). 
\]
The process $(z_t)$ takes values on the unit tangent bundle $\S M$ defined by the fibers $\S_x M = \S^{n-1} (T_x M)$ and is called the \emph{projective process} (as one can just as well consider the process on the projective bundle $PM$).  
One can show that $z_t$ solves the lifted version of \eqref{eq:general-SDE} on $\S M$
\begin{align*}
	\dee z_t = \widetilde{X}_0(z_t)\,\dt + \sum_{k=1}^r \widetilde{X}_k(z_t) \circ \dee W_t^{k}. 
\end{align*}
Here, for a smooth vector field $X$ on $M$, define the ``lifted'' vector field $\widetilde{X}$ on $\S M$ by
\[
	\widetilde{X}(x,v)  := (X(x) , V_{\nabla X}(x,v)),
\]
where each of the components in the block vector above is determined via the orthogonal splitting $T_{(x,v)} \S M = T_xM \oplus T_{v} \S_x M$ into horizontal and vertical components induced by the Levi-Civita connection $\nabla$ on $M$ and the associated Sasaki metric\footnote{The Sasaki metric (see \cite{Sasaki1962-rt}) is the unique metric on $\S M$ induced from $g$ such that the splitting $T_{(x,v)} \S M = T_xM \oplus T_{v} \S_x M$ induced by the Levi-Civita connection is orthogonal.} $\tilde{g}$ on $\S M$ . The ``vertical'' component $V_{\nabla X}$ will be referred to as the {\em projective vector field} and is defined explicitly by
\[
V_{\nabla X}(x,v) := \nabla X(x)v - \langle v,\nabla X(x)v\rangle_{x}v,
\]
where $\nabla X(x)$ denotes the total covariant derivative of $X$, viewed as a linear endomorphism on $T_xM$. 
That there should be a connection between the hypoellipticity of the projective process and the Lyapunov exponents is well-documented (see e.g. \cite{pinsky1988lyapunov,baxendale1989lyapunov,dolgopyat2004sample}). 
Indeed, the sufficient condition proved in \cite{BBPS20} for \eqref{eq:chaos} in systems of the form \eqref{eq:xtclass} is the requirement of uniform hypoellipticity of the $(z_t)$ process, i.e. \emph{projective hypoellipticity}, which we explain next. 

Here we recall necessary and sufficient conditions on a collection of vector fields $\mathcal{F}\subseteq \mathfrak{X}(M)$ so that their lifts $\widetilde{\mathcal{F}} = \{\widetilde{X}\,:\, X\in \mathcal{F}\} \subseteq \mathfrak{X}(\S M)$ satisfy the H\"ormander condition on $\S M$. Since the vector fields $\mathcal{F}$ may not be volume preserving, it is convenient to define for each $X\in \mathfrak{X}(M)$ and $x\in M$ the following traceless linear operator on $T_xM$ : 
\[
	M_X(x) := \nabla X(x) - \tfrac{1}{n}\Div X(x) \Id \, , 
\]
which we view as an element of the Lie algebra $\mathfrak{sl}(T_xM)$ of linear endomorphisms $A$ with $\tr(A) =0$ and Lie bracket given by the commutator $[A,B] = AB-BA$. Since the projective vector field $V_{\nabla X}(v)$ includes a projection orthogonal to $v$, we always have $V_{\nabla X} = V_{M_X}$. For each $x\in M$, an important role will be played by the following Lie sub-algebra of $\mathfrak{sl}(T_xM)$ 
\begin{equation}\label{eq:m-lie-alg-def}
	\mathfrak{m}_x(\mathcal{F}) := \{M_X(x) \,:\, X\in \mathrm{Lie}(\mathcal{F})\,,\, X(x) =0\}.
\end{equation} 
Note that $\mathfrak{m}_x(\mathcal{F})$ is independent of any choice of coordinates (and is in fact independent of the choice of metric). One can further check that $\mathfrak{m}_x(\mathcal{F})$ is indeed a Lie sub-algebra of $\mathfrak{sl}(T_xM)$.

The spanning properties of the lifted vector fields $\widetilde{\mathcal{F}}$ on $\S M$ can be related to properties of the Lie algebra $\mathfrak{m}_x(\mathcal{F})$. An important role is played by the non-trivial fact that the lifting map $X\mapsto \widetilde{X}$ satisfies the identity
\[
	[\widetilde{X},\widetilde{Y}] = [X,Y]{\,\,}^{\widetilde{}},
\] 
and therefore is a Lie algebra isomorphism\footnote{This was observed in \cite{baxendale1989lyapunov}, but see e.g.  [Lemma C.2 \cite{BBPS20}] for a complete proof.} onto the set of lifts 
\[
\widetilde{\mathfrak{X}}(M) := \{\widetilde{X}\,:\, X\in \mathfrak{X}(M)\}.
\]
The associated implications for projective hypoellipticity of the lifts are conveniently recorded in the following from \cite{BBPS20}. 
\begin{proposition}[Proposition 2.7 in \cite{BBPS20}]\label{prop:class-proj-span}
Let $\mathcal{F}\subseteq \mathfrak{X}(M)$ be a collection of smooth vector fields on $M$. Their lifts $\widetilde{\mathcal{F}}\subseteq \widetilde{\mathfrak{X}}(M)$ satisfy the H\"ormander condition on $\S M$ if and only if $\mathcal{F}$ satisfies the H\"ormander condition on $M$ and for each $x\in M$, $\mathfrak{m}_x(\mathcal{F})$ acts transitively on $\S_xM$ in the sense that for each $(x,v)\in \S M$, one has
\[
\{V_A(x) \,:\, A \in \mathfrak{m}_x(\mathcal{F})\} = T_v\S_xM.
\]
In particular this implies that $\widetilde{\mathcal{F}}\subseteq \widetilde{\mathfrak{X}}(M)$ satisfies H\"ormander's condition if for each $x\in M$
\[
\mathrm{Lie}_x(\mathcal{F})= T_xM,  \quad\text{and}\quad \mathfrak{m}_x(\mathcal{F}) = \mathfrak{sl}(T_x M).
\]
\end{proposition}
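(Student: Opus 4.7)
The plan is to exploit the key fact recalled just before the statement that the lifting $X\mapsto \widetilde{X}$ is a Lie algebra isomorphism. Consequently $\mathrm{Lie}(\widetilde{\mathcal{F}}) = \{\widetilde{X} : X\in \mathrm{Lie}(\mathcal{F})\}$, and to analyze spanning on $\S M$ it suffices to understand how a vector field $X\in\mathrm{Lie}(\mathcal{F})$ lifts to a vector at a fixed point $(x,v)\in\S M$. I would work throughout in the orthogonal splitting $T_{(x,v)}\S M = T_xM \oplus T_v\S_xM$ induced by the Sasaki metric, which is compatible with the formula $\widetilde{X}(x,v) = (X(x),V_{\nabla X}(x,v))$. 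Both directions of the equivalence will then be read off by computing the horizontal and vertical projections of $\mathrm{Lie}_{(x,v)}(\widetilde{\mathcal{F}})$ separately.

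For the horizontal direction, the projection of $\widetilde{X}(x,v)$ onto $T_xM$ is simply $X(x)$. Hence the horizontal projection of $\mathrm{Lie}_{(x,v)}(\widetilde{\mathcal{F}})$ equals $\mathrm{Lie}_x(\mathcal{F})$. Spanning in $T_{(x,v)}\S M$ therefore forces H\"ormander's condition for $\mathcal{F}$ on $M$; conversely if $\mathrm{Lie}_x(\mathcal{F})=T_xM$, we can always reach any prescribed horizontal component by a suitable combination of lifts.

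For the vertical direction, the key observation is that the kernel of the horizontal projection inside $\mathrm{Lie}_{(x,v)}(\widetilde{\mathcal{F}})$ consists precisely of vectors of the form $\sum_j c_j\widetilde{X_j}(x,v)$ with $X_j\in\mathrm{Lie}(\mathcal{F})$ satisfying $\sum_j c_j X_j(x)=0$. Setting $Z:=\sum_j c_j X_j$, linearity of $\mathrm{Lie}(\mathcal{F})$ gives $Z\in\mathrm{Lie}(\mathcal{F})$ with $Z(x)=0$, and the vertical part becomes $V_{\nabla Z}(x,v)=V_{M_Z(x)}(v)$ after invoking the identity $V_{\nabla X}=V_{M_X}$ (which holds because $V_A(v)=Av-\langle v,Av\rangle v$ is insensitive to adding multiples of the identity). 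Thus the vertical part of $\mathrm{Lie}_{(x,v)}(\widetilde{\mathcal{F}})$ is exactly $\{V_A(v) : A\in\mathfrak{m}_x(\mathcal{F})\}$, and combining with the horizontal analysis establishes the iff.

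For the final sufficient condition, it remains to show that $\mathfrak{m}_x(\mathcal{F})=\mathfrak{sl}(T_xM)$ implies transitivity on $\S_xM$. Given any $v\in\S_xM$ and $w\in T_v\S_xM = v^{\perp}$, define the rank-one endomorphism $A:T_xM\to T_xM$ by $Au=\langle v,u\rangle w$. Then $\tr A = \langle v,w\rangle = 0$, so $A\in\mathfrak{sl}(T_xM)$, and $V_A(v)=Av - \langle v,Av\rangle v = w - 0 = w$, which produces the desired vertical vector. The only mildly subtle point in the whole argument is the vector-space-closure step used to identify the kernel of the horizontal projection with the vertical contribution from $\mathfrak{m}_x(\mathcal{F})$; everything else is a clean bookkeeping exercise in the Sasaki splitting.
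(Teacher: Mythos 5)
Your argument is correct and follows essentially the same route as the proof of Proposition 2.7 in \cite{BBPS20} that this paper cites: it rests on the lifting isomorphism $[\widetilde{X},\widetilde{Y}]=[X,Y]\,\widetilde{}\,$ and the Sasaki splitting $T_{(x,v)}\S M = T_xM\oplus T_v\S_xM$, identifies the kernel of the horizontal projection with $\{V_A(v):A\in\mathfrak{m}_x(\mathcal{F})\}$ via $V_{\nabla Z}=V_{M_Z}$, and uses rank-one traceless matrices for the final transitivity claim. No gaps; this matches the intended proof.
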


\begin{remark}\label{rem:so-remark}
In general one should expect that ``generically'' $\mathfrak{m}_x(\mathcal{F}) = \mathfrak{sl}(T_xM)$ holds true. Indeed, it well-known in the control theory literature (see \cite{Boothby1979-rs}) that there is an open and dense set of $\mathfrak{sl}_n(\R)$ such that any two matrices $A,B$ in that set generate $\mathfrak{sl}_n(\R)$.
\end{remark}

\subsection{Chaos and Fisher Information} \label{sec:FI}

In this section we briefly recall some of the main results of \cite{BBPS20} for the readers' convenience. 
For this we have to define the \emph{sum Lyapunov exponent}, which describes the asymptotic exponential rate of the volume compression/expansion: 
\begin{align*}
\lambda_{\Sigma} = \lim_{t \to \infty} \frac{1}{t} \log\,\mathrm{det} (D_x\Phi^t_\omega).
\end{align*}
With some additional mild integrability (see \cite{kifer1988note,BBPS20} for discussions) the Kingman subadditive ergodic theorem \cite{kingman1973subadditive,kifer2012ergodic} implies that a unique stationary measure leads to uniquely defined $\lambda_1,\lambda_\Sigma$ attained for $\mu \times \PP$ a.e. $(x,\omega)$. 

For general SDE of the form \eqref{eq:general-SDE}, with Alex Blumenthal, we provided the following identity connecting a degenerate Fisher information-type quantity with the Lyapunov exponent.
\begin{proposition}[Proposition 3.2, \cite{BBPS20}] \label{prop:FI}
Assume that the SDE \eqref{eq:general-SDE} defines a global-in-time stochastic flow of $C^1$ diffeomorphisms and that the associated projective process $(z_t)$ has a unique stationary measure $\nu$ which is absolutely continuous with respect to the volume measure $\dq$ on $\S M$ with smooth density $f$ and which satisfies some additional mild decay and integrability estimates (see \cite{BBPS20} for details). 
Then,  
\begin{align*}
FI(f) := \sum_{k=1}^r \int_{\S M} \frac{ \abs{X_k^\ast f}^2 }{f} \,\dee q = n \lambda_1 - 2 \lambda_{\Sigma},  
\end{align*}
where $n$ is the dimension of $M$ and $\dee q$ the Riemannian volume measure on $\S M$, and $X_k^*$ denotes the formal adjoint of $X_k$ as a differential operator with respect to $L^2(\dee q)$.
\end{proposition}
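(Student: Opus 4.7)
The plan is to combine two ingredients: Furstenberg--Khasminskii-type representations of $\lambda_1$ and $\lambda_\Sigma$ as integrals against $\nu = f \dq$, and the stationary Fokker--Planck equation $\cL^* f = 0$ tested against $\log f$, where $\cL = \widetilde{X}_0 + \tfrac{1}{2}\sum_k \widetilde{X}_k^2$ is the generator of the projective process on $\S M$.

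First I would apply It\^o's formula to $\log\abs{D_x\Phi^t v}$ and $\log \det(D_x\Phi^t)$ along \eqref{eq:general-SDE}, divide by $t$, and pass $t \to \infty$ using Birkhoff's theorem (applicable because $\nu$ is the unique stationary measure of the projective process). This yields representations
\[
\lambda_1 = \int_{\S M} \Bigl[ \brak{v,\nabla X_0(x) v} + \tfrac{1}{2}\sum_{k=1}^r Q_k(x,v)\Bigr] f(x,v)\, \dq,
\]
\[
\lambda_\Sigma = \int_{\S M}\Bigl[ \Div X_0(x) + \tfrac{1}{2}\sum_{k=1}^r X_k(\Div X_k)(x)\Bigr] f(x,v)\, \dq,
\]
where each $Q_k$ is the Stratonovich--It\^o correction produced by the vertical component $V_{M_{X_k}}$ of the lifted field $\widetilde{X}_k$, and the $X_k(\Div X_k)$ term in $\lambda_\Sigma$ is the analogous correction for $\log\det(D_x\Phi^t)$.

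Next I would expand the identity $\int_{\S M} f \cL(\log f)\, \dq = 0$ coming from $\cL^* f = 0$. Writing $\widetilde X_k^2(\log f) = \widetilde X_k^2 f/f - (\widetilde X_k f)^2/f^2$ and integrating by parts on $\S M$ with respect to the Sasaki volume $\dq$, the nonnegative gradient-squared terms rearrange---via the adjoint identity $X_k^* = -X_k - \Div(X_k)$ on $(\S M, \dq)$---into $\sum_{k=1}^r \int_{\S M}\abs{X_k^* f}^2/f\, \dq = FI(f)$. The remaining drift-type pieces collect $\Div_{\S M}(\widetilde X_0)$ and, after a second integration by parts of the diffusion terms, the precise combinations $Q_k$ and $X_k(\Div X_k)$ appearing in the Furstenberg--Khasminskii formulae. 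The essential algebraic identity that makes everything fit is the Sasaki divergence decomposition
\[
\Div_{\S M}(\widetilde X)(x,v) = \Div_M(X)(x) - n\, \brak{v, M_X(x)\, v},
\]
obtained from the vertical divergence of $V_{M_X}$ on $\S_xM$ together with $\tr(M_X) = 0$. This identity produces the factor $n$ coupling $\brak{v, M_{X_0} v}$ to $\Div X_0$ (hence the coefficient of $\lambda_1$), while the factor $2$ in $2\lambda_\Sigma$ emerges from one copy of $\Div X_0$ originating in the $\widetilde X_0$ drift (matching $n\lambda_1 - \lambda_\Sigma$) and a second copy generated by the symmetric structure $\tfrac{1}{2}\sum_k\widetilde X_k^2$ via the Stratonovich correction $X_k(\Div X_k)$.

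The main technical obstacle will be the careful bookkeeping of the Sasaki geometry on $\S M$: tracking horizontal/vertical components of $\widetilde X_k$, verifying that the vertical integration-by-parts pieces match the Stratonovich corrections $Q_k$ term-by-term, and correctly interpreting $X_k^*$ on $\S M$ versus $M$ (in the concrete setting of \eqref{eq:xtclass} the noise vector fields are constant so $\Div X_k = 0$ and $X_k^* = -X_k$, but the general Riemannian statement requires care). One must additionally verify that the stated smoothness of $f$ (from projective hypoellipticity) and its decay/integrability (from the Gaussian bounds on the stationary measure of \eqref{eq:xtclass}) are sufficient to justify every integration by parts on $\S M$ and every exchange of limits underlying the ergodic-theorem step.
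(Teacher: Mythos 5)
Your overall strategy is sound, and it is in fact the strategy of the source: this paper does not prove Proposition \ref{prop:FI} at all (it is quoted from \cite{BBPS20}), and the argument there is exactly of the type you describe --- Furstenberg--Khasminskii representations of $\lambda_1,\lambda_\Sigma$ combined with the stationary Kolmogorov equation for the projective process tested against $\log f$, integrated by parts on $\S M$ using the vertical divergence identity for projective vector fields. Your stated Sasaki identity $\Div_{\S M}(\widetilde X) = \Div_M X - n\brak{v, M_X v}$ is correct (equivalently $2\Div_M X - n\brak{v,\nabla X\, v}$), and the integrability/decay caveats you list are the right ones.

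However, your bookkeeping of where the constants come from --- which is the entire content of an exact identity --- is wrong at one concrete point. You claim the factor $2$ in $2\lambda_\Sigma$ arises from ``one copy of $\Div X_0$ from the $\widetilde X_0$ drift'' plus ``a second copy generated by $\tfrac12\sum_k \widetilde X_k^2$ via the Stratonovich correction $X_k(\Div X_k)$.'' In the setting this paper actually needs, \eqref{eq:xtclass}, the noise fields are constant, so $X_k(\Div X_k)\equiv 0$ and $\nabla X_k = 0$; by your accounting the identity would then read $n\lambda_1-\lambda_\Sigma$, which is false. A direct computation in the additive case shows both copies of $\lambda_\Sigma$ come from the lifted \emph{drift} alone: $-\int f\,\Div_{\S M}\widetilde X_0\,\dee q = -\int f\bigl(2\Div X_0 - n\brak{v,\nabla X_0 v}\bigr)\dee q = n\lambda_1 - 2\lambda_\Sigma$, the second copy of $\Div X_0$ being exactly the trace part separating $\brak{v,\nabla X_0 v}$ (the Furstenberg--Khasminskii integrand for $\lambda_1$) from $\brak{v,M_{X_0}v}$ appearing in your divergence identity. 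Relatedly, with the Stratonovich generator $\widetilde X_0 + \tfrac12\sum_k\widetilde X_k^2$, testing stationarity against $\log f$ produces $\tfrac12\sum_k\int \abs{\widetilde X_k^* f}^2/f\,\dee q$, not the unweighted sum you assert; you must reconcile this factor with the normalization of $FI$ in \cite{BBPS20} rather than let it ``rearrange'' silently, and in the general (non-additive) case the extra terms $\int(\widetilde X_k f)\Div\widetilde X_k\,\dee q$, $\int(\Div\widetilde X_k)^2 f\,\dee q$ produced when converting $\widetilde X_k f$ into $\widetilde X_k^* f$ are precisely what must cancel against the It\^o/Stratonovich corrections $Q_k$ and $X_k(\Div X_k)$ in the exponent formulas --- that cancellation is the part of the proof your sketch defers, and as written your account of it would not close.
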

\begin{remark}
A sharper version of the identity holds on the conditional measures with $n\lambda_1 - \lambda_\Sigma$ on the right-hand side, providing a  time-infinitesimal analogue of relative entropy inequalities studied in e.g. \cite{furstenberg1963noncommuting, baxendale1989lyapunov,ledrappier1986positivity}; see \cite{BBPS20} for details. 
\end{remark}
In \cite{BBPS20} we proved the following crucial uniform H\"ormander-type lower bound on the Fisher information, connecting regularity in $W^{s,1}$ of $f^\eps$ to the Fisher information and therefore the Lyapunov exponents.   
\begin{theorem} [Theorem 4.2, \cite{BBPS20}] \label{thm:FIlow}
Consider the SDE \eqref{eq:general-SDE} for vector fields $\set{X_0^\eps,...,X_r^\eps}$ parameterized by $\eps \in (0,1]$. 
Suppose that $\{\widetilde{X}_0^\eps,\widetilde{X}_1^\eps,...,\widetilde{X}_r^\eps\}$ satisfies the uniform H\"ormander condition on $\S M$ and suppose that for all $\eps \in (0,1]$ there exists a unique stationary measure $\nu$ with smooth density $f^\eps$ for the associated projective process $(z_t)$.  
Then $\exists s_\star \in (0,1)$ such that $\forall U \subset \S M$ open geodesic ball, $\exists C_U > 0$ such that $\forall \eps \in (0,1]$ there holds 
\begin{align*}
\norm{\chi_{U}f^\eps}_{W^{s_\star,1}}^2 \leq C_U \left(1 + FI(f^\eps) \right),
\end{align*}
where $\chi_U$ is a smooth cutoff function equal to 1 inside $U$ and outside of a slightly larger ball $U^\prime$.
Note both $s_\star$ and $C_U$ are independent of $\eps$. 
\end{theorem}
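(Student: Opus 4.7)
The plan is to pass to the square root $u^\eps := \sqrt{f^\eps}$, where the Fisher information takes the form of a clean sum of squared $L^2$ norms along the noise directions, then invoke a uniform-in-$\eps$ subelliptic estimate on $\mathbb{S}M$ coming from the uniform Hörmander hypothesis on the lifts $\widetilde{X}_k^\eps$, and finally transfer $H^{s_\star}$ control on $u^\eps$ back to $W^{s_\star,1}$ control on $f^\eps = (u^\eps)^2$ via a fractional Leibniz rule.

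\textbf{Step 1: Reduction to $\sqrt{f}$.} Using the identity $X_k^\ast g = -X_k g - (\Div X_k) g$ with $g = f^\eps$ and $u^\eps = \sqrt{f^\eps}$, one checks that
\[
\frac{|X_k^\ast f^\eps|^2}{f^\eps} = 4|X_k^\eps u^\eps|^2 + 4(\Div X_k^\eps)\, u^\eps X_k^\eps u^\eps + (\Div X_k^\eps)^2 (u^\eps)^2.
\]
Integrating, moving the cross term by integration by parts, and using $\|u^\eps\|_{L^2(\mathbb{S}M)}^2 = \|f^\eps\|_{L^1} = 1$ together with the uniform $C^1$-bounds on the vector fields supplied by Definition \ref{def:UniHormander}, one obtains
\[
\sum_{k=1}^r \|X_k^\eps u^\eps\|_{L^2(\mathbb{S}M)}^2 \lesssim 1 + FI(f^\eps),
\]
uniformly in $\eps$.

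\textbf{Step 2: Uniform subelliptic estimate.} The core of the proof is to establish that, for some $s_\star \in (0,1)$ depending only on the bracket depth $m$ of Definition \ref{def:UniHormander} and on the cutoff $\chi_U$,
\[
\|\chi_U u\|_{H^{s_\star}(\mathbb{S}M)}^2 \lesssim_U \sum_{k=1}^r \|X_k^\eps u\|_{L^2}^2 + \|u\|_{L^2}^2,
\]
uniformly in $\eps \in (0,1]$ for all sufficiently regular $u$. We follow the classical Kohn/Rothschild--Stein $L^2$ proof of Hörmander's hypoellipticity theorem: one first controls fractional regularity in the directions spanned by $\{X_k^\eps\}$, then bootstraps to the first brackets $[X_j^\eps, X_k^\eps]$ using a pseudodifferential commutator identity that pays a factor of $1/2$ in the Sobolev exponent, and iterates through brackets of depth at most $m$ to produce $s_\star = 2^{-m}$. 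The crucial point is that Definition \ref{def:UniHormander} provides uniform bounds $K_n$ on the finite collection $\mathcal{V}_x \subset \mathrm{Lie}^m(\mathcal{F}^\eps)$ and uniform spanning constants $K_0$; tracking these through the Kohn iteration shows that all constants are $\eps$-independent. A partition of unity over the bounded ball $U$ completes the localization.

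\textbf{Step 3: Return to $f^\eps$ via product estimate.} Applying a fractional Leibniz (Kato--Ponce-type) inequality together with Cauchy--Schwarz, for a slightly enlarged cutoff $\chi_{U'}$ we have
\[
\|\chi_U f^\eps\|_{W^{s_\star,1}} = \|\chi_U (u^\eps)^2\|_{W^{s_\star,1}} \lesssim \|\chi_{U'} u^\eps\|_{H^{s_\star}} \, \|u^\eps\|_{L^2}.
\]
Squaring and substituting Steps 1 and 2 with $\|u^\eps\|_{L^2} = 1$ yields the stated bound.

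\textbf{Main obstacle.} The delicate part is Step 2: ensuring that every constant in the Kohn-style iteration genuinely depends only on the uniform data $(m, K_0, K_n)$ provided by Definition \ref{def:UniHormander}, and not on some hidden quantitative non-degeneracy of the $\eps$-dependent vector field family. In particular, the pseudodifferential commutator estimates must be carried out with $\eps$-uniform symbol bounds, and the choice of finite collections $\mathcal{V}_x$ realizing the spanning property must be controlled on bounded sets. Once this bookkeeping is in place the argument is essentially a uniform version of the classical subelliptic theory.
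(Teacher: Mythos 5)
Your Steps 1 and 3 are fine and consistent with the intended strategy (pass to $u^\eps=\sqrt{f^\eps}$, note $FI(f^\eps)$ controls $\sum_k\norm{X_k^\eps u^\eps}_{L^2}^2$ up to divergence terms, and recover $W^{s_\star,1}$ control of $f^\eps=(u^\eps)^2$ by Cauchy--Schwarz/fractional Leibniz). The genuine gap is Step 2. The estimate you claim there, $\norm{\chi_U u}_{H^{s_\star}}^2\lesssim \sum_{k=1}^r\norm{X_k^\eps u}_{L^2}^2+\norm{u}_{L^2}^2$ for \emph{all} sufficiently regular $u$, cannot hold in this setting: the hypothesis is the uniform H\"ormander condition for the family $\{\widetilde{X}_0^\eps,\widetilde{X}_1^\eps,\dots,\widetilde{X}_r^\eps\}$ \emph{including the drift}, and in the application the noise fields are finitely many constant vector fields (lifts of $\partial_{a_k},\partial_{b_k}$, $k\in\mathcal{K}$), whose own Lie algebra is far from spanning $T(\S M)$. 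No Kohn/Rothschild--Stein iteration can manufacture brackets involving $\widetilde{X}_0^\eps$ out of a right-hand side that only sees $X_1^\eps,\dots,X_r^\eps$: any $u$ that is constant along the orbits of the noise fields but rough in transverse directions makes your right-hand side of order $\norm{u}_{L^2}^2$ while the left-hand side is unbounded. Equivalently, your argument never uses that $f^\eps$ is the \emph{stationary} density (stationarity enters only through the normalization $\norm{u^\eps}_{L^2}=1$), so if it worked it would prove a generic functional inequality for arbitrary probability densities, which is false because $FI$ only involves the noise directions.

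The missing idea is to exploit the stationary Kolmogorov/Fokker--Planck equation satisfied by $f^\eps$ on $\S M$: stationarity gives control of the drift derivative $\widetilde{X}_0^\eps$ of $u^\eps$ (in a suitable negative-order/dual sense) in terms of the noise-direction data, and the quantitative hypoelliptic iteration must then be run for the ``sum of squares plus drift'' structure, combining the $FI$-controlled derivatives $X_k^\eps u^\eps$ ($k\geq 1$) with the equation-controlled drift direction, so that brackets with $\widetilde{X}_0^\eps$ -- which are exactly what the uniform H\"ormander condition provides -- become available. This is how the cited proof in \cite{BBPS20} proceeds, with the additional bookkeeping that all constants depend only on the uniform data of Definition \ref{def:UniHormander}; your concern about $\eps$-uniform symbol bounds is legitimate but secondary. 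As a byproduct, the exponent bookkeeping also changes: $s_\star$ is not obtained by bracketing the noise fields among themselves (your $2^{-m}$ heuristic), since the drift brackets, reached only through the equation, pay their own losses.
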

The above two results give a clear path towards estimating Lyapunov exponents from below if lower bounds on the regularity of $f^\eps$ can be obtained. 

\subsection{Application to the Galerkin-Navier-Stokes equations} \label{sec:GNSEproSpan}
In the context of the SDE in the specific class \eqref{eq:xtclass}, one can prove the following using standard methods.  
As discussed in Section \ref{sec:GNSE}, the Galerkin-Navier-Stokes equations written in Fourier variables and when phase space is interpreted through the real and imaginary parts as $\R^d$, the following theorem applies.  
\begin{theorem}[See \cite{BBPS20} or \cite{BL20}] 
	Let $X_0^\eps(x) = B(x,x) - \eps Ax$ and consider the class of SDE \eqref{eq:xtclass}. 
	These SDEs each generate families of global-in-time, smooth stochastic diffeomorphisms $\Phi_{\omega}^t$, and if $\set{X_0^\eps, X_1,...X_r}$ satisfies the parabolic H\"ormander condition, then for all $\eps > 0$, there exists a unique stationary measure $\mu$ with a smooth, density $\rho$ which satisfies a pointwise Gaussian upper bound.  
	Moreover, there exists a top Lyapunov exponent $\lambda_1(\eps) \in \R$ and a sum Lyapunov exponent $\lambda_\Sigma(\eps)$ such that the following limit holds $\mu \times \PP$ almost-surely
	\begin{align*}
		\lambda_1 & := \lim_{t \to \infty}\frac{1}{t} \log \abs{D_x \Phi^t_\omega}\\ 
		\lambda_\Sigma & := \lim_{t \to \infty}\frac{1}{t} \log \,\mathrm{det} (D_x \Phi^t_\omega). 
	\end{align*}
\end{theorem}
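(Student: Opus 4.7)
The proof proceeds in three stages: (a) global existence and smoothness of the stochastic flow $\Phi^t_\omega$, (b) existence, uniqueness, and a pointwise Gaussian upper bound for the stationary measure, and (c) existence of $\lambda_1$ and $\lambda_\Sigma$ via Kingman's subadditive ergodic theorem.

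For (a), I would exploit the cancellation $x \cdot B(x,x) = 0$. Because each $X_k = e_k$ is a constant vector field, the Stratonovich and It\^o forms of \eqref{eq:xtclass} coincide, and It\^o's formula yields
\begin{align*}
d\abs{x_t}^2 = \bigl(-2\eps\, x_t \cdot A x_t + \eps \sum_k \abs{e_k}^2 \bigr)\,\dt + 2\sqrt{\eps}\sum_k (x_t \cdot e_k)\,\dee W_t^k.
\end{align*}
Applying It\^o to $e^{\eta \abs{x_t}^2}$ for $\eta>0$ sufficiently small (compared with the spectral gap of $A$ and the noise intensity) gives uniformly bounded exponential moments, which in particular precludes explosion. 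Since $X_0^\eps$ and each $X_k$ are smooth with polynomial growth and the moment bound rules out blow-up, Kunita's theory of stochastic flows produces a global-in-time $C^\infty$ stochastic flow of diffeomorphisms.

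For (b), the Feller property combined with the uniform moment bound yields existence of at least one stationary measure $\mu$ by Krylov--Bogolyubov. Under the parabolic H\"ormander condition, H\"ormander's theorem gives smoothness of the transition kernel, so $\mathcal{P}_t$ is strong Feller; topological irreducibility on $\R^d$ follows from the Stroock--Varadhan support theorem applied to the controlled skeleton, using the H\"ormander condition together with the damping $-\eps A$ to steer trajectories to arbitrary targets. Doob--Khasminskii then delivers uniqueness and smoothness of the density $\rho$. The pointwise Gaussian upper bound follows from combining the exponential moment estimate with hypoelliptic short-time smoothing for $\mathcal{P}_t$, essentially as in \cite{BL20}.

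For (c), the Jacobian $D_x\Phi^t_\omega$ satisfies the linearized cocycle equation $\tfrac{d}{dt}D_x\Phi^t = \nabla X_0^\eps(\Phi^t_\omega(x))\, D_x\Phi^t$, whose coefficient has at most linear growth in the trajectory. Using the Gaussian tail of $\mu$ from (b) to control $\nabla X_0^\eps$ along the flow, standard Gr\"onwall-type estimates produce the mild integrability $\E \log^+ \norm{D_x\Phi^1_\omega} \in L^1(\mu \otimes \PP)$ and likewise for $\log\abs{\det D_x\Phi^1_\omega}$. Since both $\log\norm{D_x\Phi^t}$ and $\log\abs{\det D_x\Phi^t}$ are subadditive cocycles over the skew product of $\Phi^t_\omega$ with the Wiener shift, Kingman's subadditive ergodic theorem yields $\mu\otimes\PP$-a.s. limits $\lambda_1(\eps)$ and $\lambda_\Sigma(\eps)$; uniqueness of $\mu$ forces ergodicity and hence these limits are deterministic.

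The main obstacle is the pointwise Gaussian upper bound on $\rho$: the exponential moment bound from (a) only controls $\rho$ in an integrated sense, and upgrading it requires combining hypoelliptic ultracontractivity with quantitative control of Lyapunov function level sets. A secondary but nontrivial ingredient is verifying irreducibility of the controlled skeleton, where one must engineer controls using only the limited noise directions $\{e_k\}$ after Lie bracketing with the drift; both facts are carried out under exactly the present hypotheses in \cite{BBPS20,BL20}.
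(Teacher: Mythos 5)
Your outline is correct and follows essentially the same route as the paper, which does not prove this statement itself but cites \cite{BBPS20,BL20}: a Lyapunov/exponential-moment estimate from the cancellation $x\cdot B(x,x)=0$ for global well-posedness and Gaussian bounds, H\"ormander smoothing plus irreducibility and Doob--Khasminskii for uniqueness of $\mu$, and Kingman's subadditive ergodic theorem (with the mild integrability supplied by the moment bounds) for $\lambda_1$ and $\lambda_\Sigma$. No substantive discrepancy with the cited arguments.
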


\begin{remark}
	In fact, if $X_0^\eps, \set{X_1,...X_r}$ satisfies the uniform parabolic H\"ormander's condition, then one can prove the pointwise Gaussian upper bound on $\rho$ \emph{uniformly} in $\eps$, as well as a uniform-in-$\eps$ strictly positive lower bound on all compact sets \cite{BL20}. 
\end{remark}

The main theorem of this paper is a description of the Lie algebra $\mathfrak{m}_x(T_xM)$ for the 2d Galerkin-Navier-Stokes equations \eqref{def:NSEFF}. 
\begin{theorem}\label{thm:Main-projective-spanning}
	Let $N \geq 392$, let $X_0(w) = B(w,w) + \ep \Delta w$ be the Galerkin Navier-Stokes vector field over $M = \C^{\Z^2_{+,N}}$, and let $\mathcal{X}= \{\partial_{a_k},\partial_{b_k}\}_{k\in \mathcal{K}} \subseteq \mathfrak{X}(M)$ where $\mathcal{K}\subseteq \Z^{2}_{+,N}$ satisfies Assumption \ref{ass:hypo}.
	Then, $\forall w\in M$ (in a uniform way)
	\[
	\mathfrak{m}_w([\mathcal{X},X_0^\ep]) = \mathfrak{sl}(T_wM).
	\]
	and in particular, from Proposition \ref{prop:class-proj-span}, $\widetilde{X}_0^\ep,\widetilde{\mathcal{X}}$ satisfies the uniform-in-$\ep$ parabolic H\"ormander condition on $\S M$. 
\end{theorem}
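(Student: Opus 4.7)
The plan is to establish $\mathfrak{m}_w([\mathcal{X},X_0^\epsilon]) = \mathfrak{sl}(T_wM)$ by exploiting the Hermitian/complex structure on $M = \C^{\Z^2_{+,N}}$, organizing the Lie algebra around a diagonal sub-algebra in the Fourier basis, and then running a root-space-style expansion. I would first pass through Proposition \ref{prop:sufficient-projectspan-NS} (the complex-geometry reformulation of Proposition \ref{prop:class-proj-span}): working with the $\C$-linear tangent bundle turns $\mathfrak{sl}(T_wM)$ into $\mathfrak{sl}(\C^{\Z^2_{+,N}})$ together with an antilinear complement, and it makes the constant noise vector fields $\{\partial_{a_k},\partial_{b_k}\}$ combine naturally into complex derivations $\partial_{w_k},\partial_{\bar w_k}$ that match the structure of $B$ in Fourier space.

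Next, I would harvest the primitive generators. Since $B$ is bilinear and $A$ is the (diagonal) Laplacian, the iterated constant-coefficient commutators $[\partial_{a_j},[\partial_{a_k},X_0^\epsilon]]$ and $[\partial_{b_j},[\partial_{b_k},X_0^\epsilon]]$ produce the constant vector fields $2B(e_j,e_k)$. Assumption \ref{ass:hypo} is precisely the combinatorial condition making these constants generate $\partial_{a_\ell},\partial_{b_\ell}$ for every $\ell \in \Z^2_{0,N}$, not merely $\ell \in \mathcal{K}$. Thus the full set of constant directions lies inside $\mathrm{Lie}(\mathcal{X},[\mathcal{X},X_0^\epsilon])$, which already reduces the problem to one purely about the linearized field $\nabla X_0^\epsilon(w) = 2B(\cdot,w) - \epsilon A$.

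With all constants available, I would construct the diagonal sub-algebra $\mathfrak{h}$ of Section \ref{sec:diagonal}: by choosing a constant vector field $Y_\ell$ and a position-dependent vector field built from $[\partial,X_0^\epsilon]$ whose sum vanishes at $w$, the resulting matrix $M_X(w)$ acts as a diagonal operator in the Fourier basis with $\ell$-th eigenvalue a polynomial in the coefficients $c_{j,k}$. The key structural claim (Corollary \ref{cor:disDon}) is that if the collection of such diagonal matrices separates every pair of modes $\ell_1 \neq \ell_2 \in \Z^2_{+,N}$, then bracketing $\mathfrak h$ against the "off-diagonal" generators supplied by $\nabla B(\cdot,w)$ produces each root vector $E_{\ell_1,\ell_2}$, yielding all of $\mathfrak{sl}$, exactly as in the root space decomposition of a semisimple Lie algebra relative to a Cartan subalgebra.

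The main obstacle is the distinctness/genericity input, Proposition \ref{prop:main-distinctness-prop}: one must show that for every pair of distinct lattice points, at least one diagonal generator distinguishes them. After using the lattice lemma of Appendix \ref{sec:lattice} to clear denominators and strip the trivial identities forced by the antisymmetry $c_{j,k} = -c_{k,j}$ and by the reality constraint, this reduces to showing that a finite list of explicit polynomial systems in the integer lattice variables and the aspect ratio $r$ have only the obvious (trivial) common solutions. These systems are of large degree in many variables and far beyond what can be done by hand, so I would invoke Maple's F4 Gr\"obner basis computation (keeping $r$ symbolic so the conclusion is uniform in the aspect ratio), treating the lattice constraints by saturation. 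The threshold $N \geq 392$ arises as the smallest truncation at which the expanded $\mathcal{Z}^n$ contains enough interacting triples to rule out every exceptional degeneracy produced by this computation, after which Proposition \ref{prop:class-proj-span} upgrades the conclusion to the uniform projective H\"ormander condition on $\S M$.
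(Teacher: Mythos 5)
Your outline reproduces the paper's global architecture (complexification via Proposition \ref{prop:sufficient-projectspan-NS}, constant directions from Assumption \ref{ass:hypo}, a diagonal subalgebra plus a genericity input), but it has two genuine gaps at exactly the places where the paper has to work hardest. First, your structural claim that a sufficiently ``distinct'' family of diagonal matrices lets you bracket out each individual root vector $E^{\ell_1,\ell_2}$ is false as stated: the diagonal entries obey the inversion symmetry $\mathbb{D}^k_{-i}=-\mathbb{D}^k_i$, so every eigenvalue of $\mathrm{ad}(\mathbb{D})$ has multiplicity at least two ($E^{i,j}$ and $E^{-j,-i}$ share it), and no choice of $\mathbb{D}\in\mathfrak{h}$ can separate them. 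The Krylov-type argument therefore only yields the two-term combinations $M^{i,j}=c_{j,i-j}E^{i,j}-c_{i,i-j}E^{-j,-i}$, and one needs a further step (Proposition \ref{prop-Mij-generate}): form $[M^{i,k},M^{k,j}]$ and $[M^{i,k'},M^{k',j}]$ and invert a $2\times 2$ system, whose determinant $d^{k,k'}_{i,j}$ must be shown nonzero for some admissible $k,k'$ --- this is a separate computational algebraic geometry statement (Proposition \ref{prop:algebraic-geo-proof-1}) that your proposal omits entirely. Relatedly, the correct distinctness notion is not that the diagonals ``separate every pair of modes'' but the four-point condition $\mathbb{D}_i+\mathbb{D}_j+\mathbb{D}_\ell+\mathbb{D}_m\neq 0$ for $(i,j,\ell,m)\in\mathcal{C}_N$ with $i+j+\ell+m=0$ (equivalently, distinctness of differences modulo the symmetry), and Proposition \ref{prop:main-distinctness-prop} only asserts this for \emph{some} $k$, with Lemma \ref{lem:linear-combin-regular} then producing a distinct element as a generic linear combination of the $\mathbb{D}^k$.

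Second, your account of where $N\geq 392$ comes from is incorrect, and the mechanism behind it is the main technical content of Section \ref{sec:distinct} that your proposal skips. The threshold has nothing to do with Assumption \ref{ass:hypo} or the sets $\mathcal{Z}^n$ (hypoellipticity of the forcing holds for all $N$). Because of the Galerkin cutoff, $\mathbb{D}^k_i$ is only piecewise rational in $(i,k)$: it takes one of the four forms $\{\mathbb{D}_+,\bar{\mathbb{D}},\mathbb{D}_-,0\}$ depending on whether $i\pm k$ survive the truncation. To apply the lattice lemma (Lemma \ref{lem:Reducek}, whose role is to trade vanishing of $P(i,j,\ell,m,k,r)$ on a full $k$-window of side $2a+1>19$ for vanishing of its $k$-coefficients, not to ``clear denominators''), one must find, for every quadruple, a window $\{|k-k'|_{\ell^\infty}\leq a\}$ with $a=10$ on which all four entries keep a fixed algebraic form; this is Lemma \ref{lem:key-lemma}, proved by the good/bad-set decomposition and an inclusion--exclusion argument requiring $N>4(9a+8)=392$, and it forces checking $34$ distinct ideals (one per unordered choice of forms) rather than a single polynomial system. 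Without this treatment of the piecewise structure, the reduction to a Gr\"obner basis computation uniform in $N$ and $r$ does not go through.
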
 
The majority of the paper is spent proving Theorem \ref{thm:Main-projective-spanning}; see Section \ref{sec:distinct} for a summary of how the pieces fit together in the proof.  
Next, we briefly summarize next why Theorem \ref{thm:Main-projective-spanning} implies Theorem \ref{thm:mainchaos} from the results of \cite{BBPS20}.  

The following lemma is a consequence of H\"ormander's theorem, Doob-Khasminskii's theorem, and geometric control theory; see \cite{BBPS20} for details. 
\begin{lemma}[Theorem B.1, \cite{BBPS20}]
Let $X_0^\eps(x) = B(x,x) - \eps Ax$ and consider the class of SDE \eqref{eq:xtclass} (with the corresponding conditions assumed on $B$).
Suppose that the lifts $\widetilde{X}_0^\ep,\widetilde{\mathcal{X}}$ satisfies the uniform parabolic H\"ormander condition on $\S M$. 
Then, $\forall \eps \in (0,\infty)$, there exists a unique stationary measure $\nu$ for the associated projective process with a smooth, strictly positive density $f^\eps$ with respect to Lebesgue measure such that $f^\eps \log f^\eps \in L^1$
and $\exists C,\gamma > 0$ such that $\forall \eps \in (0,1]$,
\begin{align*}
\int_{\mathbb S M } f^\eps e^{\gamma \abs{x}^2} dq < C, 
\end{align*}
and $\forall N > 0$, the following moment bound holds $\forall \eps \in (0,1]$ (not uniformly in $N$ or $\eps$),
\begin{align*}
\int_{\mathbb S M} \brak{x}^N f^\eps \log f^\eps \dee q < \infty. 
\end{align*}
\end{lemma}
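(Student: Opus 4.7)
The plan is to combine Hörmander-type regularity of the projective semigroup with the dissipative, energy-conserving structure of \eqref{eq:xtclass}. I would proceed in three steps.

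\textbf{Step 1 (existence, smoothness, uniqueness).} Under the uniform parabolic Hörmander hypothesis on $\widetilde{X}_0^\eps,\widetilde{\mathcal X}$, Hörmander's theorem gives that the projective semigroup $\widetilde{\mathcal P}_t$ has a smooth transition density on $\mathbb S M$ for every $t>0$ and is strong Feller. Existence of a stationary $\nu$ then follows from a Krylov--Bogolyubov argument applied to the projective process, using the uniform moment bound of Step 2 together with compactness of the sphere fibre for tightness. Uniqueness will come from Doob--Khasminskii once topological irreducibility of $\widetilde{\mathcal P}_t$ is verified, which reduces, via the Stroock--Varadhan support theorem, to controllability of the deterministic control system associated to \eqref{eq:general-SDE} on $\mathbb S M$; this is precisely the geometric-control content of the parabolic Hörmander condition on $\mathbb S M$. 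Strict positivity of $f^\eps$ then follows from strong Feller plus irreducibility.

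\textbf{Step 2 (uniform Gaussian moment bound).} Apply Itô's formula to $\varphi(x)=e^{\gamma|x|^2}$ on the base. The cancellation $x\cdot B(x,x)=0$ kills the nonlinear contribution to the $|x|^2$-drift, coercivity $x\cdot Ax\geq \lambda_A|x|^2$ controls the dissipative drift, and both the dissipative drift and the Itô correction carry a prefactor $\eps$ thanks to the $\sqrt{\eps}$ scaling of the noise. For $\gamma>0$ chosen small depending only on $\lambda_A$ and $\sum_k |e_k|^2$ (but independent of $\eps$) one obtains
\[
\mathcal L^{\eps}\varphi(x) \;\leq\; \eps\bigl(-c\gamma|x|^2 + C\gamma\bigr)\,e^{\gamma|x|^2}.
\]
Integrating against the base marginal $\mu^\eps$ of $\nu$, after a standard localisation/cutoff, the factor $\eps$ cancels and yields $\int_M e^{\gamma|x|^2}\,\dee\mu^\eps \leq C$ uniformly in $\eps\in(0,1]$. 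Since $|x|$ depends only on the base, this is exactly $\int_{\mathbb S M} f^\eps e^{\gamma|x|^2}\,\dee q\leq C$.

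\textbf{Step 3 (entropy integrability and main obstacle).} For $\brak{x}^N f^\eps \log f^\eps \in L^1$, write $s\log s = s(\log s)_+ - s(\log s)_-$; the negative part is bounded pointwise by $e^{-1}$, so it integrates against $\brak{x}^N$ after absorbing into the Gaussian weight of Step 2. For the positive part, the elementary inequality $s(\log s)_+\leq \delta^{-1}s^{1+\delta}$ reduces the claim to $\int \brak{x}^N (f^\eps)^{1+\delta}\,\dee q < \infty$. This is the main technical obstacle: the uniform Gaussian bound of Step 2 only controls the base marginal $\rho^\eps$, whereas here one needs genuine pointwise decay of $f^\eps$ on all of $\mathbb S M$. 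The standard resolution is to upgrade the moment estimate to a pointwise Gaussian upper bound $f^\eps(x,v)\leq C_\eps e^{-\gamma'|x|^2}$, obtained by combining Hörmander-type smoothing estimates (or Bismut/Malliavin calculus) applied to the projective Fokker--Planck equation with the pointwise Gaussian bound on $\rho^\eps$ of \cite{BL20}, using compactness of the sphere fibre to convert base decay into decay of $f^\eps$. The resulting constants depend on $\eps$ and $N$, which is exactly why the conclusion is stated non-uniformly in these parameters.
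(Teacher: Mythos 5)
This lemma is not proved in the paper at all: it is imported verbatim as Theorem B.1 of \cite{BBPS20}, with the one-line attribution that it follows from H\"ormander's theorem, the Doob--Khasminskii theorem, and geometric control theory. Your outline uses exactly this toolkit (hypoelliptic smoothing and strong Feller, Krylov--Bogolyubov with a Lyapunov function exploiting $x\cdot B(x,x)=0$ and the $\eps$-balanced fluctuation--dissipation scaling, Doob--Khasminskii for uniqueness, and a pointwise/\,$L^\infty$ bound on $f^\eps$ plus the Gaussian moment to get the weighted entropy bound), so at the level of strategy it matches the route the paper points to.

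Two places in your plan are not justified as written. First, in Step 1 you assert that topological irreducibility of the projective semigroup ``is precisely the geometric-control content of the parabolic H\"ormander condition on $\S M$.'' This is false in general: bracket spanning gives accessibility (reachable sets with nonempty interior), not controllability, and one can have hypoelliptic systems with several disjoint open invariant sets and hence several smooth ergodic stationary densities. Irreducibility of the projective process must instead be extracted from the specific structure of the class \eqref{eq:xtclass} --- constant forcing directions $e_k$, the bilinear energy-conserving drift, and the damping to return near the origin on the base, together with transitivity of the matrix group on the fibers --- which is exactly the geometric control argument carried out in \cite{BBPS20}; without it your Doob--Khasminskii step (and hence uniqueness and strict positivity) is unsupported. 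Second, a minor slip in Step 3: the bound $f(\log f)_-\leq e^{-1}$ does not integrate against $\brak{x}^N$ on the noncompact $\S M$; one should instead use $s(\log s)_-\leq C_\beta s^{1-\beta}$ and H\"older against the Gaussian weight from Step 2. The rest of Step 3 (reduce the positive part to $\|f^\eps\|_{L^\infty}<\infty$ times the Gaussian moment, with constants depending on $\eps$ and $N$) is consistent with how the cited result is obtained.
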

In view of the above, Proposition \ref{prop:FI} gives the following for \eqref{eq:xtclass} (assuming projective hypoellipticity), 
\begin{align}
	FI(f^\ep ) = \frac{n \lambda_1}{\eps} - 2 \mathrm{tr} \, A. \label{eq:FInse}
\end{align}
Theorem \ref{thm:FIlow} then implies there exists an $s \in (0,1)$ such that for every bounded open set $U \subseteq SM$ we have
\begin{align*}
	\norm{f^\eps}_{W^{s_\star,1}(U)}^2 \leqc_U 1 + \frac{\lambda_1}{\eps}. 
\end{align*}
Therefore, if $\lambda_1/\eps$ were to remain bounded, one can show that $\set{f^\eps}_{\eps > 0}$ is precompact in $L^p$ for all $p \geq 1$ sufficiently small and so there is a strongly 
convergent subsequence $f^{\eps_n} \to f \in L^1$ which is an absolutely continuous stationary density for the $\eps = 0$ limiting deterministic projective process [Proposition 6.1, \cite{BBPS20}]. 

Projective hypoellipticity played the crucial role in reducing the estimate on the Lyapunov estimate to one of regularity of $f^\eps$, and for the estimate \eqref{eq:chaos} to whether or not there can exist an invariant measure with an $L^1$ density for the \emph{deterministic} $\eps = 0$ projective process.  
That no such invariant density can exist for any model of the form \eqref{eq:xtclass} was proved in [Proposition 6.2 \cite{BBPS20}], and therefore $\lambda_1/\eps \to \infty$. 
The major additional ingredient used in this step is that the $\eps = 0$ Jacobian $D_x \Phi^t$ grows unboundedly as $t \to \infty$ for a.e. initial condition $x$, necessitating concentrations in any invariant measures (see \cite{BBPS20} for details).  
This is deduced using the special structure of the nonlinearity and for other models may not be straightforward to verify.

To summarize, to prove Theorem \ref{thm:mainchaos}, it suffices only to verify that $\{\widetilde{X}^\eps_0, \widetilde{X}_1,...,\widetilde{X}_r\}$ satisfies the uniform-in-$\eps$ parabolic H\"ormander condition, i.e. Theorem \ref{thm:Main-projective-spanning}. 
The remainder of the paper is dedicated to the proving of this result, which as detailed in the next section, is a purely algebraic question. 

\begin{remark}
For the specific case of Navier-Stokes with additive stochastic forcing, the Fisher information becomes 
\begin{align*}
	FI(f^\ep ) = \sum_{k \in \mathcal{K}} \int \left(\abs{\partial_{a_k} \log f^\eps}^2 + \abs{\partial_{b_k} \log f^\eps}^2\right) f^\eps\, \dee q.
\end{align*}
\end{remark}

\section{Projective hypoellipticity on complex geometries} \label{sec:ProjComp}
\subsection{Real vs complex spanning}
Treating the phase space $\C^{\Z^2_{+,N}}$ as a real manifold using the real and imaginary parts can be awkward and lead to very cumbersome calculations. 
Due to the convenience of the Fourier description when dealing with Galerkin truncations of PDEs, it makes more sense to find a natural, complex way to view phase space. 
Specifically, if we have a complex phase space $\C^n$, we should treat it as a \emph{complex manifold} and complexify the tangent space.  
First, we review some of the basic concepts from complex geometry for the readers' convenience (see e.g. [Chapter 1, \cite{huybrechts2005complex}]) and explain how the ideas apply to hypoellipticity of stochastic PDEs, providing a cleaner proof of the spanning condition for Galerkin Navier-Stokes obtained in \cite{E2001-lg}.
Finally, we explain how the ideas extend to the question of projective hypoellipticity and formulate the sufficient condition which occupies the rest of the paper. 

\begin{definition}
Given a real vector space $V$, define its complexification by
\begin{align*}
V \otimes \C = \set{v_1 + i v_2 : v_1,v_2 \in V}. 
\end{align*}
\end{definition}
We begin by noting the following simple, but crucial equivalence between complex and real spanning of a collection of vectors in a real vector space.
\begin{lemma}\label{lem:complex-span}
Let $V$ be a real vector space and let $V\otimes\C$ be it's complexification. For a given collection of vectors $\{v_k\}\subset V$, we have
\begin{equation}\label{eq:complexspan}
	\Span\{v_k\} = V,\quad \text{if and only if}\quad \Span_{\C}\{v_k\} = V\otimes \C,
\end{equation}
where $\Span_{\C}$ denotes the span of a collection of vectors using complex coefficients.
\end{lemma}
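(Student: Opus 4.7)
The plan is to prove both implications by splitting complex coefficients into their real and imaginary parts and exploiting the fact that $V \otimes \mathbb{C} = V \oplus iV$ as a real vector space, where the two summands are real-linearly independent.

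For the forward implication, I would start from $\operatorname{Span}\{v_k\} = V$ and take an arbitrary element $w = w_1 + i w_2 \in V \otimes \mathbb{C}$ with $w_1, w_2 \in V$. By hypothesis there exist real scalars $a_k, b_k$ (only finitely many nonzero) so that $w_1 = \sum_k a_k v_k$ and $w_2 = \sum_k b_k v_k$. Setting $c_k := a_k + i b_k \in \mathbb{C}$ gives $w = \sum_k c_k v_k$, hence $w \in \operatorname{Span}_{\mathbb{C}}\{v_k\}$.

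For the reverse implication, suppose $\operatorname{Span}_{\mathbb{C}}\{v_k\} = V \otimes \mathbb{C}$ and let $v \in V$. Since $V \subset V \otimes \mathbb{C}$, there exist complex coefficients $c_k = a_k + i b_k$ with $v = \sum_k c_k v_k = \sum_k a_k v_k + i \sum_k b_k v_k$. Here I would use the canonical real-linear decomposition $V \otimes \mathbb{C} = V \oplus iV$: since each $v_k \in V$, the two partial sums $\sum_k a_k v_k$ and $\sum_k b_k v_k$ lie in $V$, so they are respectively the real and imaginary parts of $v$. But $v \in V$ has zero imaginary part, so $\sum_k b_k v_k = 0$ and $v = \sum_k a_k v_k \in \operatorname{Span}\{v_k\}$.

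This argument is entirely routine linear algebra and I do not anticipate any real obstacle; the only thing one needs is the canonical identification $V \otimes \mathbb{C} \cong V \oplus iV$ as real vector spaces, which allows one to read off real and imaginary parts unambiguously. No dimension-counting or choice of basis is required, so the statement works verbatim whether $V$ is finite- or infinite-dimensional.
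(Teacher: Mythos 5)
Your proof is correct and follows essentially the same route as the paper: the forward direction spans real and imaginary parts separately, and the converse takes real parts of a complex representation of $v \in V$ (your identification $V\otimes\C = V \oplus iV$ is just a slightly more explicit way of phrasing the paper's ``take the real part of both sides''). No issues.
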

\begin{proof}
Real spanning of $V$ implies complexified spanning since one can span the real and imaginary parts separately. For the converse, suppose that \eqref{eq:complexspan} holds. This means that for any $v\in V$ there exist $\{a_k\}\subset \C$ such that $\sum_{k} \alpha_k v_k = v$. Taking the real part of both sides gives $\sum_{k} \mathrm{Re}(\alpha_k) v_k = v$, implying that $\{v_k\}$ spans $V$.
\end{proof}

\subsection{H\"ormanders condition on $\C^n$}

Now we turn to the space $\C^n$, where complexification of the tangent space is natural and most useful. Let $X$ be a smooth vector field over $\C^n$, where $\C^n$ is viewed as a real manifold with real tangent space $T\C^n$ spanned by the coordinate vectors $\partial_{a_k}, \partial_{b_k}$, corresponding to the real and imaginary parts respectively. Clearly, $T\C^n$ is isomorphic as a vector space to $\C^n$ and therefore we may view each $X$ as a mapping $\C^n \to \C^n$ with $X^k:\C \to \C$ the $k$th component of the image of that map. In the $\partial_{a_k}, \partial_{b_k}$ basis we can write $X$ as
\begin{equation}\label{eq:real-form-vec}
	X = \sum_k\mathrm{Re}(X^k)\partial_{a_k} + \mathrm{Im}(X^k)\partial_{b_k}.
\end{equation}
In what follows we will complexify the tangent space $T\C^n\otimes \C$ and define complex basis vectors
\[
	\partial_{z_k} = \tfrac{1}{2}\left(\partial_{a_k} - i \partial_{b_k}\right),\quad \overline{\partial}_{z_k} = \tfrac{1}{2}\left(\partial_{a_k} + i \partial_{b_k}\right).
\]
This naturally induces the splitting $T\C^n\otimes \C = T^{1,0}\C^n \oplus T^{0,1}\C^n$, where $T^{1,0}\C^n = \Span_{\C}\{\partial_{z_k}\}$, $T^{0,1}\C^n = \Span_{\C}\{\overline{\partial}_{z_k}\}$, known as the holomorphic and anti-holomorphic bundles respectively (see \cite{huybrechts2005complex}). In this new basis, we see that \eqref{eq:real-form-vec} becomes
\begin{equation}\label{eq:complex-form-vec}
	X = \sum_kX^k\partial_{z_k} + \overline{X^k}\overline{\partial}_{z_k}.
\end{equation}

Recall that the Lie bracket $[\,\cdot\,,\,\cdot\,]$ is coordinate independent and does not depend on the choice of basis and so neither does $\mathrm{Lie}(\mathcal{F})$ for some collection $\mathcal{F}\subseteq \mathfrak{X}(\C^n)$. Given a collection $\mathcal{F}\subset \mathfrak{X}(\C^n)$, let $\mathrm{Lie}(\mathcal{F})_{\C}$ be the complexification of $\mathrm{Lie}(\mathcal{F})$ (obtained by replacing $\Span$ with $\Span_\C$ in the definition \eqref{eq:lie-def}). We now have the following simple corollary of Lemma \ref{eq:complexspan} regarding spanning for $\mathrm{Lie}_x(\mathcal{F})_\C:=\{ X(x)\,:\, X\in \mathrm{Lie}(\mathcal{F})_\C\}$.
\begin{lemma}\label{lem:complex-bracket-gen}
A collection $\mathcal{F}\subseteq \mathfrak{X}(\C^n)$ satisfies H\"ormander's condition on $\C^n$ (as a real manifold) if and only if for each $z\in \C^n$ 
\[
\mathrm{Lie}_z(\mathcal{F})_\C= T^{1,0}\C^n\oplus T^{0,1}\C^n.
\]
\end{lemma}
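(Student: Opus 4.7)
The plan is to reduce the claim to a direct application of Lemma \ref{lem:complex-span} with $V = T_z\C^n$ (viewed as a $2n$-dimensional real vector space), after checking that evaluation at a point commutes with the complexification of the Lie algebra.

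First I would unpack the definition of $\mathrm{Lie}(\mathcal{F})_\C$. By construction, every element of $\mathrm{Lie}(\mathcal{F})_\C$ can be written as $X + iY$ for some $X, Y \in \mathrm{Lie}(\mathcal{F})$, since complexification distributes over the iterated span and the bracket operation (the bracket is $\R$-bilinear, so $[X_1 + iY_1, X_2 + iY_2]_\C = [X_1,X_2] - [Y_1,Y_2] + i([X_1,Y_2] + [Y_1,X_2])$ already lies in $\mathrm{Lie}(\mathcal{F}) + i\,\mathrm{Lie}(\mathcal{F})$). Evaluating at $z \in \C^n$ is $\R$-linear and therefore commutes with this complexification, giving the identity
\[
\mathrm{Lie}_z(\mathcal{F})_\C \;=\; \mathrm{Lie}_z(\mathcal{F}) \otimes \C
\]
as subspaces of $T_z\C^n \otimes \C$.

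Next, I would apply Lemma \ref{lem:complex-span} to the real vector space $V := T_z\C^n$, taking the family $\{v_k\}$ to be $\{X(z) : X \in \mathrm{Lie}(\mathcal{F})\}$. The lemma tells us that this family spans $T_z\C^n$ as a real vector space if and only if it spans $T_z\C^n \otimes \C$ as a complex vector space. Combined with the identification above, this yields
\[
\mathrm{Lie}_z(\mathcal{F}) = T_z\C^n \quad \Longleftrightarrow \quad \mathrm{Lie}_z(\mathcal{F})_\C = T_z\C^n \otimes \C.
\]
Finally, I would invoke the standard fact from complex geometry (see the discussion preceding the lemma) that the complexification of the real tangent space splits as $T_z\C^n \otimes \C = T^{1,0}_z\C^n \oplus T^{0,1}_z\C^n$ under the basis change $\partial_{a_k}, \partial_{b_k} \mapsto \partial_{z_k}, \overline{\partial}_{z_k}$. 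Substituting this decomposition on the right-hand side completes the equivalence and establishes the lemma.

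I do not expect any serious obstacles here: the content is entirely algebraic/linear-algebraic, and the only point requiring a line of care is the verification that the bracket in $\mathrm{Lie}(\mathcal{F})_\C$ is precisely the $\C$-bilinear extension of the bracket on $\mathrm{Lie}(\mathcal{F})$, so that passage between the complexified and real pictures is consistent with pointwise evaluation.
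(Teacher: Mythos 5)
Your proposal is correct and follows essentially the same route as the paper, which presents this lemma as a direct corollary of Lemma \ref{lem:complex-span} applied fiberwise to $V = T_z\C^n$, together with the observation that evaluation at $z$ commutes with complexification and the standard splitting $T_z\C^n\otimes\C = T^{1,0}_z\C^n\oplus T^{0,1}_z\C^n$. The one point you single out for care — that the bracket on $\mathrm{Lie}(\mathcal{F})_\C$ is the $\C$-bilinear extension of the real bracket — is consistent with the paper's definition of $\mathrm{Lie}(\mathcal{F})_\C$ (replace $\Span$ by $\Span_\C$ over the same real generators), so no gap remains.
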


\begin{remark}The same proof also applies to any subalgebra of $\mathrm{Lie}(\mathcal{F})$, for instance the Lie algebra ideal $\mathrm{Lie}(X_0;\mathcal{F})$ with respect to a distinguished drift vector field $X_0$.
\end{remark}

\begin{remark}
Lemma \ref{lem:complex-bracket-gen} means from a practical perspective that in order to check H\"ormander's condition for a collection of vector fields on $\C^n$, it is sufficient to take complex linear combinations and attempt to isolate $\partial_{z_k}$ and $\overline{\partial}_{z_k}$ separately in order to span both $T^{1,0}\C^n$ and $T^{0,1}\C^n$.
\end{remark}

\subsection{Application: hypoellipticity for the stochastic Navier-Stokes equations}\label{subsec:SNS-app}

In this section we show how the complexification procedure above allows us to give a cleaner proof of H\"ormander's condition for the Navier-Stokes equations with additive stochastic forcing in 2d, first identified in \cite{E2001-lg} and expanded upon in \cite{HM06}.  
Recall from Section \ref{sec:GNSE}, we can formulate the 2d stochastic Galerkin-Navier-Stokes equations as an SDE on $M=\C^{\Z^2_{+,N}}$ by
\[
    \dot{w} = X^\ep_0(w) +  \sqrt{\eps} \sum_{k\in\mathcal{K}} \left(\alpha_k  \dot{W}_t^{(k;a)} \partial_{a_k} + \beta_k \dot{W}_t^{(\ell;b)} \partial_{b_k}\right),
\]
where $\alpha_k,\beta_k \in \R \neq 0$ for $k\in \mathcal{K}\subseteq \Z^2_{+,N}$ and $X_0^\ep(w) = B(w,w) + \ep A w$. In $\partial_{w_\ell}, \overline{\partial}_{w_\ell}$ coordinates it takes the form
\[
	X_0^\ep(w) = \sum_{\ell\in \Z^2_{+,N}}(B_{\ell}(w,w) - \ep |\ell|^2w_\ell)\partial_{w_\ell} + (\overline{B_\ell(w,w)} - \ep|\ell|^2 \overline{w_\ell})\overline{\partial}_{w_\ell},
\]
where
\begin{equation}\label{eq:complex-Euler}
	B_{\ell}(w,w) = \frac{1}{2}\sum_{j+k=\ell}c_{j,k}w_jw_k,
\end{equation}
with the sum over all $j,k\in \Z^2_{0,N}$ such that $j+k=\ell$. Due the reality constraint $w_{-\ell} = \overline{w}_{\ell}$ we find it convenient to index the basis vectors on the full lattice $\mathbb{Z}^2_0$ via
\[
	\partial_{w_\ell} := \begin{cases}
	\partial_{w_\ell} & \ell\in \Z^2_{+,N}\\
	\overline{\partial}_{w_{-\ell}} & \ell\in \Z^2_{-,N}
	\end{cases},
\]
where $\Z^2_- = -\Z^2_+$. Combining this with the reality constraint on $B_{-\ell}(w,w) = \overline{B_\ell(w,w)}$, we can write $X_0$ in a more succinct notation involving a sum over the full lattice
\[
	X_0^\ep(w) = \sum_{\ell\in \Z^2_0} (B_{\ell}(w,w) - \ep|\ell|^2w_\ell)\partial_{w_\ell}.
\]
We note that for any $w\in \C^{\Z^2_{0,N}}$, satisfying $w_{-\ell} = \overline{w}_{\ell}$, that $\partial_{w_\ell}$ as defined above has the property that for each $\ell,i\in \Z^2_{0,N}$, $\partial_{w_{\ell}}$ behave as Wirtinger derivatives, satisfying
\[
\partial_{w_\ell} w_{i} = \delta_{i=\ell}.
\]
From this, we can easily obtain simple expressions for the brackets
\[
	[\partial_{w_k},X_0^\ep(w)] = \sum_{j\in\Z^2_{0,N}}\1_{\Z^2_{0,N}}(j+k)c_{j,k}w_j\partial_{w_{j+k}} - \ep |k|^2\partial_{w_k},
\]
and
\[
	[\partial_{w_{k_1}}, [ \partial_{w_{k_2}}, X_0^\ep(w)]] = \1_{\Z^2_{0,N}}(k_1+k_2)c_{k_1,k_2}\partial_{w_{k_1+k_2}}.
\]

Our goal is to prove the following:
\begin{proposition}\label{prop:SNS-bracket-theorem}
Let $\mathcal{X}=\{\partial_{a_k},\partial_{b_k}\,:\,k\in\mathcal{K}\}$, where $\mathcal{K}\subseteq \Z^{2}_{+,N}$ satisfies Assumption \ref{ass:hypo}. 
Then $\mathrm{Lie}(X_0^\ep;\mathcal{X})_\C$ contains the constant vector fields $\{\partial_{w_k}\,:\, k\in\Z^2_{0,N}\}$ and moreover, it follows from Lemma \ref{lem:complex-bracket-gen} that $X_0^\ep,\mathcal{X}$ satisfies the uniform parabolic H\"ormander condition on $\C^{\Z^2_{+,N}}$ viewed as a real manifold. 
\end{proposition}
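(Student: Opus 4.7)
The plan is to exploit the Wirtinger-bracket formulas recorded immediately before the proposition and reduce the spanning question to a straightforward induction driven by Assumption \ref{ass:hypo}.

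First, since $\partial_{w_k} = \tfrac12(\partial_{a_k}-i\partial_{b_k})$ and $\overline{\partial}_{w_k} = \partial_{w_{-k}} = \tfrac12(\partial_{a_k}+i\partial_{b_k})$, every $\partial_{w_k}$ with $k\in \mathcal{K}\cup(-\mathcal{K}) = \mathcal{Z}^0$ is a complex linear combination of two vector fields in $\mathcal{X}$, and hence already lies in $\mathrm{Lie}(X_0^\ep;\mathcal{X})_\C$. By the same complex combinations, $[\partial_{w_k}, X_0^\ep]$ for $k\in \mathcal{Z}^0$ belongs to the zero-time generating set. The key workhorse is then the double-bracket identity stated in the excerpt,
\[
[\partial_{w_{k_1}}, [\partial_{w_{k_2}}, X_0^\ep]] \;=\; \mathbf{1}_{\Z^2_{0,N}}(k_1+k_2)\, c_{k_1,k_2}\, \partial_{w_{k_1+k_2}}.
\]
Note that the $\eps$-dependent diagonal term $-\ep|k_2|^2\partial_{w_{k_2}}$ inside $[\partial_{w_{k_2}},X_0^\ep]$ commutes with the constant field $\partial_{w_{k_1}}$ and so drops out; the resulting scalar is $\ep$-independent.

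The induction now runs along the filtration $\mathcal{Z}^0\subseteq \mathcal{Z}^1\subseteq\cdots$ of Assumption \ref{ass:hypo}. Suppose inductively that $\partial_{w_{k_1}}\in \mathrm{Lie}(X_0^\ep;\mathcal{X})_\C$ for some $k_1\in \mathcal{Z}^{n-1}$, and let $k'\in \mathcal{Z}^0$ be such that $c_{k_1,k'}\neq 0$ and $k_1+k'\in \Z^2_{0,N}$ (the defining property of $\mathcal{Z}^n$). Since $[\partial_{w_{k'}},X_0^\ep]$ is in the zero-time generating set, the identity above yields $\partial_{w_{k_1+k'}}$ up to the non-zero scalar $c_{k_1,k'}$, placing it in the ideal. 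Iterating over $n$, hypoellipticity of $\mathcal{K}$ in the sense of Assumption \ref{ass:hypo} produces $\partial_{w_k}$ for every $k\in \Z^2_{0,N}$.

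Finally, the constant vector fields $\{\partial_{w_k}\}_{k\in \Z^2_{0,N}}$ pointwise span $T_w^{1,0}\C^{\Z^2_{+,N}}\oplus T_w^{0,1}\C^{\Z^2_{+,N}}$ at every $w$ (those with $k\in \Z^2_{+,N}$ span the holomorphic part and the rest span the anti-holomorphic part), so Lemma \ref{lem:complex-bracket-gen} delivers the parabolic H\"ormander condition on $\C^{\Z^2_{+,N}}$ viewed as a real manifold. Uniformity in $\ep$ is automatic: the spanning vector fields are constant, independent of $\ep$, so the quantities $K_0,K_n$ of Definition \ref{def:UniHormander} can be taken $\ep$-independent on any bounded $U$. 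The only place where care is needed is bookkeeping the zero-time ideal structure through the induction and verifying the double-bracket identity; beyond that, the proposition is essentially a dictionary translation of Assumption \ref{ass:hypo} into a Lie-algebraic statement via the complexification of Section \ref{subsec:SNS-app}.
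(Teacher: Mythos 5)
Your proposal is correct and follows essentially the same route as the paper: express $\partial_{w_k}$ for $k\in\mathcal{K}\cup(-\mathcal{K})$ as complex combinations of $\partial_{a_k},\partial_{b_k}$, use the $\ep$-independent double-bracket identity $[\partial_{w_{k_1}},[\partial_{w_{k_2}},X_0^\ep]]=\1_{\Z^2_{0,N}}(k_1+k_2)c_{k_1,k_2}\partial_{w_{k_1+k_2}}$, iterate along the $\mathcal{Z}^n$ filtration of Assumption \ref{ass:hypo}, and conclude via Lemma \ref{lem:complex-bracket-gen} with uniformity coming from the constancy of the spanning fields. Your bookkeeping of the induction (bracketing a previously obtained constant field $\partial_{w_{k_1}}$, $k_1\in\mathcal{Z}^{n-1}$, against the generator $[\partial_{w_{k'}},X_0^\ep]$, $k'\in\mathcal{Z}^0$) is, if anything, slightly more explicit than the paper's.
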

\begin{proof}
Since for a given $\ell \in \Z^2_{+,N}$, $\partial_{w_\ell}$ and $\partial_{w_{-\ell}}= \overline{\partial}_{w_\ell}$ are complex linear combinations of $\partial_{a_\ell}$ and $\partial_{b_\ell}$ for $\ell \in\Z^2_{+,N}$ it suffices to take brackets with respect to $\partial_{w_\ell}$ for all $\ell\in \mathcal{K}\cup\{-\mathcal{K}\}\subseteq \Z^2_{0,N}$. 
Therefore for $k_1,k_2\in \mathcal{K}\cup\{-\mathcal{K}\}$ we have
\[
	[\partial_{w_{k_1}},[\partial_{w_{k_2}},X_0]] = 1_{\Z^2_{0,N}}(k_1+k_2)c_{k_1,k_2}\partial_{w_{k_1+k_2}};
\]
as this is independent of $\eps$, it is clear that spanning will imply uniform spanning. 
If $c_{k_1,k_2}\neq 0$ we conclude that $\partial_{w_{k_1+k_2}} \in \mathrm{Lie}(X_0;\mathcal{F})_\C$.

It becomes clear we need the following iteration, defining $\mathcal{Z}_0 = \mathcal{K} \cup\{-\mathcal{K}\}$
\begin{align*}
\mathcal{Z}_n = \set{\ell + j : j \in \mathcal{Z}_0, \, \ell \in \mathcal{Z}_{n-1} \textup{ such that } c_{\ell,j} \neq 0 }
\end{align*}
By Assumption \ref{ass:hypo}, this iteration continues to generate all of $\mathbb Z_{0,N}^2$ which implies that $\{\partial_{w_k}\}_{k\in\Z^2_{0,N}} \subseteq \mathrm{Lie}(X_0;\mathcal{F})_\C$ and therefore, since $T^{0,1}\C^{\Z^2_{0,N}} \simeq T^{0,1}\C^{\Z^2_{+,N}}\oplus T^{1,0}\C^{\Z^2_{+,N}} \subseteq \mathrm{Lie}_z(X_0;\mathcal{F})_\C$, the theorem is proved. 
\end{proof}

\subsection{Projective spanning on $\C^n$}

When the manifold is $\C^n$ we will also find it useful to complexify the tangent space to show projective hypoellipticity. 
Let $V$ be a real vector space and recall that for a given vector space $W$ (real or complex) the space $\mathfrak{sl}(W)$ is the Lie algebra of linear endomorphisms $H$ of $W$ with $\tr H = 0$ (note this is independent of basis) and Lie bracket given by the commutator
\[
	[A,B] = AB - BA.
\]
Note that any endomorphism $H$ of $V$ can be trivially extended to an endomorphism of the complexification $V\tensor \C$ via $H(v_1+iv_2) = Hv_1 + i Hv_2$, moreover any $G\in \mathfrak{sl}(V\tensor \C)$ can be written as $G = G_1 + i G_2$, where $G_1,G_2\in\mathfrak{sl}(V)$, so that we have $\mathfrak{sl}(V\tensor \C) = \mathfrak{sl}(V)\tensor \C$, i.e. $\mathfrak{sl}(V)$ is a {\em real form} for $\mathfrak{sl}(V\tensor \C)$. We denote the Lie algebra of endomorphisms generated by any collection $\mathcal{H}\subseteq \mathfrak{sl}(V)$  by
\[
	\mathrm{Lie}(\mathcal{H}) = \Span\{\mathrm{ad}(H^{r})\ldots\mathrm{ad}(H^{2})H^{1}
	\,:\, H_i\in \mathcal{H},\,r\in \N\}.
\]
Likewise, define $\mathrm{Lie}(\mathcal{H})_\C$ as above with $\mathrm{span}_\C$ and $\mathcal{H}$ extended to $\mathfrak{sl}(V\tensor\C)$. The next result follows easily from Lemma \ref{lem:complex-span} and the bilinearity of $X,Y\mapsto \mathrm{ad}(X)Y$.
\begin{proposition}\label{prop:sln-span}
Let $V$ be a real vector space and $\mathcal{H}\subseteq \mathfrak{sl}(V)$, then $\mathrm{Lie}(\mathcal{H}) = \mathfrak{sl}(V)$ if and only if $\mathrm{Lie}(\mathcal{H})_{\C} = \mathfrak{sl}(V)\tensor \C$.

\end{proposition}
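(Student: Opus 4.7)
The plan is to reduce the proposition directly to Lemma \ref{lem:complex-span} by observing that the Lie algebra $\mathrm{Lie}(\mathcal{H})$ is, by construction, the real span of a distinguished collection of vectors sitting inside the real vector space $\mathfrak{sl}(V)$, and similarly $\mathrm{Lie}(\mathcal{H})_\C$ is the complex span of the same collection. Once this is made precise, the equivalence becomes a tautology via the complex-vs-real spanning lemma applied with the ambient vector space $\mathfrak{sl}(V)$ playing the role of $V$ in Lemma \ref{lem:complex-span}.

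The first step is to fix, for each $m \geq 1$, the collection of iterated brackets
\[
    \mathcal{B}^m := \set{\mathrm{ad}(H^r)\cdots\mathrm{ad}(H^2) H^1 : H^1,\dots,H^r \in \mathcal{H},\; 1 \leq r \leq m}.
\]
Since $\mathcal{H} \subseteq \mathfrak{sl}(V)$ and the bracket $[A,B] = AB - BA$ on $\mathfrak{sl}(V \otimes \C)$ restricts to the bracket on $\mathfrak{sl}(V)$ (the commutator of two real-entry traceless endomorphisms is again real and traceless), every element of $\mathcal{B}^m$ is an element of $\mathfrak{sl}(V)$ and is the same whether computed in $\mathfrak{sl}(V)$ or in its complexification. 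Thus $\mathrm{Lie}(\mathcal{H}) = \Span_{\R}\set{\mathcal{B}^m : m \geq 1}$ and $\mathrm{Lie}(\mathcal{H})_\C = \Span_{\C}\set{\mathcal{B}^m : m \geq 1}$, with the same generating set $\bigcup_m \mathcal{B}^m$ viewed as a subset of the real vector space $\mathfrak{sl}(V)$ (in the first case) or of its complexification $\mathfrak{sl}(V) \otimes \C \cong \mathfrak{sl}(V \otimes \C)$ (in the second).

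The second step is to apply Lemma \ref{lem:complex-span} with the choices $V \rightsquigarrow \mathfrak{sl}(V)$ and $\set{v_k} \rightsquigarrow \bigcup_m \mathcal{B}^m$. The lemma gives that this collection spans $\mathfrak{sl}(V)$ over $\R$ if and only if it spans $\mathfrak{sl}(V) \otimes \C$ over $\C$. Rewriting in terms of the Lie algebras constructed above, this is precisely the statement $\mathrm{Lie}(\mathcal{H}) = \mathfrak{sl}(V)$ if and only if $\mathrm{Lie}(\mathcal{H})_\C = \mathfrak{sl}(V) \otimes \C$.

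There is no real obstacle here; the only thing to be careful about is verifying that the $\R$-bilinearity of the bracket ensures the generating set is genuinely the same object on both sides of the complexification (in particular, that allowing complex coefficients does not produce new iterated brackets that were not already real linear combinations of elements of $\bigcup_m \mathcal{B}^m$). This follows immediately from expanding $\mathrm{ad}(\alpha H + \beta H')(K) = \alpha \,\mathrm{ad}(H) K + \beta\,\mathrm{ad}(H') K$ for $\alpha,\beta \in \C$ and $H,H',K \in \mathfrak{sl}(V)$, so that a complex-linear combination of $H$'s acting by $\mathrm{ad}$ produces the same complex-linear combination of the outputs.
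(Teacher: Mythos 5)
Your proof is correct and follows essentially the same route the paper indicates: the paper simply remarks that the proposition ``follows easily from Lemma \ref{lem:complex-span} and the bilinearity of $X,Y\mapsto \mathrm{ad}(X)Y$,'' which is exactly what you carry out, applying Lemma \ref{lem:complex-span} with $\mathfrak{sl}(V)$ as the ambient real space and the (common, real) set of iterated brackets as the spanning collection, with bilinearity of $\mathrm{ad}$ guaranteeing that complexification does not enlarge that generating set. No gaps; this is just a fleshed-out version of the paper's argument.
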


In light of the linearity of the mapping $X\mapsto M_X(z)$ we have the following property of the Lie algebra of endomorphisms induced by $\mathrm{Lie}(\mathcal{F})_\C$ for some collection $\mathcal{F}\in \mathfrak{X}(\C^n)$.
\[
	\mathfrak{m}_z(\mathcal{F})_\C := \{M_X(z) \,:\, X\in \mathrm{Lie}(\mathcal{F})_\C\,,\, X(z) =0\}.
\]
\begin{corollary}\label{cor:mx-span}
Let $\mathcal{F}\subseteq \mathfrak{X}(\C^n)$, then for each $z\in \C^n$ we have $\mathfrak{m}_z(\mathcal{F}) = \mathfrak{sl}(T_z\C^n)$ if and only if $\mathfrak{m}_z(\mathcal{F})_\C = \mathfrak{sl}(T_z\C^n)\tensor\C$.
In particular, the lifts $\widetilde{\mathcal{F}}$ satisfies H\"ormander's condition on $\S \C^n$ if $\mathfrak{m}_z(\mathcal{F})_\C = \mathfrak{sl}(T_z\C^n)\tensor \C$ and $\mathcal{F}$ satisfies 
H\"ormander's condition on $\C^n$.  
\end{corollary}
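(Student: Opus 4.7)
The plan is to deduce both assertions by first showing that the set $\mathfrak{m}_z(\mathcal{F})_\C$ is literally the complexification of the real Lie sub-algebra $\mathfrak{m}_z(\mathcal{F})$, and then quoting the already-established complex/real spanning equivalence. The only nontrivial piece is the identification; the rest is an immediate application of prior results.

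For the identification, I would first argue that $\mathrm{Lie}(\mathcal{F})_\C = \mathrm{Lie}(\mathcal{F}) \otimes \C$ as complex vector spaces. Since $\mathcal{F}$ consists of real vector fields and the Lie bracket of real vector fields is real, every iterated bracket of elements of $\mathcal{F}$ lies in $\mathrm{Lie}(\mathcal{F})$, so a general element of $\mathrm{Lie}(\mathcal{F})_\C$ can be written as $X + iY$ with $X,Y \in \mathrm{Lie}(\mathcal{F})$. Next, for such an element, evaluation at $z$ gives $(X+iY)(z) = X(z) + i Y(z) \in T_z\C^n \otimes \C$, and this vanishes if and only if $X(z) = Y(z) = 0$ separately (real and imaginary components in the canonical real basis of $T_z\C^n$). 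Finally, the assignment $X \mapsto M_X(z) = \nabla X(z) - \tfrac{1}{n}\Div X(z)\,\Id$ is $\R$-linear and therefore extends $\C$-linearly, giving $M_{X+iY}(z) = M_X(z) + i M_Y(z)$. Combining these three facts yields
\[
\mathfrak{m}_z(\mathcal{F})_\C = \{M_X(z) + i M_Y(z) : X,Y \in \mathrm{Lie}(\mathcal{F}),\, X(z) = Y(z) = 0\} = \mathfrak{m}_z(\mathcal{F}) \otimes \C.
\]

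With this identification in hand, the equivalence $\mathfrak{m}_z(\mathcal{F}) = \mathfrak{sl}(T_z\C^n) \iff \mathfrak{m}_z(\mathcal{F})_\C = \mathfrak{sl}(T_z\C^n) \otimes \C$ follows directly from Lemma \ref{lem:complex-span} applied with $V = \mathfrak{sl}(T_z\C^n)$ (equivalently, it is the trivial observation that a real subspace of a real vector space coincides with the whole space iff its complexification does). For the ``in particular'' statement, I would simply invoke Proposition \ref{prop:class-proj-span}, which asserts that H\"ormander's condition for the lifts $\widetilde{\mathcal{F}}$ on $\S \C^n$ is implied by H\"ormander's condition for $\mathcal{F}$ on $\C^n$ together with $\mathfrak{m}_z(\mathcal{F}) = \mathfrak{sl}(T_z\C^n)$ at every $z$; by the first assertion, the latter is equivalent to its complexified counterpart.

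I do not expect any substantive obstacle here: the only thing that needs care is ensuring that the splitting $X + iY \in \mathrm{Lie}(\mathcal{F})_\C$ with real parts in $\mathrm{Lie}(\mathcal{F})$ is genuine rather than up to isomorphism, and that both the evaluation map $X \mapsto X(z)$ and the trace-corrected derivative $X \mapsto M_X(z)$ commute with taking real and imaginary parts, which is immediate from their $\R$-linearity. The corollary is really a bookkeeping statement packaging Lemma \ref{lem:complex-span}, Proposition \ref{prop:sln-span}, and Proposition \ref{prop:class-proj-span} into a form convenient for the Galerkin--Navier--Stokes application in the next section.
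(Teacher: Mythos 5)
Your proposal is correct and follows essentially the same route the paper intends: the $\R$-linearity of $X \mapsto M_X(z)$ together with the fact that elements of $\mathrm{Lie}(\mathcal{F})_\C$ split as $X+iY$ with $X,Y\in\mathrm{Lie}(\mathcal{F})$ identifies $\mathfrak{m}_z(\mathcal{F})_\C$ with $\mathfrak{m}_z(\mathcal{F})\otimes\C$, after which Lemma \ref{lem:complex-span} (equivalently Proposition \ref{prop:sln-span}) gives the equivalence and Proposition \ref{prop:class-proj-span} gives the ``in particular'' clause. No gaps.
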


\begin{remark}
Corollary \ref{cor:mx-span} is useful in the sense that it allows one to work directly with $\mathfrak{m}_x(X_0;\mathcal{F})_\C$ therefore consider matrices $\nabla X(x)$ in $\partial_{z_k}$, $\overline{\partial}_{z_k}$ coordinates, which often take a much simpler form than their counterparts in $\partial_{a_k},\partial_{b_k}$ coordinates.
\end{remark}

\subsection{A sufficient condition for projective hypoellipticity for Navier-Stokes}

In this section, we consider a sufficient condition for projective hypoellipticity for the Navier-Stokes equation in terms of a real matrix Lie algebra obtained by working in complex coordinates. These matrices take on a particularly simple form that allow the problem to be made much more tractable which is crucial for the arguments that follow.

Following the set-up of section \ref{subsec:SNS-app}, we define the Navier-Stokes vector field on the complexified tangent space $T_w\C^{\Z^2_{+,N}} \otimes \C$
\[
	X_0^\ep(w) := \sum_{\ell\in\Z^2_0} \left(B_\ell(w,w) - \ep|\ell|^2w_\ell\right) \partial_{w_\ell},
\]
where we recall that $w_{-\ell} = \overline{w}_\ell$ and that we have defined for $\ell\in\Z^2_0$
\[
	\partial_{w_\ell} := \begin{cases}
	\partial_{w_\ell} & \ell\in \Z^2_+\\
	\overline{\partial}_{w_{-\ell}} & \ell\in \Z^2_-
	\end{cases}.
\]
As in the set up of Proposition \ref{prop:SNS-bracket-theorem}, we assume that we have vector fields $\{\partial_{w_k}\}_{k\in \mathcal{K} \cup -\mathcal{K}}$, where $\mathcal{K}\subset \Z^2_{0,N}$ generates $\Z^2_{0,N}$ in the sense of Assumption \ref{ass:hypo}. 
By Proposition \ref{prop:SNS-bracket-theorem}, we have that $\mathrm{Lie}(X_0;\{\partial_{w_k}\}_{k\in\mathcal{K}})_\C$, contains the constant vector fields $\{\partial_{w_k}\}_{k\in \Z^2_{0,N}}$, and therefore any vector field $X\in \mathrm{Lie}(X_0;\{\partial_{w_k}\}_{k\in\mathcal{K}})_\C$ can always be shifted by a constant vector field
\[
\hat{X} = X- X(z)
\]
so that $\hat{X}(z) = 0$ and $\nabla \hat{X} = \nabla X$. Additionally, by Corollary \ref{cor:mx-span}, and the fact that $B(w,w)$ is bilinear and $\Delta w$ is linear, this implies that for each $k\in\Z^2_{0,N}$, the endomorphism
\[
	H^k := \nabla [\partial_{w_k},X_0^\ep] = \partial_{w_k}\nabla B,
\]
belongs to $\mathfrak{m}_w(X_0;\{\partial_{w_k}\}_{k\in\mathcal{K}})_\C$. 
Moreover due to the bilinear nature of $B(w,w)$, each $H^k$ is constant and independent of $\ep$.

The following Lemma gives an explicit matrix representation of $H^k$ in $\partial_{w_k}$ coordinates as a $|\Z^2_{0,N}| = (2N+1)^2-1$ dimensional square matrix indexed over $\Z^2_{0,N}$. This simple form comes from the convenient form of the nonlinearity in complex variables (see Section \ref{sec:GNSE}). 
\begin{lemma}\label{lem:Hk-Gk-form}
For each $k\in \Z^2_{0,N}$ we have the following formula for $H^k$ in $\partial_{w_\ell}$ coordinates by
\begin{equation}\label{eq:Hk-Def}
(H^k)_{\ell,j} = c_{j,k}\delta_{k+j=\ell}, \quad \ell,j\in \Z^2_{0,N}.
\end{equation}
\end{lemma}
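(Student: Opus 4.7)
The plan is to compute $H^k$ directly from the identity $H^k = \partial_{w_k}\nabla B$, using the explicit form of the nonlinearity in complex Fourier coordinates. First, I would decompose $X_0^\ep$ into its bilinear and linear parts and note that the commutator with $\partial_{w_k}$ of the linear damping contributes $-\ep|k|^2\partial_{w_k}$, which is a \emph{constant} vector field and therefore has $\nabla \equiv 0$. This both confirms the claim (made just before the lemma) that $H^k$ is independent of $\ep$, and reduces the computation to the purely bilinear piece.

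Next, I would compute $\partial_{w_k}B_\ell(w,w)$ using the Wirtinger property $\partial_{w_\ell}w_i = \delta_{i=\ell}$ (with indices taken in the full lattice $\Z^2_{0,N}$) together with the symmetry $c_{j,j'} = c_{j',j}$, which follows at once from the definition of $c$ and the antisymmetry of $\langle\cdot^\perp,\cdot\rangle_r$. Applying the product rule to \eqref{eq:complex-Euler} and combining the two resulting sums yields
\[
\partial_{w_k}B_\ell(w,w) \;=\; \1_{\Z^2_{0,N}}(\ell-k)\,c_{k,\ell-k}\,w_{\ell-k},
\]
which is linear in $w$, consistent with the fact that $\partial_{w_k}\nabla B$ should be a constant matrix.

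Finally, to read off the matrix entries of $H^k$ in the $\{\partial_{w_\ell}\}$ basis, I would differentiate once more in the $w_j$ direction:
\[
(H^k)_{\ell,j} \;=\; \partial_{w_j}\partial_{w_k}B_\ell(w,w) \;=\; c_{k,j}\,\delta_{k+j=\ell} \;=\; c_{j,k}\,\delta_{k+j=\ell},
\]
where I again invoke the symmetry of $c$. This is exactly \eqref{eq:Hk-Def}. There is no substantive analytic obstacle here: the lemma is essentially a bookkeeping exercise whose content is that the bilinearity of $B$, together with the diagonal form of $\nabla B$ in Fourier coordinates, forces $H^k$ to act as a shift by $k$ weighted by the scalar $c_{j,k}$. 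The only care required is in tracking the lattice indicator $\1_{\Z^2_{0,N}}$ so that only modes with $k+j\in \Z^2_{0,N}$ contribute.
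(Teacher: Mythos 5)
Your proposal is correct and follows essentially the same route as the paper, which states the lemma without a separate proof because it is read off directly from the bracket formula $[\partial_{w_k},X_0^\ep(w)] = \sum_{j}\1_{\Z^2_{0,N}}(j+k)c_{j,k}w_j\partial_{w_{j+k}} - \ep|k|^2\partial_{w_k}$ computed in Section \ref{subsec:SNS-app}: the constant damping term has vanishing gradient and the linear term's gradient gives \eqref{eq:Hk-Def}. Your explicit use of the Wirtinger property, the product rule on \eqref{eq:complex-Euler}, and the symmetry $c_{j,k}=c_{k,j}$ is exactly the bookkeeping underlying that display, so there is nothing missing.
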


Note that in $\{\partial_{w_k}\}$ coordinates, the matrices $H^k$ are {\em real} matrices, and therefore we only need them to generate an appropriate Lie algebra of real matrices in order for them to span the complexified space by Corollary \ref{cor:mx-span}. 

Below we record a sufficient condition for projective spanning in the Galerkin-Navier-Stokes system in terms of the Lie algebra generated by the $H^k$ matrices.
\begin{proposition}\label{prop:sufficient-projectspan-NS}
Let $\{H^k\}:= \{H^k\,:\, k\in Z^2_{0,N}\}$ 
be the matrices defined by \eqref{eq:Hk-Def} in $\partial_{w_k}$ coordinates.
Then the lifts $\widetilde{X}_0^\ep$, $\{\widetilde{\partial}_{a_k},\widetilde{\partial}_{b_k}\,:\,k\in\mathcal{K}\}$ satisfy the uniform parabolic H\"ormander condition on $\S \C^{\Z^2_{+,N}}$ if
\[
	\mathrm{Lie}(\{H^k\}) = \mathfrak{sl}_{\Z^2_{0,N}}(\R), 
\]
where we use the notation $\mathfrak{sl}_{\Z^2_{0,N}}(\R)$ to denote the Lie algebra of real, trace-free matrices indexed by $\Z^2_{0,N}$. 
\end{proposition}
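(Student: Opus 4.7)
The plan is to apply Proposition \ref{prop:class-proj-span} to the Galerkin--Navier--Stokes vector fields $X_0^\ep$ and $\mathcal{X}$. The base parabolic H\"ormander condition on $M = \C^{\Z^2_{+,N}}$ is already supplied (uniformly in $\ep$) by Proposition \ref{prop:SNS-bracket-theorem}, so it remains only to verify $\mathfrak{m}_w(X_0^\ep;\mathcal{X}) = \mathfrak{sl}(T_w M)$ for every $w$, again uniformly in $\ep$. By Corollary \ref{cor:mx-span} and Proposition \ref{prop:sln-span}, this is equivalent to showing that $\mathfrak{m}_w(X_0^\ep;\mathcal{X})_\C$ contains the real matrix Lie algebra generated by all the $H^k$'s (written in the $\partial_{w_k}$ basis). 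Because $B(w,w)$ is bilinear and $Aw$ linear, each $H^k$ is a constant matrix independent of $\ep$, so uniformity will be automatic. The hypothesis gives $\mathrm{Lie}(\{H^k\}) = \mathfrak{sl}_{\Z^2_{0,N}}(\R)$; since $\mathfrak{m}_w$ is itself a Lie subalgebra, the work reduces to producing, for each $k \in \Z^2_{0,N}$, an element of the zero-time ideal $\mathrm{Lie}(X_0^\ep;\mathcal{X})_\C$ that vanishes at $w$ and has traceless gradient $H^k$.

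For the easy case $k \in \mathcal{K}\cup(-\mathcal{K})$, the vector field $Y_k := [\partial_{w_k},X_0^\ep]$ is a direct generator of the zero-time ideal. Since $X_0^\ep$ is linear-plus-quadratic, $Y_k$ is affine in $w$, its traceless gradient equals the constant matrix $H^k$ from Lemma \ref{lem:Hk-Gk-form} (the divergence term drops because $\Div B = 0$), and its value $Y_k(w)$ is a linear combination of basis constants $\partial_{w_\ell}$, all of which sit in $\mathrm{Lie}(X_0^\ep;\mathcal{X})_\C$ by Proposition \ref{prop:SNS-bracket-theorem}. Subtracting gives $\hat Y_k := Y_k - Y_k(w) \in \mathrm{Lie}(X_0^\ep;\mathcal{X})_\C$, vanishing at $w$, with $M_{\hat Y_k}(w) = H^k$, so $H^k \in \mathfrak{m}_w(X_0^\ep;\mathcal{X})_\C$.

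The main obstacle is extending this to general $k \in \Z^2_{0,N}$: the zero-time ideal does not naively contain $[\partial_{w_k},X_0^\ep]$ when $k \notin \mathcal{K}\cup(-\mathcal{K})$, even though $\partial_{w_k}$ itself is in the ideal by Proposition \ref{prop:SNS-bracket-theorem}. The resolution is to express $\partial_{w_k}$ as an iterated bracket $c_{j_1,j_2}^{-1}[\partial_{w_{j_1}},[\partial_{w_{j_2}},X_0^\ep]]$ along a chain $k = j_1 + j_2$ provided by Assumption \ref{ass:hypo} with $c_{j_1,j_2} \neq 0$, and then to apply repeated Jacobi identities together with the polynomial degree-$2$ structure of $X_0^\ep$. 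Because $\nabla X_0^\ep(w) = -\ep A + \sum_\ell w_\ell H^\ell$ is affine in $w$ with constant coefficient matrices $H^\ell$, Jacobi expansions trade any ``forbidden'' bracket with $X_0^\ep$ for admissible brackets among the $Y_j$'s, constants $\partial_{w_j}$, and their iterates, ultimately producing a $Z \in \mathrm{Lie}(X_0^\ep;\mathcal{X})_\C$ with $Z(w) = 0$ and $M_Z(w) = H^k$. Once all $H^k$ for $k \in \Z^2_{0,N}$ are shown to lie in $\mathfrak{m}_w(X_0^\ep;\mathcal{X})_\C$, the hypothesis closes the argument and yields the uniform parabolic H\"ormander condition on $\S \C^{\Z^2_{+,N}}$.
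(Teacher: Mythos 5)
Your reduction framework is the same as the paper's (base H\"ormander condition from Proposition \ref{prop:SNS-bracket-theorem}, reduction via Corollary \ref{cor:mx-span} and Proposition \ref{prop:sln-span}, constancy and $\ep$-independence of the $H^k$ giving uniformity), and your treatment of forced modes $k\in\mathcal{K}\cup(-\mathcal{K})$ is exactly the paper's argument. The gap is in your treatment of the unforced modes. Note that every generator of $\mathrm{Lie}(X_0^\ep;\mathcal{X})_\C=\mathrm{Lie}(\mathcal{X},[\mathcal{X},X_0^\ep])_\C$ — the constants $\partial_{w_a}$ and the fields $Y_a=[\partial_{w_a},X_0^\ep]$, $a\in\mathcal{K}\cup(-\mathcal{K})$ — is an \emph{affine} vector field, and the bracket of two affine fields $w\mapsto Aw+c$ and $w\mapsto Bw+d$ is again affine with linear part the commutator $[B,A]$. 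Hence every element of this algebra is affine with linear part lying in $\mathrm{Lie}_\C\{H^a : a\in\mathcal{K}\cup(-\mathcal{K})\}$, and a field $Z$ in this algebra with $Z(w)=0$ and $M_Z(w)=H^k$ can exist only if $H^k$ already lies in that smaller matrix algebra. That is a nontrivial spanning claim which you do not prove and which is not part of the hypothesis (the hypothesis concerns $\mathrm{Lie}(\{H^k\}_{k\in\Z^2_{0,N}})$, not the algebra generated by the forced modes alone). Your proposed mechanism cannot supply it: applying Jacobi to $c_{j_1,j_2}[\partial_{w_k},X_0^\ep]=[[\partial_{w_{j_1}},Y_{j_2}],X_0^\ep]$ yields $[\partial_{w_{j_1}},[Y_{j_2},X_0^\ep]]-[Y_{j_2},[\partial_{w_{j_1}},X_0^\ep]]$, and the term $[Y_{j_2},X_0^\ep]$ is genuinely quadratic, hence outside the affine algebra; the Jacobi identity only relocates the double bracket with $X_0^\ep$, it never eliminates it. So the ``trade forbidden brackets for admissible ones'' step would fail.

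The paper closes this point differently, and the fix is available to you: by Remark \ref{rem:so-remark}'s companion remark after Definition \ref{def:PHormander} (i.e.\ [Proposition 5.10, \cite{BBPS20}]), $\mathrm{Lie}(X_0^\ep;\mathcal{X})$ coincides with the zero-time ideal of geometric control theory, which \emph{is} an ideal in the full algebra and in particular closed under $\mathrm{ad}(X_0^\ep)$. Since Proposition \ref{prop:SNS-bracket-theorem} puts $\partial_{w_k}$ in this ideal for every $k\in\Z^2_{0,N}$, the bracket $[\partial_{w_k},X_0^\ep]$ is in the ideal for every such $k$ with no further manipulation; it is affine with constant gradient $H^k$, and subtracting its value at $w$ (a combination of constant fields already in the ideal) exhibits $H^k\in\mathfrak{m}_w(X_0^\ep;\mathcal{X})_\C$ for all $k\in\Z^2_{0,N}$. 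From there your concluding steps (Lie subalgebra property, the hypothesis $\mathrm{Lie}(\{H^k\})=\mathfrak{sl}_{\Z^2_{0,N}}(\R)$, Proposition \ref{prop:sln-span}, Corollary \ref{cor:mx-span}, and uniformity) are exactly as in the paper.
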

\begin{proof}
By the above discussion regarding Proposition \ref{prop:SNS-bracket-theorem}, we have that $\{H^k\}$ viewed as linear endomorphisms satisfy 
\[
\{H^k\} \subset \mathfrak{m}_w(X_0;\{\partial_{w_k}\}_{k\in\mathcal{K}\cup(-\mathcal{K})})_\C \subseteq \mathfrak{sl}(T_{w}\C^{\Z^2_{+,N}})\tensor \C.
\]
If the corresponding real matrix Lie algebra $\mathrm{Lie}(\{H^k\})$ represented in $\partial_{w_k}$ coordinates is equal to $\mathfrak{sl}_{\Z^2_{0,N}}(\R)$, then by Proposition \ref{prop:sln-span} it is clear that the complexified algebra of endomorphisms satisfies $\mathrm{Lie}(\{H^k\})\tensor \C = \mathfrak{sl}(T_{w}\C^{\Z^2_{+,N}})\tensor \C$ and therefore 
\[
	\mathfrak{m}_w(X_0;\{\partial_{w_k}\}_{k\in\mathcal{K}\cup(-\mathcal{K})})_\C = \mathfrak{sl}(T_{w}\C^{\Z^2_{+,N}})\tensor \C.
\] 
Moreover, since $\{H^k\}$ are constant matrices, this equality is {\em uniform} in $w$.
It follows by Corollary \ref{cor:mx-span} that $\widetilde{X}_0^\ep$, $\{\widetilde{\partial}_{a_k},\widetilde{\partial}_{b_k}\,:\,k\in\mathcal{K}\}$ satisfy the uniform parabolic H\"ormander condition on $\S \C^{\Z^2_{+,N}}$.
\end{proof}

\begin{remark}
It is important to note that each matrix $H^k$ has a banded structure, with non-zero entries occurring on the band $\ell -j = k$ (except when $c_{j,k} = 0$). This banded structure is a consequence of the non-local frequency coupling present in non-linearity. Such non-local interactions provide a significant challenge when studying the $\mathrm{Lie}(\{H^k\})$ and are what make projective spanning for Navier-Stokes and other PDEs so challenging compared to locally coupled models like Lorenz 96 or shell models like GOY or SABRA.
\end{remark}

\section{A dinstinctness condition in the diagonal algebra} \label{sec:diagonal}

In order to show that 
\[
\mathrm{Lie}(\{H^k\}) = \mathfrak{sl}_{\Z^2_{0,N}}(\R),
\]
for Proposition \ref{prop:sufficient-projectspan-NS}, a special role will be played by a certain diagonal subalgebra $\mathfrak{h}$. Genericity properties of elements of this algebra, specifically related to distinctness of certain differences of diagonal elements, will play a crucial role in our ability to isolate elementary matrices, which is particularly challenging for the Navier-Stokes equations due to the non-local frequency interactions made explicit by the banded structure of the $H^k$ matrices.

\subsection{An illustrative example} \label{sec:ill}

Taking a page from the classical root space decomposition of semi-simple Lie algebras, we will make use of a strategy that utilizes the fact that elementary matrices are left invariant by adjoint action with a diagonal matrix. To fix ideas, we will first consider an idealized situation. Let $\mathbb{D}$ be any diagonal matrix in $\mathfrak{sl}_n(\R)$ with diagonal entries $\mathbb{D}_{ii} $ denoted by $\mathbb{D}_i$. It is well known and easily verifiable that for any elementary matrix $E^{i,j} = \delta_{ij}$ for the Kronecker delta (i.e. a matrix with a one in the $i$th row and $j$th column and zero elsewhere) one has
\[
	\mathrm{ad}(\mathbb{D})E^{i,j} = [\mathbb{D},E^{i,j}] = (\mathbb{D}_i - \mathbb{D}_j)E^{i,j},
\]
and therefore $E^{i,j}$ is an {\em eigenvector} of the operator $\mathrm{ad}(\mathbb{D})$ with eigenvalue $\mathbb{D}_i - \mathbb{D}_j$. This means that if $\mathbb{D}$ is suitably generic in the sense that it's diagonal entries have {\em distinct differences}
\[
	\mathbb{D}_i - \mathbb{D}_j \neq \mathbb{D}_{i\prime} - \mathbb{D}_{j^\prime}\quad \text{when } (i,j) \neq (i^\prime,j^\prime),
\]
then the operator $\mathrm{ad}(\mathbb{D})$ has simple eigenvalues. Such a distinctness property and associated simplicity of the spectrum gives a clear strategy for spanning sets of elementary matrices that generate $\mathrm{sl}_n(\R)$ using an approach similar to Krylov subspace methods for generating sets of linearly independent eigenvectors \cite{Arnoldi1951,Trefethen1997}. Specifically we have the following. 

\begin{proposition}\label{prop:simple-power-method}
Let $H$ be a matrix in $\mathfrak{sl}_{n}(\R)$ whose diagonal entries are zero $H_{ii} = 0$, and with at least one non-zero element away from the diagonal, 
\[
	\mathrm{supp}(H) := \{(i,j)\,:\, i\neq j\,, H_{ij}\neq 0\} \neq \emptyset.
\]
Suppose, in addition, that there is a diagonal matrix $\mathbb{D}$ whose diagonal entries $\mathbb{D}_i = \mathbb{D}_{ii}$ satisfy
\[
	\mathbb{D}_i - \mathbb{D}_j \neq \mathbb{D}_{i^\prime} - \mathbb{D}_{j^\prime},\quad  \text{ for each}\quad (i,j),(i^\prime,j^\prime) \in \mathrm{supp}(H), \quad (i,j)\neq (i^\prime,j^\prime).
\]
Then for $N = |\mathrm{supp}(H)|$, 
\[
	\mathrm{span}\{H,\mathrm{ad}(\mathbb{D})H,\mathrm{ad}(\mathbb{D})^2H\ldots,\mathrm{ad}(\mathbb{D})^{N-1}H\}, \quad 
\]
contains the elementary matrices $E^{i,j}$ for each $i,j\in \mathrm{supp}(H)$.
\end{proposition}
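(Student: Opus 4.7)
The plan is to exploit the observation already made just before the statement: every elementary matrix $E^{i,j}$ is an eigenvector of $\mathrm{ad}(\mathbb{D})$ with eigenvalue $\lambda_{i,j} := \mathbb{D}_i - \mathbb{D}_j$, so $\mathrm{ad}(\mathbb{D})$ acts diagonally in the elementary matrix basis. Since $H$ has zero diagonal, we may expand
\[
H = \sum_{(i,j) \in \mathrm{supp}(H)} H_{i,j} E^{i,j},
\]
and applying $\mathrm{ad}(\mathbb{D})$ repeatedly gives, for every $k \geq 0$,
\[
\mathrm{ad}(\mathbb{D})^k H = \sum_{(i,j) \in \mathrm{supp}(H)} H_{i,j}\, \lambda_{i,j}^{\,k}\, E^{i,j}.
\]
Thus the problem of isolating each $E^{i,j}$ reduces to inverting a finite linear system whose coefficient matrix is Vandermonde-type in the eigenvalues $\lambda_{i,j}$.

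Concretely, enumerate $\mathrm{supp}(H) = \{(i_1,j_1),\dots,(i_N,j_N)\}$ and set $\lambda_m := \lambda_{i_m,j_m}$. Then the $N$ vectors $\mathrm{ad}(\mathbb{D})^k H$ for $k=0,\dots,N-1$ are encoded by the Vandermonde matrix $V_{km} = \lambda_m^k$ acting on the vector with entries $H_{i_m,j_m}$. The distinctness hypothesis is exactly the statement that $\lambda_1,\dots,\lambda_N$ are pairwise distinct, so $V$ is invertible. Hence there exist scalars $c_0^{(m)},\dots,c_{N-1}^{(m)}$ such that
\[
\sum_{k=0}^{N-1} c_k^{(m)}\, \mathrm{ad}(\mathbb{D})^k H \;=\; H_{i_m,j_m}\, E^{i_m,j_m}.
\]
Since $(i_m,j_m) \in \mathrm{supp}(H)$ forces $H_{i_m,j_m} \neq 0$, dividing through yields $E^{i_m,j_m} \in \mathrm{span}\{H,\mathrm{ad}(\mathbb{D})H,\dots,\mathrm{ad}(\mathbb{D})^{N-1}H\}$ for each $m$, which is the claim.

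There is no real obstacle here beyond bookkeeping: the entire argument rests on the diagonalization of $\mathrm{ad}(\mathbb{D})$ on the span of elementary matrices and the classical invertibility of a Vandermonde matrix with distinct nodes. The only thing to be careful about is that the spanning set produced is literally the iterated adjoint orbit of $H$ under $\mathbb{D}$, so the argument stays inside $\mathrm{Lie}(\{H,\mathbb{D}\})$; this will matter later when the proposition is applied within $\mathrm{Lie}(\{H^k\})$, but for the statement itself nothing further is needed.
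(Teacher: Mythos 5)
Your proof is correct and follows essentially the same route as the paper: expand $H$ over its support in the elementary-matrix eigenbasis of $\mathrm{ad}(\mathbb{D})$, observe that the iterates $\mathrm{ad}(\mathbb{D})^k H$ are governed by a Vandermonde matrix in the eigenvalues $\lambda_{i,j}=\mathbb{D}_i-\mathbb{D}_j$, and use the distinctness hypothesis to invert it. The only cosmetic difference is that you explicitly solve for each $E^{i_m,j_m}$, whereas the paper phrases the same Vandermonde argument as full rank of the Krylov family, so the span coincides with $\mathrm{span}\{E^{i,j}: H_{i,j}\neq 0\}$.
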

\begin{proof}
Write 
\begin{align*}
H = \sum_{H_{ij} \neq 0} H_{ij}E^{ij}
\end{align*}
and $\lambda_{ij} = \D_{i} - \D_j$ be the corresponding eigenvalue of $\mathrm{ad}(\D)$. 
The Krylov subspace in question becomes 
\begin{align*}
\mathrm{span}\set{ \sum H_{ij}E^{ij}, \sum H_{ij} \lambda_{ij} E^{ij}, \ldots  \sum H_{ij} \lambda_{ij}^{N-1} E^{ij} }.  
\end{align*}
The linear independence of these vectors reduces to the invertibility of the Vandermonde matrix 
\begin{align*}
\begin{pmatrix}
1 & \lambda_{i_1 j_1} & \ldots & \lambda_{i_1 j_1}^{N-1} \\ 
\vdots  & \vdots & \ddots & \vdots \\ 
 1 & \lambda_{i_N j_N} & \ldots & \lambda_{i_N j_N}^{N-1}
\end{pmatrix}, 
\end{align*}
which follows by the assumption that $\lambda_{ij} \neq \lambda_{i'j'}$ for $(i,j) \neq (i',j')$. 
Hence, due to the full rank, the Krylov subspace coincides with $\mathrm{span}\set{E^{ij} : H_{ij} \neq 0}$. 
\end{proof}

\begin{remark}\label{rem:generating-set}
In the context of Lemma \ref{prop:simple-power-method}, it is important to note that one does not necessarily need $H$ to have all it's entries non-zero in order to show that $\mathrm{Lie}(\mathbb{D},H) = \mathfrak{sl}_n(\R)$. Indeed, a relatively small number of elementary matrices can easily generate $\mathfrak{sl}_n(\R)$. For instance it is readily seen that the elementary matrices
\[
	E^{1,2},E^{2,3},\ldots E^{n-1,n},E^{n,1}
\]
are sufficient to generate $\mathfrak{sl}_n(\R)$.
\end{remark}

\subsection{The diagonal subalgebra $\mathfrak{h}$}

The set of matrices $\{H^k\}$ defined in \eqref{eq:Hk-Def} does not contain any diagonal matrices, however, by commuting $H^k$ and $H^{-k}$, we obtain a diagonal algebra which we denote
\[
\mathfrak{h} := \Span\{[H^{k},H^{-k}]\,:\, k\in\Z^2_{0,N}\}.
\]
\begin{lemma}
For each $k\in \Z^2_{0,N}$, we have
\begin{equation}\label{eq:block-diag-matrix}
\mathbb{D}^k := [H^k,H^{-k}]
\end{equation}
is a diagonal matrix with diagonal entries for each $i \in \Z^2_{0,N}$ given by
\begin{equation}\label{eq:Diag-def}
	\mathbb{D}^k_i := c_{i,k}c_{i+k,k}\1_{\Z^2_{0,N}}(i+k) - c_{i,k}c_{i-k,k} \1_{\Z^2_{0,N}}(i-k),
\end{equation}
and therefore $\mathfrak{h} = \Span\{\mathbb{D}^k\}$ is a commutative Lie sub-algebra of $\mathfrak{sl}_{\Z^2_{0,N}}(\R)$.
\end{lemma}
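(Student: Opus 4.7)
The claim is purely a matrix computation using the explicit form of $H^k$ from \eqref{eq:Hk-Def}. The key structural observation is that $H^k$ has at most one nonzero entry per row and per column, realizing (up to scalar weights) a ``shift by $k$'' on the basis indexed by $\Z^2_{0,N}$. Consequently $H^k H^{-k}$ and $H^{-k} H^k$ shift then shift back, and so should land on the diagonal. The plan is to compute these two products entrywise, extract the diagonal, and then exploit the elementary symmetry
\[
c_{j,-k} = -c_{j,k},
\]
which follows immediately from $\langle j^\perp,-k\rangle_r = -\langle j^\perp,k\rangle_r$ and $|-k|_r = |k|_r$, in order to reach the stated formula.

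Concretely, using \eqref{eq:Hk-Def}, I would compute
\[
(H^k H^{-k})_{\ell,j} = \sum_{m\in \Z^2_{0,N}} c_{m,k}\,\delta_{m+k=\ell}\, c_{j,-k}\,\delta_{j-k=m},
\]
and observe that the two Kronecker deltas collapse the sum to the single index $m=\ell-k=j-k$, forcing $\ell=j$ and the constraint $\ell-k \in \Z^2_{0,N}$. This yields
\[
(H^k H^{-k})_{\ell,\ell} = c_{\ell-k,k}\,c_{\ell,-k}\,\1_{\Z^2_{0,N}}(\ell-k).
\]
Symmetrically, $(H^{-k}H^k)_{\ell,\ell} = c_{\ell+k,-k}\,c_{\ell,k}\,\1_{\Z^2_{0,N}}(\ell+k)$. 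Both matrices are therefore diagonal, and so is their difference $\mathbb{D}^k$.

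Next I would apply $c_{j,-k} = -c_{j,k}$ to both expressions and subtract, which turns the minus sign coming from the commutator into a plus on the second term and vice versa, giving
\[
\mathbb{D}^k_\ell = c_{\ell,k}\bigl(c_{\ell+k,k}\,\1_{\Z^2_{0,N}}(\ell+k) - c_{\ell-k,k}\,\1_{\Z^2_{0,N}}(\ell-k)\bigr),
\]
which is exactly \eqref{eq:Diag-def}. Finally, the two remaining assertions are immediate consequences: all $\mathbb{D}^k$ are diagonal matrices of the same size, hence pairwise commute, so $\mathfrak{h}=\Span\{\mathbb{D}^k\}$ is an abelian Lie subalgebra; and $\mathbb{D}^k = [H^k,H^{-k}]$ is a commutator, which has vanishing trace, so $\mathfrak{h}\subseteq \mathfrak{sl}_{\Z^2_{0,N}}(\R)$.

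There is no serious obstacle here. The only care required is bookkeeping the indicator functions $\1_{\Z^2_{0,N}}$, which encode the fact that the ``shift by $k$'' action on the truncated lattice can leave $\Z^2_{0,N}$ (and thus produce no contribution), together with correctly applying the sign flip $c_{j,-k}=-c_{j,k}$ so that the two diagonal contributions combine cleanly into the single prefactor $c_{\ell,k}$ appearing in \eqref{eq:Diag-def}.
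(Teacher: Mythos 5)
Your computation is correct: the entrywise evaluation of $H^kH^{-k}$ and $H^{-k}H^k$, the collapse of the Kronecker deltas forcing $\ell=j$ with the indicators $\1_{\Z^2_{0,N}}(\ell\mp k)$, the sign flip $c_{j,-k}=-c_{j,k}$, and the trace-zero/commutativity conclusions all check out and reproduce \eqref{eq:Diag-def} exactly. This is precisely the direct matrix computation the paper leaves implicit (the lemma is stated without proof there), so your argument matches the intended approach.
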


\begin{remark}\label{rem:infinite-algebra-diag}
It is important to note that the truncated lattice $\Z^2_{0,N}$ actually makes the form of $\mathbb{D}^k_i$ more complicated. Depending on the choice of $k$, the indicator functions $\1_{\Z^2_{0,N}}(i+k)$ and $\1_{\Z^2_{0,N}}(i-k)$ have non-trivial regions where they overlap and don't overlap, leading to significant complications in proofs that utilize computational algebra. A remarkable fact is that the ``infinite dimensional'' case obtained by replacing $\Z^2_{0,N}$ with the full lattice $\Z^2_{0}$ actually 
gives the much cleaner form
\[
	\mathbb{D}^{k}_i = c_{i,k}c_{i+k,k} - c_{i,k}c_{i-k,k} = \langle i^\perp, k\rangle^2\left(\frac{1}{|k|^2} - \frac{1}{|i|^2}\right)\left(\frac{1}{|i-k|^2} - \frac{1}{|i+k|^2}\right)
\]
making it much more amenable to algebraic methods.
\end{remark}

We would like to use the diagonal matrices in $\mathfrak{h}$ 
to proceed as Section \ref{sec:ill}, however, the situation here is far more delicate than that presented in Proposition \ref{prop:simple-power-method} due to the fact that $\mathbb{D}^k_i$ has an inversion symmetry $\mathbb{D}^k_{-i} = -\mathbb{D}^k_{i}$, which fundamentally restricts the possibility of having distinct differences. In particular, for any given diagonal matrix $\mathbb{D}$ satisfying $\mathbb{D}_{-i}=-\mathbb{D}_i$, the adjoint operator
\begin{equation}\label{eq:adjoint-block}
\mathrm{ad}(\mathbb{D}): \mathfrak{sl}_{\Z^2_{0,N}}(\R) \to \mathfrak{sl}_{\Z^2_{0,N}}(\R)
\end{equation}
is incapable of having simple spectrum since the odd symmetry of $\mathbb{D}_i$ implies that there are always two dimensional invariant spaces associated to the adjoint operator. Specifically we see that for each $i,j\in\Z^2_{0,N}$, $i\neq j$
\[
	\mathrm{ad}(\mathbb{D})E^{i,j} = (\mathbb{D}_i-\mathbb{D}_j)E^{i,j} \quad \text{and}\quad \mathrm{ad}(\mathbb{D})E^{-j,-i} = (\mathbb{D}_i-\mathbb{D}_j)E^{-j,-i}
\]
and therefore the eigenvalue $\mathbb{D}_i - \mathbb{D}_j$ for $\mathrm{ad}(\mathbb{D})$always has multiplicity at least $2$ with the invariant space
\[
	\mathrm{span}\{E^{i,j},E^{-j,i}\}.
\]

With this in mind, it is convenient to write $H^k$ as a linear combination of such matrices. In particular we can write for each $k\in\Z^2_{0,N}$
\[
\begin{aligned}
	H^k
	 = \frac{1}{2}\sum_{i,i-k\in \Z^{2}_{0,N}}(c_{i-k,k}E^{i,i-k} - c_{i,k}E^{k-i,-i}).
	 \end{aligned}
\]
Taking into account the sparsity of $H^k$ and the fact that for any diagonal matrix $\mathbb{D}$ satisfying $\mathbb{D}_{-i} = -\mathbb{D}_i$, $\mathrm{ad}(\mathbb{D})$ leaves $c_{i-k,k}E^{i,i-k} - c_{i,k}E^{k-i,i}$ invariant, suggests that if $\mathbb{D}$ satisfies the following distinctness property
\begin{equation}\label{eq:Distinctness}
\mathbb{D}_{i} - \mathbb{D}_{i-k} \neq \mathbb{D}_{i^\prime} - \mathbb{D}_{i^\prime - k},
\end{equation}
for each $k,i,i^\prime,i-k, i^\prime - k \in \Z^2_{0,N}$ with $i\neq i^\prime$ and $i \neq k - i^\prime$, then a similar procedure to the one carried out in Proposition \ref{prop:simple-power-method} implies that under the distinctness condition \eqref{eq:Distinctness}, if $\mathbb{D}$ belongs to $\mathrm{Lie}(\{H^k\})$, then $\mathrm{Lie}(\{H^k\})$ also contains the following sets of matrices for each $k \in \Z^2_{0,N}$ and $i,i-k \in \Z^2_{0,N}$
\[
	c_{i-k,k}E^{i,i-k} - c_{i,k}E^{k-i,-i}.
\]
By relabeling indices and eliminating $k$, this means that we can obtain matrices of the form
\begin{equation}\label{eq:M-matrix}
	M^{i,j} := c_{j,i-j}E^{i,j} - c_{i,i-j}E^{-j,-i}
\end{equation}
for each $i,j \in \Z^2_{0,N}$, $i-j\in\Z^2_{0,N}$. Similarly, in the distinctness condition \eqref{eq:Distinctness} we can eliminate $k$, and reduce this to a more symmetric constraint of the form
\[
	\mathbb{D}_i^k + \mathbb{D}_j^k + \mathbb{D}_{\ell}^k + \mathbb{D}_{m}^k \neq 0
\]
for $i,j,\ell,m\in \Z^2_{0,N}$ satisfying
\begin{equation}\label{eq:sum-constraint}
	i+ j + \ell+ m = 0,
\end{equation}
with the constraints
\begin{equation}\label{eq:constraint-set}
	\mathcal{C}_N: = \{ (i,j,\ell,m) \in (\Z^2_{0,N})^4\,:\,(i+j,\ell +m) \neq 0,\, (i+ \ell,j+m)\neq 0,\, (i+m,j + \ell) \neq 0\}.
\end{equation}
In general we have the following convenient reformulation of the distinctness condition \eqref{eq:Distinctness}.
\begin{definition}[Distinct]\label{def:distinctless-reform} We say a diagonal matrix $\mathbb{D} \in \mathfrak{h}$ is {\em distinct} if for every $(i,j,\ell,m)\in \mathcal{C}_N$ with $i+j+\ell +m =0$ we have
\begin{equation}\label{eq:symm-distinct}
	\mathbb{D}_i + \mathbb{D}_j + \mathbb{D}_{\ell} + \mathbb{D}_m \neq 0.
\end{equation}
\end{definition}

\begin{remark}
Note that the constraint set $\mathcal{C}_N$ defined in \eqref{eq:constraint-set} is fundamental to the symmetry of the sum $\mathbb{D}_i + \mathbb{D}_j + \mathbb{D}_{\ell} + \mathbb{D}_m$. Each constraint is necessary in the sense that if any one of them fails then we automatically have
\[
	\mathbb{D}_i + \mathbb{D}_j + \mathbb{D}_{\ell} + \mathbb{D}_{m} = 0
\]
due to the inversion symmetry $\mathbb{D}_{-i}= -\mathbb{D}_i$. 
\end{remark}

Under this new definition, we can summarize the above discussion as follows.

\begin{lemma}\label{lem:Mij-gen}
Suppose that $\mathfrak{h}$ contains a distinct diagonal matrix in the sense of Definition \ref{def:distinctless-reform}. Then $\mathrm{Lie}(\{H^k\})$ contains the matrices $\{M^{i,j}\,:\,i,j\in\Z^2_{0,N},\, i- j\in \Z^2_{0,N}\}$.
\end{lemma}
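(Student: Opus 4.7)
The plan is to apply the Krylov/Vandermonde strategy of Proposition \ref{prop:simple-power-method}, using the hypothesized distinct diagonal $\mathbb{D} \in \mathfrak{h} \subseteq \mathrm{Lie}(\{H^k\})$ as the ``generic'' diagonal element. The only novelty relative to the idealized setting of Section \ref{sec:ill} is that the inversion symmetry $\mathbb{D}_{-i} = -\mathbb{D}_i$ forces $\mathrm{ad}(\mathbb{D})$ to have at least two-dimensional invariant subspaces, so one cannot hope to extract individual elementary matrices $E^{i,j}$; the role of elementary matrices is instead played by the symmetrized building blocks $M^{i,j}$.

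First, for each $k \in \Z^2_{0,N}$, rewrite $H^k$ in the symmetrized form
\[
	H^k = \tfrac{1}{2}\sum_{i\,:\,i,\,i-k \in \Z^2_{0,N}} T^{i,k}, \qquad T^{i,k} := c_{i-k,k}E^{i,i-k} - c_{i,k}E^{k-i,-i},
\]
which is essentially what is recorded in the discussion preceding \eqref{eq:M-matrix}; setting $j = i-k$ gives $T^{i,k} = M^{i,j}$, and the involution $i \mapsto k-i$ (together with $c_{-a,-b} = c_{a,b}$) yields $T^{k-i,k} = T^{i,k}$, so the $T^{i,k}$ are naturally indexed by orbits. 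A direct computation using $\mathbb{D}_{-a} = -\mathbb{D}_a$ shows
\[
	\mathrm{ad}(\mathbb{D})T^{i,k} = (\mathbb{D}_i - \mathbb{D}_{i-k})T^{i,k},
\]
and $T^{i,k}, T^{i',k}$ corresponding to distinct orbits have disjoint support, hence are linearly independent.

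The crucial step is to check that distinctness of $\mathbb{D}$ forces the eigenvalues $\{\mathbb{D}_i - \mathbb{D}_{i-k}\}$, one per orbit, to be pairwise distinct. A collision $\mathbb{D}_i - \mathbb{D}_{i-k} = \mathbb{D}_{i'} - \mathbb{D}_{i'-k}$ rearranges via $\mathbb{D}_{-a} = -\mathbb{D}_a$ to
\[
	\mathbb{D}_{i} + \mathbb{D}_{k-i} + \mathbb{D}_{-i'} + \mathbb{D}_{i'-k} = 0,
\]
and the four-tuple $(i,k-i,-i',i'-k) \in (\Z^2_{0,N})^4$ sums to zero; moreover, since the orbits are distinct, i.e.\ $i \notin \{i',k-i'\}$, none of the three pair-sums in the definition \eqref{eq:constraint-set} of $\mathcal{C}_N$ vanish, so the tuple lies in $\mathcal{C}_N$. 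This contradicts Definition \ref{def:distinctless-reform}.

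With pairwise distinct eigenvalues in hand, repeating the Vandermonde argument from the proof of Proposition \ref{prop:simple-power-method}, with the $T^{i,k}$ now playing the role previously played by the elementary matrices, shows that each $T^{i,k}$ is a real linear combination of $H^k,\mathrm{ad}(\mathbb{D})H^k,\ldots,\mathrm{ad}(\mathbb{D})^{m}H^k$, hence belongs to $\mathrm{Lie}(\{H^k\})$. Ranging over $k \in \Z^2_{0,N}$ and all orbit representatives, and rewriting $T^{i,k} = M^{i,i-k}$, produces every $M^{i,j}$ with $i,j,i-j \in \Z^2_{0,N}$. The main technical content is really just the combinatorial bookkeeping in the collision step — checking that eigenvalue coincidences translate precisely to $\mathcal{C}_N$-admissible four-tuples — after which the conclusion is a clean application of the eigen-decomposition template already worked out in Section \ref{sec:ill}.
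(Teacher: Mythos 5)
Your proof is correct and takes essentially the same route as the paper, whose "proof" of Lemma \ref{lem:Mij-gen} is precisely the discussion preceding it: decompose $H^k$ into the $\mathrm{ad}(\mathbb{D})$-invariant blocks $c_{i-k,k}E^{i,i-k}-c_{i,k}E^{k-i,-i}$, use distinctness to separate their eigenvalues, and run the Krylov/Vandermonde argument of Proposition \ref{prop:simple-power-method}. You merely make explicit the orbit bookkeeping and the verification that an eigenvalue collision produces a $\mathcal{C}_N$-admissible zero-sum four-tuple, details the paper leaves as a sketch.
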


It turns out that this set of matrices $M^{i,j}$, each one being comprised of linear combination of pairs of elementary matrices, is sufficient to generate all of $\mathfrak{sl}_{\Z^{2}_{0,N}}(\R)$. The proof of this fact is the content of the following subsection.
\begin{proposition}\label{prop-Mij-generate}
The matrices $\{M^{i,j}\,:\,i,j\in\Z^2_{0,N},\, i- j\in \Z^2_{0,N}\}$ generate $\mathfrak{sl}_{\Z^{2}_{0,N}}(\R)$.
\end{proposition}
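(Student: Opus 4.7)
Each generator $M^{i,j} = c_{j,i-j}E^{i,j} - c_{i,i-j}E^{-j,-i}$ is a linear combination of two elementary matrices paired by the involution $\sigma(E^{a,b}) := -E^{-b,-a}$, which is easily checked to be a Lie algebra automorphism of $\mathfrak{sl}_{\Z^2_{0,N}}(\R)$. The plan is to use Lie brackets of the $M$'s to produce, for a sufficient supply of pairs $(i,\ell)$, a \emph{second} vector in $\mathrm{span}\{E^{i,\ell}, E^{-\ell,-i}\}$ that is linearly independent from $M^{i,\ell}$; then each individual elementary matrix $E^{i,\ell}$ and $E^{-\ell,-i}$ lies in $\mathrm{Lie}(\{M^{i,j}\})$, and a suitable list of off-diagonal $E^{a,b}$'s generates all of $\mathfrak{sl}_{\Z^2_{0,N}}(\R)$ via the standard Chevalley-type relations $[E^{a,b},E^{b,c}] = E^{a,c}$ and $[E^{a,b},E^{b,a}] = E^{a,a} - E^{b,b}$.

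\textbf{Key calculation.} For $i,j,\ell \in \Z^2_{0,N}$ in ``generic position'' (all six of $\pm i, \pm j, \pm\ell$ distinct and the differences $i-j$, $j-\ell$, $i-\ell$ all in $\Z^2_{0,N}$), the identity $[E^{a,b}, E^{c,d}] = \delta_{b,c}E^{a,d} - \delta_{d,a}E^{c,b}$ collapses the four-term expansion of $[M^{i,j}, M^{j,\ell}]$ to
\begin{align*}
[M^{i,j}, M^{j,\ell}] \;=\; c_{j,i-j}\, c_{\ell,j-\ell}\, E^{i,\ell} \;-\; c_{i,i-j}\, c_{j,j-\ell}\, E^{-\ell,-i}.
\end{align*}
Together with $M^{i,\ell} = c_{\ell,i-\ell}E^{i,\ell} - c_{i,i-\ell}E^{-\ell,-i}$, this pair spans $\mathrm{span}\{E^{i,\ell}, E^{-\ell,-i}\}$ precisely when
\begin{align*}
\Delta(i,j,\ell) \;:=\; c_{i,i-\ell}\, c_{j,i-j}\, c_{\ell,j-\ell} \;-\; c_{\ell,i-\ell}\, c_{i,i-j}\, c_{j,j-\ell} \;\neq\; 0.
\end{align*}
Plugging in $c_{p,q} = \langle p^\perp, q\rangle_r(|q|_r^{-2} - |p|_r^{-2})$ and using $\langle j^\perp, i-j\rangle_r = \langle j^\perp, i\rangle_r$ together with the analogous identities for the other differences, $\Delta$ factors as $\langle \ell^\perp,i\rangle_r \langle j^\perp,i\rangle_r \langle \ell^\perp,j\rangle_r$ times an explicit polynomial in the six reciprocal squared norms $|i|_r^{-2}, |j|_r^{-2}, |\ell|_r^{-2}, |i-j|_r^{-2}, |j-\ell|_r^{-2}, |i-\ell|_r^{-2}$. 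This factored form makes it straightforward to check that $\Delta$ is generically nonzero as one varies $j$ for fixed $i,\ell$.

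\textbf{Main obstacle.} The delicate point is verifying non-vanishing of $\Delta(i,j,\ell)$ for enough pairs $(i,\ell)$, together with the attendant special cases: the prefactor vanishes when two of $i,j,\ell$ are parallel lattice vectors, one must handle $i = \pm\ell$, and near the boundary of $\Z^2_{0,N}$ some of the requisite differences may fall outside the truncated lattice. For $N \geq 392$ there is plenty of room to choose the intermediate index $j$ to avoid both the algebraic varieties on which the polynomial part vanishes and the lattice-boundary issues; the few exceptional low-lying configurations can be repaired either by chaining two intermediates via a triple bracket $[[M^{i,j}, M^{j,j'}], M^{j',\ell}]$ or treated by hand. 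The cleanest write-up is probably to exhibit explicitly a short list of elementary matrices $E^{a,b}$ shown to belong to $\mathrm{Lie}(\{M^{i,j}\})$ and that suffice, via the standard relations above, to generate all of $\mathfrak{sl}_{\Z^2_{0,N}}(\R)$; this reduces the whole matter to combinatorial bookkeeping rather than any genuinely deep algebra.
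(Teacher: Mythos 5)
Your structural reduction is the same as the paper's: bracket two of the $M$'s to produce a second element of $\mathrm{span}\{E^{i,\ell},E^{-\ell,-i}\}$, solve a $2\times 2$ linear system to isolate the individual elementary matrices for a suitable family of pairs, and then invoke the standard fact that such a family generates $\mathfrak{sl}_{\Z^2_{0,N}}(\R)$. Your bracket identity $[M^{i,j},M^{j,\ell}] = c_{j,i-j}c_{\ell,j-\ell}E^{i,\ell} - c_{i,i-j}c_{j,j-\ell}E^{-\ell,-i}$ is correct and is exactly the computation in the paper (there written as $[M^{i,k},M^{k,j}]$); the only cosmetic difference is that the paper pairs \emph{two} such brackets with two distinct intermediates $k,k'\in\mathcal{S}^i\cap\mathcal{S}^j$ and targets pairs with $i-j\in\Z^2_{0,1}$, whereas you pair one bracket with $M^{i,\ell}$ itself.

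The genuine gap is the non-vanishing of the $2\times 2$ determinant. You assert that $\Delta(i,j,\ell)\neq 0$ is ``straightforward to check'' generically and that exceptional configurations can be ``treated by hand,'' but this is precisely the hard content of the proposition, and it is not straightforward: in the paper this step is Proposition \ref{prop:algebraic-geo-proof-1}, whose proof requires extracting the coefficients in the intermediate variable via Lemma \ref{lem:Reducek} and then running an exact Gr\"obner-basis/saturation computation (Nullstellensatz) in the variables $(i,j,r)$ to show the resulting polynomial system has no solutions off the constraint variety. Two specific problems with your sketch: (a) your genericity argument treats the six quantities $|i|_r^{-2},|j|_r^{-2},|\ell|_r^{-2},|i-j|_r^{-2},|j-\ell|_r^{-2},|i-\ell|_r^{-2}$ as if they were independent variables, but they are algebraically dependent functions of $(i,j,\ell,r)$, so non-vanishing of the factored polynomial in those formal variables does not imply non-vanishing for actual lattice configurations; and (b) the statement must hold for \emph{every} fixed admissible pair $(i,\ell)$, every $N$, and every aspect ratio $r>0$ (including irrational $r$), so one cannot dispose of the bad set by counting lattice points or by finite case-checking ``by hand'' — one needs an argument, uniform in $r$ and in the truncation, that for each target pair some admissible intermediate $j$ gives $\Delta\neq 0$. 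Until that non-vanishing statement is actually proved (by an algebraic argument of the type the paper carries out, or otherwise), the proposal does not yield the proposition.
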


As a simple corollary of this and Lemma \ref{lem:Mij-gen} this reduces Proposition \ref{prop:sufficient-projectspan-NS} to a condition on the existence of a distinct matrix inside $\mathfrak{h}$.

\begin{corollary} \label{cor:disDon}
If $\mathfrak{h}$ contains a distinct matrix, then $\mathrm{Lie}(\{H_k\}) = \mathfrak{sl}_{\Z^{2}_{0,N}}(\R)$.
\end{corollary}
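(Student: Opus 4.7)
The plan is that this corollary is essentially a direct chaining of the two results immediately preceding it, together with the observation that the diagonal algebra $\mathfrak{h}$ sits inside $\mathrm{Lie}(\{H^k\})$ by construction. So the argument should be short.

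First I would note that, by the definition of $\mathfrak{h}$, every $\mathbb{D}^k = [H^k, H^{-k}]$ is a single Lie bracket of generators, so
\[
\mathfrak{h} = \Span\{\mathbb{D}^k : k\in \Z^2_{0,N}\} \subseteq \mathrm{Lie}(\{H^k\}).
\]
In particular, any distinct matrix $\mathbb{D}\in\mathfrak{h}$ (in the sense of Definition \ref{def:distinctless-reform}) actually lies in $\mathrm{Lie}(\{H^k\})$, so it can freely be used in further bracket manipulations with the $H^k$.

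Next I would invoke Lemma \ref{lem:Mij-gen}: the existence of a distinct $\mathbb{D}\in\mathfrak{h}$ implies that $\mathrm{Lie}(\{H^k\})$ contains every matrix of the form $M^{i,j}$ with $i,j, i-j\in\Z^2_{0,N}$. (Intuitively, this is the Vandermonde/Krylov step from Proposition \ref{prop:simple-power-method}, adapted to the inversion symmetry $\mathbb{D}_{-i}=-\mathbb{D}_i$ which forces paired elementary matrices $E^{i,j}$ and $E^{-j,-i}$ to be isolated together rather than separately.) Then, applying Proposition \ref{prop-Mij-generate}, the collection $\{M^{i,j}\}$ itself generates $\mathfrak{sl}_{\Z^2_{0,N}}(\R)$ as a Lie algebra, so
\[
\mathfrak{sl}_{\Z^2_{0,N}}(\R) \;\subseteq\; \mathrm{Lie}(\{H^k\}).
\]

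For the reverse inclusion, I would simply observe that each $H^k$ is a real matrix indexed by $\Z^2_{0,N}$ with $H^k_{\ell,\ell} = c_{\ell,k}\delta_{2k=0} = 0$, so in particular $\tr H^k = 0$. Since $\mathfrak{sl}_{\Z^2_{0,N}}(\R)$ is a Lie subalgebra of $\mathfrak{gl}_{\Z^2_{0,N}}(\R)$ containing every $H^k$, it contains $\mathrm{Lie}(\{H^k\})$, giving the opposite inclusion and hence equality. There is no real obstacle here beyond correctly quoting the two preceding results; the actual work is pushed into the proofs of Lemma \ref{lem:Mij-gen} (the Krylov/Vandermonde argument invoking distinctness) and Proposition \ref{prop-Mij-generate} (showing the paired-elementary matrices $M^{i,j}$ generate $\mathfrak{sl}$), which I am treating as given here.
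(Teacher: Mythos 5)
Your proof is correct and is essentially the paper's argument: the corollary is obtained by chaining Lemma \ref{lem:Mij-gen} (a distinct $\mathbb{D}\in\mathfrak{h}\subseteq\mathrm{Lie}(\{H^k\})$ yields all the $M^{i,j}$) with Proposition \ref{prop-Mij-generate} (the $M^{i,j}$ generate $\mathfrak{sl}_{\Z^2_{0,N}}(\R)$), the reverse inclusion being immediate since each $H^k$ is trace-free so $\mathrm{Lie}(\{H^k\})\subseteq\mathfrak{sl}_{\Z^2_{0,N}}(\R)$. (Only a cosmetic slip: from $(H^k)_{\ell,j}=c_{j,k}\delta_{k+j=\ell}$ the diagonal entry is $c_{\ell,k}\delta_{k=0}=0$, not $c_{\ell,k}\delta_{2k=0}$, which does not affect the conclusion.)
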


\subsubsection{Proof of Proposition \ref{prop-Mij-generate}}

To show Proposition \ref{prop-Mij-generate} we first assume an algebraic property of the coefficients $c_{j,k}$, which we will prove in Proposition \ref{prop:algebraic-geo-proof-1} using techniques from computational algebraic geometry. To simplify notation in what follows, denote
\[
	\mathcal{S}^i := \Z^2_{0,N}\cap\{\Z^2_{0,N}+i\}.
\] 

\begin{lemma}
Suppose that for each $i,j\in \Z^2_{0,N}$, with $i-j\in \Z^2_{0,1}$, there exists a $k,k^\prime\in \mathcal{S}^i\cap\mathcal{S}^j$ such that
\[
	d_{i,j}^{k,k^\prime} :=  c_{i,i-k}c_{k,j-k}c_{k^\prime,i-k^\prime}c_{j,j-k^\prime}-c_{k,i-k}c_{j,j-k}c_{i,i-k^\prime}c_{k^\prime,j-k^\prime} \neq 0.
\]
Then $\{M^{i,j}\,:\, i,j\in \Z^2_{0,N}, i- j\in \Z^2_{0,N}\}$ generates $\mathfrak{sl}_{\Z^2_{0,N}}(\R)$.
\end{lemma}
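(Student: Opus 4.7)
The plan is to convert pairs of brackets of the $M^{i,j}$ into linear combinations of two specific elementary matrices, then use the non-vanishing of $d_{i,j}^{k,k'}$ to isolate individual elementary matrices, and finally propagate these by iterated commutators to all of $\mathfrak{sl}_{\Z^2_{0,N}}(\R)$.

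Concretely, for $k \in \mathcal{S}^i \cap \mathcal{S}^j$ I would expand
\[
N^{i,j,k} := [M^{i,k}, M^{k,j}]
\]
using $M^{i,k} = c_{k,i-k}E^{i,k} - c_{i,i-k}E^{-k,-i}$, $M^{k,j} = c_{j,k-j}E^{k,j} - c_{k,k-j}E^{-j,-k}$, and the product rule $E^{a,b}E^{c,d} = \delta_{b,c}E^{a,d}$. Of the eight resulting elementary-matrix products, only two survive, yielding
\[
N^{i,j,k} = c_{k,i-k}c_{j,k-j}E^{i,j} - c_{k,k-j}c_{i,i-k}E^{-j,-i},
\]
provided $i \neq j$ and $k \notin \{\pm i,\pm j\}$. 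I would then verify that $d_{i,j}^{k,k'} \neq 0$ \emph{automatically} forces those exclusions on $k$ and $k'$: from $\langle a^\perp, a\rangle_r = 0$ one gets $c_{a,0} = 0$ and $c_{a,2a} = 0$, and a direct case check shows every term of $d_{i,j}^{k,k'}$ acquires a vanishing factor if $k$ or $k'$ belongs to $\{\pm i,\pm j\}$.

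Next, for a nearest-neighbor pair $(i,j)$ with $i-j \in \Z^2_{0,1}$ (which forces $i \neq \pm j$, so in particular $E^{i,j} \neq E^{-j,-i}$), I would apply the preceding step to both witnesses $k,k'$ provided by the hypothesis. Both $N^{i,j,k}$ and $N^{i,j,k'}$ lie in $\mathrm{span}(E^{i,j}, E^{-j,-i})$, and the determinant of the $2\times 2$ coefficient matrix
\[
\det\begin{pmatrix} c_{k,i-k}c_{j,k-j} & -c_{k,k-j}c_{i,i-k} \\ c_{k',i-k'}c_{j,k'-j} & -c_{k',k'-j}c_{i,i-k'}\end{pmatrix}
\]
can be rewritten as $d_{i,j}^{k,k'}$ after applying the symmetries $c_{a,b} = c_{b,a}$ and $c_{a,-b} = -c_{a,b}$ to the factors $c_{j,k-j}$, $c_{k,k-j}$, $c_{j,k'-j}$, $c_{k',k'-j}$ to match the stated form. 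Since this determinant is nonzero by assumption, $N^{i,j,k}$ and $N^{i,j,k'}$ span the full two-dimensional subspace, so in particular $E^{i,j} \in \mathrm{Lie}(\{M^{i,j}\})$ for every nearest-neighbor pair.

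To conclude, I would promote these nearest-neighbor elementary matrices to all of $\mathfrak{sl}_{\Z^2_{0,N}}(\R)$ via iterated brackets $[E^{a,c}, E^{c,b}] = E^{a,b}$ (valid whenever $a \neq b$) along any self-avoiding path $a = p_0, p_1, \dots, p_n = b$ in the nearest-neighbor graph on $\Z^2_{0,N}$; connectedness of this graph (trivial for $N \geq 1$) guarantees such paths exist, and one builds $E^{a,b}$ by bracketing the edge matrices one at a time. The traceless diagonal generators $E^{a,a} - E^{b,b}$ then arise from $[E^{a,b}, E^{b,a}]$, completing the generation of $\mathfrak{sl}_{\Z^2_{0,N}}(\R)$. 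The main obstacle is the careful bookkeeping in the first step --- confirming that nonvanishing of $d_{i,j}^{k,k'}$ is exactly the right condition to simultaneously eliminate the degenerate cases $k,k' \in \{\pm i,\pm j\}$ and render the two surviving generators linearly independent.
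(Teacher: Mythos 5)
Your proposal is correct and follows essentially the same route as the paper's proof: you form the brackets $[M^{i,k},M^{k,j}]$, view them as a $2\times 2$ linear system in $E^{i,j}$ and $E^{-j,-i}$ whose determinant is exactly $d_{i,j}^{k,k'}$ (after the sign symmetries $c_{a,-b}=-c_{a,b}$, $c_{a,b}=c_{b,a}$), and then generate $\mathfrak{sl}_{\Z^2_{0,N}}(\R)$ from the nearest-neighbor elementary matrices. The only difference is that you spell out details the paper leaves implicit (the degenerate cases $k,k'\in\{\pm i,\pm j\}$ being ruled out by $d_{i,j}^{k,k'}\neq 0$, and the path/bracket argument behind Remark \ref{rem:generating-set}), which is fine.
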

\begin{proof}
If we take commutators of matrices of the form $[M^{i,k},M^{k,j}]$, where $i-j\in\Z^2_{0,1}$ and $k\in \mathcal{S}^{i}\cap\mathcal{S}^j$, we have 
\[
	[M^{i,k},M^{k,j}] = c_{k,i-k}c_{j,k-j}E^{i,j} - c_{i,k-i}c_{k,j-k}E^{-j,-i}.
\]
Therefore if we pick any two $k,k^\prime \in \mathcal{S}^{i}\cap\mathcal{S}^j$, we obtain a $2\times 2$ linear system for $E^{i,j}$ and $E^{-j,-i}$
\begin{equation}\label{eq:Linear-sys-Eij}
\begin{aligned}
[M^{i,k},M^{k,j}] &= c_{k,i-k}c_{j,k-j}E^{i,j} - c_{i,k-i}c_{k,j-k}E^{-j,-i}\\
[M^{i,k^\prime},M^{k^\prime,j}] &= c_{k^\prime,i-k^\prime}c_{j,k^\prime-j}E^{i,j} - c_{i,k^\prime-i}c_{k^\prime,j-k^\prime}E^{-j,-i}.
\end{aligned}
\end{equation}
We can write $E^{i,j}$ and $E^{-j,-i}$ as a linear combination of $[M^{i,k},M^{k,j}]$ and $[M^{i,k^\prime},M^{k^\prime,j}]$
provided that for each $i,j\in \Z^2_0$, with $i-j\in\Z^2_{0,1}$ we can find a $k,k^\prime\in \mathcal{S}^i\cap\mathcal{S}^j$ such that
\[
	d^{k,k^\prime}_{i,j} = \mathrm{det} \begin{pmatrix}-c_{k,i-k}c_{j,j-k} & c_{i,i-k}c_{k,j-k}\\ -c_{k^\prime,i-k^\prime}c_{j,j-k^\prime} & c_{i,i-k^\prime}c_{k^\prime,j-k^\prime}
	\end{pmatrix} \neq 0.
\]
This is true by assumption and therefore we can solve the linear system \eqref{eq:Linear-sys-Eij} and obtain all elementary matrices $E^{i,j}$ for $i,j\in \Z^2_{0,N}$, with $i-j\in\Z^2_{0,1}$.  
One can easily check that all such elementary matrices generate $\mathfrak{sl}_{\Z^2_{0,N}}(\R)$ (see Remark \ref{rem:generating-set}).
\end{proof}

Note that the property that $d_{i,j}^{k,k^\prime}\neq 0$ is a purely algebraic one. In particular, suppose by contradiction, that there exists an $i,j\in \Z^2_{0,N}$ with $i-j\in \Z^{2}_{0,1}$ such that
\[
	d_{i,j}^{k,k^\prime} = 0,  \quad \text{for all}\quad k,k^\prime \in \mathcal{S}^i\cap\mathcal{S}^j.  
\]
That is then $i,j$ must solve a set of rational equations (with integer coefficients), one for each pair $(k,k') \in (\mathcal{S}^i\cap\mathcal{S}^j)^2$. 
Next, we show that this system of rational equations is algebraically inconsistent. We will prove the following Proposition using machinery from computational algebraic geometry, which we review in Appendix \ref{sec:Groebner}.
The computations are done using Maple; see Appendix \ref{subsec:alg-geo-proof-1} for the computer code. 
\begin{proposition}\label{prop:algebraic-geo-proof-1}
For each $i,j\in \Z^2_{0,N}$ with $i-j\in \Z^{2}_{0,1}$, there exists $k,k^\prime \in \mathcal{S}^i\cap\mathcal{S}^j$ such that
$d^{k,k^\prime}_{i,j} \neq 0$.
\end{proposition}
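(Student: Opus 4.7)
The plan is to argue by contradiction using computational algebraic geometry, in the spirit already advertised in the paper's remarks. Suppose that there exists a pair $i,j\in \Z^2_{0,N}$ with $i-j\in \Z^2_{0,1}$ such that $d^{k,k'}_{i,j}=0$ for every $k,k'\in \mathcal{S}^i\cap\mathcal{S}^j$. Clearing denominators in the definition of $c_{\cdot,\cdot}$, each such identity becomes a polynomial relation in the indeterminates $r,i_1,i_2,j_1,j_2,k_1,k_2,k'_1,k'_2$. Collecting enough of these relations should yield an ideal $I$ in $\mathbb{Q}[r,i_1,i_2,j_1,j_2,\ldots]$ that is inconsistent (i.e. $1\in I$), and Maple will certify this by producing the reduced Gr\"obner basis $\{1\}$.

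First I would reduce to a manageable number of sub-cases. Because $i-j$ lies in the finite set $\Z^2_{0,1}$ (eight possibilities), and because the coefficient $c_{j,k}$ has natural symmetries under $j\mapsto -j$, $k\mapsto -k$ and under reflections across the axes, one can cut the number of essentially different configurations down to just a few. In each case I would substitute $j=i-\delta$ for the fixed offset $\delta\in\Z^2_{0,1}$ and treat $r,i_1,i_2$ (and, for the moment, $k_1,k_2,k'_1,k'_2$) as free variables.

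Second I would choose a specific small family of admissible shifts $k=i+\alpha$, $k'=i+\beta$ (with $\alpha,\beta$ small fixed integer vectors). Since $N\geq 392$ and $|i-j|_{\ell^\infty}=1$, the intersection $\mathcal{S}^i\cap\mathcal{S}^j$ is enormous, so any fixed finite list of shifts is automatically contained in it for every $i,j$ with $|i|_{\ell^\infty},|j|_{\ell^\infty}$ away from the boundary; the boundary-near cases can be handled by choosing the shifts in the opposite direction. The resulting finite system $\{d^{k,k'}_{i,j}=0\}$, once cleared of denominators, generates the ideal $I$ whose inconsistency we want to certify.

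Third I would augment $I$ by saturating with respect to the polynomials that encode the non-degeneracy hypotheses: namely $|i|^2_r,|j|^2_r,|k|^2_r,|k'|^2_r$ (denominators), the factors $\langle\cdot^\perp,\cdot\rangle_r$ and $(|k|^2_r-|j|^2_r)$ that appear in each $c_{\cdot,\cdot}$ entering the definition (so that we are only looking at genuinely nonzero $c$'s), and $r$ itself. Concretely, following the saturation trick recalled in Appendix \ref{sec:Groebner}, one introduces an auxiliary variable $t$ and a polynomial $t\cdot P-1$ for the product $P$ of all these factors. The reduced Gr\"obner basis of the resulting ideal in $\mathbb{Q}[r,i_1,i_2,k_1,k_2,k'_1,k'_2,t]$ (with a carefully chosen variable ordering, probably a block order with $t$ eliminated first and $r$ last) should equal $\{1\}$, which is the desired contradiction.

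The hard part, as forewarned in the paper's remarks, is engineering the computation so that it actually terminates. The Gr\"obner basis of a generic ideal in this many variables with this high degree is hopelessly expensive; success depends on (i) choosing the right finite list of shifts $(\alpha,\beta)$ so that the algebraic relations they generate collapse quickly, (ii) picking an elimination order that isolates $r$ last and eliminates the auxiliary saturation variable first, and (iii) writing the saturating polynomial as a product in a form that Maple's F4 implementation can digest. Once these choices are made correctly, the certification that $1$ lies in the saturated ideal is routine and fully rigorous in exact rational arithmetic, and because the computation is carried out symbolically in $r$ and in generic lattice coordinates $i_1,i_2$, the conclusion is uniform in $r>0$ and in all $N\geq 392$.
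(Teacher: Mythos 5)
Your overall strategy (contradiction, clear denominators, saturate, certify $1$ in the ideal via a reduced Gr\"obner basis in exact arithmetic) is the same as the paper's, but the way you set up the ideal misses the one idea that makes the paper's computation both logically complete and feasible. The hypothesis you get to exploit is that $d^{k,k'}_{i,j}$ vanishes for \emph{all} $k,k'\in\mathcal{S}^i\cap\mathcal{S}^j$, a set which (because $i-j\in\Z^2_{0,1}$ and $N$ is large) contains a full box of side larger than the degree of the numerator $P(i,j,k,k',r)$ in $(k_1,k_2,k_1',k_2')$ (that degree is $10$). By the Vandermonde/coefficient-extraction argument of Lemma \ref{lem:Reducek}, this forces \emph{every} coefficient $f_1,\dots,f_s=\mathrm{coeffs}(P,\{k_1,k_2,k_1',k_2'\})$, which are polynomials in $(i_1,i_2,j_1,j_2,r)$ only, to vanish. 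The paper then works with $I=\brak{f_1,\dots,f_s}\subseteq\C[i_1,i_2,j_1,j_2,r]$, saturated only by $g=r^2|i|_r^2|j|_r^2|i-j|_r^2$, and one Gr\"obner basis computation (grevlex, $6$ variables including the saturation variable) returns $\{1\}$. You instead freeze finitely many specific shifts $k=i+\alpha$, $k'=i+\beta$ and $j=i-\delta$; whether \emph{any} such finite list produces an inconsistent saturated ideal is precisely the open heart of your argument, and you leave it to luck ("choosing the right finite list of shifts"), together with an unexecuted case analysis over the eight offsets $\delta$ and over boundary-near $i$. As written, this is a plan, not a proof: the key step that replaces it in the paper is the elimination of $k,k'$ altogether via Lemma \ref{lem:Reducek}, which needs no choice of shifts, no case split in $\delta$, and no boundary discussion beyond checking that $\mathcal{S}^i\cap\mathcal{S}^j$ contains a large enough box.

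There is also a logical flaw in your saturation step. You may only saturate by polynomials that are guaranteed nonzero at a putative counterexample: $r$, $|i|_r^2$, $|j|_r^2$, $|k|_r^2$, $|k'|_r^2$ and $|i-j|_r^2$ qualify, but the factors $\langle\cdot^\perp,\cdot\rangle_r$ and $(|k|_r^2-|j|_r^2)$ appearing inside the $c$'s do not: at an actual bad pair $(i,j)$ and your chosen shifts, these can legitimately vanish (e.g.\ $\langle i^\perp,k\rangle_r=0$ when $k$ is parallel to $i$, or $|k|_r=|j|_r$), and the contradiction hypothesis says nothing to exclude this. So even if Maple returned $\{1\}$ for your saturated ideal, you would only have ruled out counterexamples at which all those auxiliary factors are nonzero, and you would still owe a separate argument (or different shifts) for the degenerate configurations. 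The paper avoids this entirely by saturating only by the constraint polynomial $g$ encoding $i\neq 0$, $j\neq 0$, $i\neq j$, $r\neq 0$, which are exactly the hypotheses of the proposition.
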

\begin{proof}
To prove this, we first note that $d(i,j,k,k^\prime,r) = d^{k,k^\prime}_{i,j}$ is a purely rational algebraic function of the variables $i = (i_1,i_2),j=(j_1,j_2),k=(k_1,k_2), k^\prime = (k^\prime_1,k^\prime_2)$ and $r$. We denote the numerator by
\[
	P(i,j,k,k^\prime,r) = \text{numer}(d(i,j,k,k^\prime,r)).
\]
Suppose by contradiction that there exists an $i,j \in \Z^2_{0,N}$ with $i-j\in \Z^2_{0,1}$ such that $d(i,j,k,k^\prime,r) = 0$ for all $k,k^\prime\in\mathcal{S}^i\cap\mathcal{S}^j$. Then we have
\[
	P(i,j,k,k^\prime,r) = 0 , \quad \text{for all}\quad k,k^\prime \in \mathcal{S}^i\cap\mathcal{S}^j.
\]
Note that this polynomial is degree $10$ in $k,k^\prime$. 
If $N \geq 8$ then since $i-j \in \Z^2_{0,1}$, $\mathcal{S}^i\cap\mathcal{S}^j$ always contains $\Z^2_{0,6}$ and Lemma \ref{lem:Reducek} implies that the collection of polynomials in $i,j,r$ defined by the coefficients of the polynomial in $k_1,k_2,k^\prime_1,k^\prime_2$
\[
	\{f_1,\ldots,f_s\} = \mathrm{coeffs}(P,\{k_1,k_2,k^\prime_1,k^\prime_2\})
\]
must also vanish (note that the coefficients $f_1,...,f_s$ are polynomials in $(i_1,i_2,j_1,j_2)$ and $r$ with integer coefficients). 
By extending the variables $i_1,i_2,j_1,j_2$ and $r$, to the algebraically closed field $\C$, we define the polynomial ideal generated by $\{f_1,\ldots,f_s\}$ 
\[
	I = \langle f_1,\ldots,f_s\rangle\subseteq \C[i_1,i_2,j_1,j_2,r]
\] 
(see Appendix \ref{sec:Groebner} for a review of the relevant algebraic geometry). 
Next we define the constraint polynomial
\[
	g(i,j,r) = r^2|i|_r^2|j|_r^2|i-j|_r^2.
\]
Note that on $\C^5$, we have $g\neq 0$ exactly encodes the constraint that $i\neq 0, j\neq 0, i\neq j, r\neq 0$. 
In light of this, our goal then is to show that affine varieties induced by $I$ and $g$ are the same
\[
	\mathbf{V}(I)= \mathbf{V}(g),
\]
since this implies that the only common zeros of $\{f_1,\ldots,f_s\}$ in $\C^5$ are those with $i=0,j=0$, $i=j$ or $r=0$. By the strong Nullstellensatz  [Ch 4, Theorem 10 \cite{Cox}] (see also Theorem \ref{thm:SNull} below),  this is true if and only if there exists an $n\in \Z_{\geq 0}$ such that $g^n \in I$, or equivalently by Theorem \ref{thm:Saturation}, if the reduced Gr\"obner basis of the saturation $I:g^\infty$ for any given monomial ordering is $\{1\}$ (see Appendix \ref{sec:Sat} for background on saturation). 
To compute a saturation with respect to a single polynomial, it suffices to introduce an extra variable $z$ to represent $1/g$ and consider the augmented ideal 
\[
	\tilde{I} = \langle f_1,\dots,f_s, gz-1\rangle \subseteq \C[i_1,i_2,j_1,j_2,r,z].
\]
By Theorem \ref{thm:Saturation}, if $\{1\}$ is the reduced Gr\"obner basis for $\tilde{I}$, then it is also the reduced Gröbner basis for $I:g^\infty$.

We use Maple \cite{maple} to compute the reduced Gr\"obner basis $G$ for the ideal $\tilde{I}$. 
This computation is done in graded reverse lexicographical order (or ``grevlex'') and the variable ordering
\[
	i_1 < i_2 < j_1 < j_2 < z <r
\] 
using an implementation of the F4 algorithm \cite{F1999}; see Appendix \ref{sec:code}. 
The result is $G = \{1\}$, thereby concluding the proof.
\end{proof}
\section{Verifying distinctness in the diagonal algebra} \label{sec:distinct}

So far we have shown that if $\mathfrak{h}$ contains a {\em distinct matrix} $\mathbb{D}$ in the sense of Definition \ref{def:distinctless-reform} then $\mathrm{Lie}(\{H^k\}) = \mathfrak{sl}_{\Z^2_{0,N}}(\R)$, which implies projective spanning by Proposition \ref{prop:sufficient-projectspan-NS}.

The goal of this section is to show that $\mathfrak{h}$ does contain many distinct matrices, in fact, they are  `generic' in the sense that they form an open and dense set in $\mathfrak{h}$. 
Unfortunately, each individual diagonal matrix $\mathbb{D}^k = [H^k,H^{-k}]$ is certainly not distinct, since there are many degeneracies related to each particular $k$. However, we have the benefit of a large number of such diagonal matrices and can take linear combinations of each $\mathbb{D}^k$ to find a distinct matrix. Specifically taking linear combinations allows one to reduce the condition for the distinctness condition \eqref{eq:symm-distinct} to one that is much more mild on the entire collection $\{\mathbb{D}^k\}$.

Indeed, the main result of this section, and the main effort of proof is to show the following sufficient condition on the collection $\{\mathbb{D}^k\}$.
\begin{proposition}\label{prop:main-distinctness-prop}
For each $(i,j,\ell,m)\in\mathcal{C}_N$ (defined in \eqref{eq:constraint-set}) with $i+j+\ell+m=0$, there exists a $k\in\Z^2_{0,N}$ such that 
\[
	\mathbb{D}^k_i + \mathbb{D}^k_j + \mathbb{D}_\ell^k + \mathbb{D}^k_m \neq 0.
\]
\end{proposition}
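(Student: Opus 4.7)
The plan is to proceed by contradiction, closely parallel to the strategy of Proposition \ref{prop:algebraic-geo-proof-1} but substantially heavier: first reduce the uniform vanishing of $\mathbb{D}^k_i + \mathbb{D}^k_j + \mathbb{D}^k_\ell + \mathbb{D}^k_m$ over $k \in \Z^2_{0,N}$ to a polynomial identity in the components of $(i,j,\ell,m,r)$, then use a Gr\"obner basis computation to show this identity is inconsistent with the non-degeneracy constraints defining $\mathcal{C}_N$.

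The setup step would be as follows. Suppose for contradiction that some $(i,j,\ell,m) \in \mathcal{C}_N$ with $i+j+\ell+m = 0$ satisfies $\mathbb{D}^k_i + \mathbb{D}^k_j + \mathbb{D}^k_\ell + \mathbb{D}^k_m = 0$ for every $k \in \Z^2_{0,N}$. The first technical step is to restrict attention to the sublattice of $k$ on which all eight truncation indicators $\1_{\Z^2_{0,N}}(i \pm k)$, $\1_{\Z^2_{0,N}}(j\pm k)$, $\1_{\Z^2_{0,N}}(\ell \pm k)$, $\1_{\Z^2_{0,N}}(m\pm k)$ evaluate to one. On this sublattice, Remark \ref{rem:infinite-algebra-diag} gives the clean rational form
\[
\tilde{\mathbb{D}}^k_i = \langle i^\perp, k \rangle_r^2 \Bigl( \tfrac{1}{|k|_r^2} - \tfrac{1}{|i|_r^2} \Bigr) \Bigl( \tfrac{1}{|i-k|_r^2} - \tfrac{1}{|i+k|_r^2} \Bigr),
\]
and the common numerator $P(i,j,\ell,m,k,r)$ of the sum becomes a polynomial in $(k_1,k_2)$ with integer coefficients in the other variables. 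The bound $N \geq 392$ would then be invoked precisely to guarantee that the admissible sublattice is large enough in each coordinate direction to trigger Lemma \ref{lem:Reducek}, forcing $P$ to vanish identically as a polynomial in $k_1,k_2$.

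The algebraic step consists in using the constraint $\ell = -i-j-m$ to eliminate $\ell$, so that the coefficients $\{f_1,\ldots,f_s\} = \mathrm{coeffs}(P,\{k_1,k_2\})$ are polynomials in $(i_1,i_2,j_1,j_2,m_1,m_2,r)$ with integer coefficients. Extending the variables to $\C$, I would assemble the ideal $I = \langle f_1,\ldots,f_s\rangle$ and encode the non-degeneracy conditions defining $\mathcal{C}_N$ (namely $i$, $j$, $m$, $i+j+m$, $i+j$, $i+m$, $j+m$ and $r$ all non-zero) by a single product polynomial
\[
g(i,j,m,r) = r^2 \, |i|_r^2 \, |j|_r^2 \, |m|_r^2 \, |i+j+m|_r^2 \, |i+j|_r^2 \, |i+m|_r^2 \, |j+m|_r^2.
\]
By the strong Nullstellensatz it suffices to show that $g^n \in I$ for some $n$, equivalently that the saturation $I : g^\infty$ is the unit ideal. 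I would then introduce an auxiliary variable $z$, form the augmented ideal $\tilde{I} = \langle f_1,\ldots,f_s, gz - 1\rangle$, and compute its reduced Gr\"obner basis in an appropriately chosen grevlex order via Maple's F4 implementation; the desired contradiction is that this basis equals $\{1\}$.

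The main obstacle I anticipate is pure computational feasibility. As the authors emphasize, the resulting polynomials are of very high degree in many independent variables, and the Gr\"obner basis algorithm is exquisitely sensitive to the variable and monomial orderings and to the precise form in which the saturating polynomial is written. I would expect that a single monolithic computation is not tractable, and that the problem must be split into sub-cases, for instance according to which of $i+j$, $i+\ell$, $i+m$ is used as the active sum constraint, or exploiting the $S_4$-type symmetry permuting $(i,j,\ell,m)$. Additional care will be needed at the bridge between the finite-$N$ indicator-weighted sum and the infinite-dimensional rational formula, since correctly tracking the size of the admissible sublattice of $k$ in both coordinate directions is what ultimately pins down the explicit lower bound on $N$.
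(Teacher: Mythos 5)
Your overall architecture (contradiction, reduce to a polynomial in $k$, extract coefficients via Lemma \ref{lem:Reducek}, encode the constraints of $\mathcal{C}_N$ in a saturating polynomial $g$, and verify $\{1\}$ is the reduced Gr\"obner basis of the augmented ideal) matches the paper, and your variant of eliminating $\ell$ and of $g$ would be acceptable in principle. But there is a genuine gap at the bridge you yourself flag as delicate: you propose to work only on the sublattice of $k$ where all eight indicators $\1_{\Z^2_{0,N}}(i\pm k),\dots,\1_{\Z^2_{0,N}}(m\pm k)$ equal one, so that every $\mathbb{D}^k$ takes the clean ``infinite-dimensional'' form $\bar{\mathbb{D}}$. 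This set can be empty no matter how large $N$ is: for example if $i=(N,N)$, then $i+k\in\Z^2_{0,N}$ forces $k_1,k_2\leq 0$ while $i-k\in\Z^2_{0,N}$ forces $k_1,k_2\geq 0$, leaving no admissible $k$ at all. The obstruction is the position of $(i,j,\ell,m)$ relative to the truncation boundary, not the size of $N$, so no lower bound on $N$ ``triggers'' Lemma \ref{lem:Reducek} in the way you describe; your argument only covers quadruples well inside the truncated lattice, i.e.\ essentially the simpler case treated in Section \ref{subsec:simple-case}.

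The missing ingredient is the paper's key Lemma \ref{lem:key-lemma}: since $\mathbb{D}(i,k,r)$ is piecewise equal to one of $\{\mathbb{D}_+,\bar{\mathbb{D}},\mathbb{D}_-,0\}$ depending on whether $i$ lies in $\mathcal{S}^k\setminus\mathcal{S}^{-k}$, $\mathcal{S}^k\cap\mathcal{S}^{-k}$, $\mathcal{S}^{-k}\setminus\mathcal{S}^k$ or neither, one must find a full box $\{|k-k'|_{\ell^\infty}\leq a\}$ (with $a=10$, since $P$ has degree $19$ in $k$, so that $2a>19$ suffices for Lemma \ref{lem:Reducek}) on which each of the four functions $\mathbb{D}(i,\cdot,r),\dots,\mathbb{D}(m,\cdot,r)$ keeps a \emph{single} algebraic form, with at least one form nonzero. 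The paper proves this by partitioning $\Z^2_{0,N}$ into good sets $\mathcal{G}^{k'}_{\pm},\bar{\mathcal{G}}^{k'},\mathcal{G}^{k'}_0$ and a bad set $\mathcal{B}^{k'}$, running an inclusion--exclusion argument over nine diagonal choices of $k'$ whose bad sets have empty triple intersections (this is where $N>4(9a+8)=392$ comes from), and using the reflection $k'\mapsto(-k_1',k_2')$ to exclude the case that all four points land in $\mathcal{G}^{k'}_0$. As a consequence the Gr\"obner step is not one computation but $\binom{7}{4}-1=34$ of them, one for each unordered assignment of forms from $\{\mathbb{D}_+,\bar{\mathbb{D}},\mathbb{D}_-,0\}$ to the four indices; the case-splitting you anticipate (by symmetry of the constraints) is not the case-splitting actually required. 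Without a lemma of this type your proof does not reach the boundary quadruples, and the stated bound on $N$ has no derivation.
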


We now show how proposition \ref{prop:main-distinctness-prop} implies that ``most'' elements in the span of $\{\mathbb{D}^k\}$ are in fact distinct.

\begin{lemma}\label{lem:linear-combin-regular}
Assume the result of Proposition \ref{prop:main-distinctness-prop} holds, then there exists an open and dense set of matrices in $\mathfrak{h} = \Span\{\mathbb{D}^k\,:\, k\in\Z^2_{0,N}\}$ that are distinct in the sense of definition \ref{def:distinctless-reform}.
\end{lemma}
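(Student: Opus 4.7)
The plan is to exhibit the distinct elements of $\mathfrak{h}$ as the complement of a finite union of proper linear subspaces. Parameterize an arbitrary element $\mathbb{D} \in \mathfrak{h}$ by real coefficients $(\alpha_k)_{k \in \Z^2_{0,N}}$ via
\[
\mathbb{D} = \sum_{k \in \Z^2_{0,N}} \alpha_k \mathbb{D}^k,
\]
so that the diagonal entries are linear functions of $\alpha$:
\[
\mathbb{D}_i = \sum_{k \in \Z^2_{0,N}} \alpha_k \mathbb{D}^k_i, \qquad i \in \Z^2_{0,N}.
\]

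For a fixed quadruple $(i,j,\ell,m) \in \mathcal{C}_N$ with $i+j+\ell+m=0$, the failure of the distinctness inequality \eqref{eq:symm-distinct} at this quadruple is the vanishing of the linear functional
\[
L_{(i,j,\ell,m)}(\alpha) := \sum_{k \in \Z^2_{0,N}} \alpha_k \bigl(\mathbb{D}^k_i + \mathbb{D}^k_j + \mathbb{D}^k_\ell + \mathbb{D}^k_m\bigr).
\]
Proposition~\ref{prop:main-distinctness-prop} states precisely that at least one coefficient of this linear functional is nonzero, namely the one corresponding to the index $k$ it produces. Consequently $L_{(i,j,\ell,m)}$ is a nonzero linear functional on the coefficient space, and its kernel $\mathcal{N}_{(i,j,\ell,m)} \subset \mathfrak{h}$ is a proper linear subspace, hence closed and nowhere dense.

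Because $\Z^2_{0,N}$ is finite, the constraint set
\[
\bigl\{(i,j,\ell,m) \in \mathcal{C}_N : i+j+\ell+m=0\bigr\}
\]
is finite. The set of diagonal matrices in $\mathfrak{h}$ that are \emph{not} distinct is therefore the finite union
\[
\bigcup_{\substack{(i,j,\ell,m)\in\mathcal{C}_N \\ i+j+\ell+m=0}} \mathcal{N}_{(i,j,\ell,m)},
\]
a finite union of proper linear subspaces of $\mathfrak{h}$. Its complement, which consists exactly of the distinct matrices in $\mathfrak{h}$, is therefore open and dense in $\mathfrak{h}$.

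The main (and only) obstacle here is reducing distinctness to a linear nonvanishing condition and observing that Proposition~\ref{prop:main-distinctness-prop} is precisely the statement that each such linear functional is not identically zero; the remainder is the standard fact that a finite union of proper linear subspaces is nowhere dense.
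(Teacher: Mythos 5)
Your proof is correct and follows essentially the same route as the paper: in both cases Proposition \ref{prop:main-distinctness-prop} guarantees that, for each quadruple in $\mathcal{C}_N$, the linear functional $\alpha \mapsto \sum_k \alpha_k(\mathbb{D}^k_i+\mathbb{D}^k_j+\mathbb{D}^k_\ell+\mathbb{D}^k_m)$ is nonzero, and distinct matrices are obtained as the complement of the resulting finite union of hyperplanes (equivalently, the finite intersection of open dense sets used in the paper).
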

\begin{proof}
For each fixed $(i,j,\ell,m)\in \mathcal{C}_N$, we denote the vector
\[
	w_{(i,j,\ell,m)} := \left(\mathbb{D}_i^k + \mathbb{D}_j^k + \mathbb{D}_{\ell}^k + \mathbb{D}_m^k\,:\, k\in \Z^2_{0,N}\right) \in \R^{\Z^2_{0,N}},
\]
and for each $(i,j,\ell,m)\in \mathcal{C}_N$, let
\[
	\Gamma_{(i,j,\ell,m)} = \left\{\alpha\in\R^{\Z^2_{0,N}}\,:\, \alpha\cdot w_{(i,j,\ell,m)} \neq 0 \right\}.
\]
By Proposition \ref{prop:main-distinctness-prop}, $w_{(i,j,\ell,m)}$ is always a non-zero vector and hence $\Gamma_{(i,j,\ell,m)}$ is an open-dense set (being the complement of a plane). Since $\mathcal{C}_N$ is a finite set
\[
	\Gamma = \bigcap_{(i,j,\ell,m)\in \mathcal{C}_N}\Gamma_{(i,j,\ell,m)}
\]
is also open and dense in $\R^{\Z^2_{0,N}}$. It follows that for any $\alpha \in \Gamma$, the linear combination $\sum_{k} \alpha_k \mathbb{D}^k$, is distinct.
\end{proof}

We now briefly summarize how Proposition \ref{prop:main-distinctness-prop} completes the proof of the main results of our paper. 

\begin{proof}[\textbf{Proof of Theorems \ref{thm:Main-projective-spanning} and \ref{thm:mainchaos}}]
	Proposition \ref{prop:main-distinctness-prop} together with Lemma \ref{lem:linear-combin-regular} imply that $\mathfrak{h}$ contains a distinct matrix in the sense of definition \ref{def:distinctless-reform}. 
	Then, Corollary \ref{cor:disDon} and Proposition \ref{prop:sufficient-projectspan-NS} implies projective hypoellipticity for the Naver-Stokes equations, i.e. Theorem \ref{thm:Main-projective-spanning}, from which Theorem \ref{thm:mainchaos} follows by the results of [Theorem C; \cite{BBPS20}]; see Sections \ref{sec:FI} and \ref{sec:GNSEproSpan} for more information. 
\end{proof}

\subsection{The simpler ``infinite dimensional'' case}\label{subsec:simple-case}

Before continuing with the proof of proposition \ref{prop:main-distinctness-prop}, which is rather technical in nature due to the presence of the Galerkin truncation, it is very instructive to first see how the proof goes in the ``infinite dimensional'' case when the Galerkin truncation is removed and we instead consider the entire lattice $\Z^2_0$. The actual proof is similar in spirit to the one presented below, just repeated 35 times to cover various edge cases. The proof in this section has an accompanying Maple worksheet that will do the algebraic computations and compute the reduced Gr\"obner bases using exact arithmetic; see Appendix \ref{subsec:alg-geo-proof-2}. 

The proof full proof of Proposition \ref{prop:main-distinctness-prop} will make use of the algebraic structure of $\mathbb{D}(i,k,r) = \mathbb{D}_i^k$ (recall $r$ dependence is implicit) as a piecewise defined rational function on $(\Z^2_{0,N})^2$. The overall goal is to show that for each side length $r\neq 0$ that there do not exist any solutions $(i,j,\ell,m)\in \mathcal{C}_N$ with $i+j+\ell+m=0$, to the set of Diophantine equations
\begin{equation}\label{eq:Diophantine}
	\mathbb{D}^k_i + \mathbb{D}^k_j + \mathbb{D}_\ell^k + \mathbb{D}^k_m = 0, \quad \text{for all }k\in\Z^2_{0,N}.
\end{equation}

 As mentioned in Remark \ref{rem:infinite-algebra-diag} without the Galerkin cut-off $\mathbb{D}(i,k,r)$ takes a much simpler rational algebraic form that is not piecewise defined on the lattice,
\begin{equation}\label{eq:D-inf-def}
	\bar{\mathbb{D}}(i,k,r) = c_{i,k}c_{i+k,k} - c_{i,k}c_{i-k,k} = \langle i^\perp, k\rangle^2_r\left(\frac{1}{|k|_r^2} - \frac{1}{|i|_r^2}\right)\left(\frac{1}{|i-k|_r^2} - \frac{1}{|i+k|_r^2}\right).
\end{equation}
In light of this, our strategy is to extend the rational function
\[
	\overline{\mathbb{W}}(i,j,\ell,m,k,r):= \bar{\mathbb{D}}(i,k,r) + \bar{\mathbb{D}}(j,k,r) + \bar{\mathbb{D}}(\ell,k,r) + \bar{\mathbb{D}}(m,k,r)
\]
in 11 variables
\[
i = (i_1,i_2),\, j=(j_1,j_2),\, m = (m_1,m_2),\, \ell = (\ell_1,\ell_2),\, k = (k_1,k_2), \quad \text{and}\quad r,
\]
to the algebraically closed field $\C$, and show that such a system of algebraic equations is inconsistent. In particular, the numerator polynomial
\[
	P(i,j,\ell,m,k,r) = \mathrm{numer}\left(\overline{\mathbb{W}}(i,j,\ell,m,k,r)\right)
\]
belongs to $\C[i,j,\ell,m,k,r]$\footnote{Here we use the obvious shorthand $\C[i,j,\ell,m,k,r] = \C[i_1,i_2,j_1,j_2,\ell_1,\ell_2,m_1,m_2,k_1,k_2,r]$.}, has integer coefficients and vanishes whenever $\overline{\mathbb{W}}$ does. In many ways it is this polynomial and the fact that it has finite order and integer coefficients that allows for the use of a computer algebra proof that holds for arbitrary Galerkin truncation. 

The goal is to understand the common zeros of the collection of polynomials obtained by evaluating $k$ on various subsets of the lattice. Particularly for a given subset $K \subseteq \Z^2_{0}$, we consider the ideal of polynomials in the remaining $9$ variables $(i,j,\ell,m,r)\in \C^9$ generated by evaluating $P$ at each $k\in K$
\[
	I_{K} := \left\langle P_k\,:\,k\in K \right\rangle\subset\C[i,j,\ell,m,r], \quad \text{where}\quad P_k(i,j,\ell,m,r) = P(i,j,\ell,m,k,r).
\]
We also introduce the two polynomials $h_1,h_2$ describing the $i+j+\ell +m =0$ constraint
\[
h_1 := i_1 + j_1 + \ell_1 + m_1\quad \text{and}\quad h_2 := i_2 + j_2 + \ell_2 + m_2
\]
as well as the polynomial\footnote{Note the choice of $|\cdot|_r$ norm here, which is fundamental to ensuring our computational implementation converges for arbitrary $r$.}
\begin{equation}\label{eq:g-poly}
	g(i,j,\ell,m,r) := r^2|i|^2_r|j|^2_r|\ell|^2_r|m|^2_r(|i+j|^2_r + |\ell + m|^2_r)(|i+\ell|^2_r + |j+m|^2_r)(|i+m|^2_r+|j+\ell|^2_r)
\end{equation}
whose non-vanishing implies that 
\[
	r\neq 0, i\neq 0, j\neq 0,\ell \neq 0, m\neq 0
\]
and
\[
	(i+j,\ell +m) \neq (0,0),\, (i+ \ell,j+m)\neq (0,0),\, (i+m,j + \ell) \neq (0,0),
\]
and therefore $\{g\neq 0\}$ perfectly encodes the constraint set $\mathcal{C}_N$ along with the assumption that $r\neq 0$. The proof will be complete then, if we can find a set $K \subseteq \Z^2_{0}$ so that the affine variety generated by $I_{K},h_1,h_2$ is the same as that generated by $g$, namely
\[
	\mathbf{V}(I_{K},h_1,h_2) = \mathbf{V}(g).
\]
In order to show this, it will also be useful to freeze $i,j,\ell,m,r$ and treat $P$ as a polynomial in $k=(k_1,k_2)$ and regard the coefficients as polynomials in $\mathbb{C}[i,j,\ell,m,r]$. We denote the collection of these polynomials by
\[
	\{f_1,\ldots, f_s\} := \mathrm{coeffs}(P,\{k_1,k_2\}) \subseteq \mathbb{C}[i,j,\ell,m,r].
\]
Note that the number of polynomials in $\{f_1,\ldots, f_s\}$ only depends on the order of the polynomial $P$ in $k$ and is independent of any truncation.

By Lemma \ref{lem:Reducek} and the observation that $P$ is order $19$ in $k$, if there $\exists k' \in \Z^2_0$ such that $\set{k \in \Z^2_0 :\abs{k-k'}_{\ell^\infty} \leq 10} \subseteq K$, then the polynomial ideals are equal
\begin{align*}
I_K = \brak{f_1,...,f_s}.
\end{align*}
Since we can obviously find such a set $K$,  our goal reduces to showing that
\[
	\mathbf{V}(f_1,\ldots,f_s,h_1,h_2) = \mathbf{V}(g).
\]
By Theorem \ref{thm:SNull}, this is equivalent to showing that the saturated ideal $\langle f_1,\ldots,f_s,h_1,h_2\rangle:g^{\infty}$ satisfies
\[
\langle f_1,\ldots,f_s,h_1,h_2\rangle:g^{\infty} = \C[i,j,\ell,m,r],
\]
or more practically from a computational stand point, by Theorem \ref{thm:Saturation} that $\{1\}$ is the reduced Gr\"obner basis for the augmented ideal
\[
	\tilde{I} = \langle f_1,\ldots,f_s,h_1,h_2,zg-1\rangle \subseteq \C[i,j,\ell,m,r,z]
\]
with $z$ added as an extra variable. This can indeed be checked computationally in Maple. Specifically, we compute the reduced Gr\"obner basis of $\tilde{I}$ using the graded reverse lexicographical monomial order (or “grevlex”) and the variable ordering
\[
i_1<i_2<j_1<j_2<\ell_1<\ell_2<m_1<m_2<z < r
\]
using an implementation of the F4 algorithm \cite{F1999} (see Appendix \ref{sec:Groebner} and \ref{sec:code} ); the computation verifies that $G = \{1\}$, thereby concluding the proof.

\subsection{Treating the Galerkin truncation: Proof of proposition \ref{prop:main-distinctness-prop}}\label{subsec:proof-of-distinctness}

As already mentioned, the Galerkin truncation makes $\mathbb{D}(i,k,r) = \mathbb{D}_i^k$ a piecewise defined rational function (depending on the choice of $k$) and so great care must be taken to consider all the possible combinations algebraic forms on different partitions of the lattice to carry out a similar argument to the one given above.

\begin{proof}[Proof of proposition \ref{prop:main-distinctness-prop}] To begin, it is convenient to write $\mathbb{D}^k_i$ in a proper piecewise defined sense. To do this we will find it convenient to define the set
\[
	\mathcal{S}^k := \Z^2_{0,N}\cap\{\Z^2_{0,N} + k\} \subseteq \Z^2_{0,N},
\]
and denote
\[
	\mathbb{D}_+(i,k,r) := c_{i,k}c_{i+k,k} = \langle i^\perp,k\rangle_r^2\left(\frac{1}{|k|_r^2} - \frac{1}{|i|_r^2}\right)\left(\frac{1}{|k|_r^2}-\frac{1}{|i+k|^2_r}\right)
\]
as well as
\[
	\mathbb{D}_-(i,k,r) := - \mathbb{D}_+(i,-k,r) = \langle i^\perp,k\rangle_r^2\left(\frac{1}{|k|_r^2} - \frac{1}{|i|_r^2}\right)\left(\frac{1}{|i-k|_r^2}-\frac{1}{|k|^2_r}\right)
	\]
and
\[
	\bar{\mathbb{D}}(i,k,r) := \mathbb{D}_+(i,k,r) + \mathbb{D}_-(i,k,r) = \langle i^\perp, k\rangle^2_r\left(\frac{1}{|k|_r^2} - \frac{1}{|i|_r^2}\right)\left(\frac{1}{|i-k|_r^2} - \frac{1}{|i+k|_r^2}\right).
\]
Then $\mathbb{D}(i,k,r)$ can be written piecewise as
\[
	\mathbb{D}(i,k,r) = \begin{cases}
	\mathbb{D}_+(i,k,r) & i \in \mathcal{S}^k\backslash \mathcal{S}^{-k}\\
	\bar{\mathbb{D}}(i,k,r) & i\in \mathcal{S}^k\cap\mathcal{S}^{-k}\\
	\mathbb{D}_-(i,k,r) & i \in \mathcal{S}^{-k}\backslash \mathcal{S}^{k}\\
	0 & i\in \Z^{2}_{0,N}\backslash(\mathcal{S}^k\cup\mathcal{S}^{-k})
	\end{cases}.
\]
This means that for each fixed $i\in \Z^{2}_{0,N}$, $\mathbb{D}(i,k,r)$ is obtained by evaluating exactly one of the four exact rational algebraic functions belonging to $\{\mathbb{D}_+,\bar{\mathbb{D}},\mathbb{D}_-,0\}$, moreover it is not hard to show that for each such $i\in \Z^2_{0,N}$ there is always a suitably large set $K$ such that $\mathbb{D}(i,k,r)$ takes the same algebraic form for all $k\in K$. Based off this idea, it is the following key lemma that allows us to treat the piecewise rational behavior of the sum 
\[
	\mathbb{W}(i,j,\ell,m,k,r):= \mathbb{D}(i,k,r) + \mathbb{D}(j,k,r) + \mathbb{D}(\ell,k,r) + \mathbb{D}(m,k,r)
\] in a purely algebraic fashion, as long as $N$ is large enough.
\begin{lemma}\label{lem:key-lemma}
Let $a\geq 1$ and suppose that $N > 4(9a + 8)$, then for each fixed $(i,j,\ell,m)\in \mathcal{C}_N$, there exists a $k^\prime\in \Z^2_{0,N-a}$ and four rational functions $\{\mathbb{D}_1,\mathbb{D}_2,\mathbb{D}_3,\mathbb{D}_4\}$,
each taking one of the four possible forms $\{\mathbb{D}_+,\bar{\mathbb{D}},\mathbb{D}_-,0\}$, with at least one the $\mathbb{D}_i \neq 0$ such that for all $k \in \Z^2_0$ with $|k - k^\prime|_{\ell^\infty} \leq a$, $\mathbb{W}$ takes the form
\[
	\mathbb{W}(i,j,\ell,m,k,r) = \mathbb{D}_1(i,k,r) + \mathbb{D}_2(j,k,r) + \mathbb{D}_3(\ell,k,r) + \mathbb{D}_4(m,k,r).
\]
\end{lemma}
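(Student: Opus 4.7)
The plan is to prove Lemma \ref{lem:key-lemma} by a counting/pigeonhole argument in $k'$-space. The key observation is that for each target $p \in \{i, j, \ell, m\}$, the piecewise form of $\mathbb{D}(p, k, r)$ among $\{\mathbb{D}_+, \bar{\mathbb{D}}, \mathbb{D}_-, 0\}$ is determined by the pair of indicators $(\1_{\Z^2_{0,N}}(p + k), \1_{\Z^2_{0,N}}(p - k))$. Hence the algebraic form of $\mathbb{W}$ is constant on the cube $k' + [-a, a]^2$ precisely when, for each of the eight pairs $(p, \sigma) \in \{i,j,\ell,m\} \times \{\pm 1\}$, the indicator $\1_{\Z^2_{0,N}}(p + \sigma k)$ does not flip as $k$ ranges over the cube.

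First, for each $(p, \sigma)$ I would define the \emph{transition region}
\[
T_{p, \sigma} := \bigl\{ k' \in \Z^2 \,:\, |p + \sigma k'|_{\ell^\infty} \leq a \ \text{ or } \ N - a < |p + \sigma k'|_{\ell^\infty} \leq N + a \bigr\}.
\]
The first piece is a cube of side $2a+1$ (centered at $-\sigma p$), and the second is a square annulus of width $2a$ about the outer boundary $|\,\cdot\,|_{\ell^\infty} = N$, so an elementary lattice-point count gives $|T_{p,\sigma}| \leq (2a+1)^2 + 8a(2N+1)$. Summing over the eight choices of $(p, \sigma)$, the total ``ambiguous'' set $\mathcal{B} := \bigcup_{p,\sigma} T_{p, \sigma}$ satisfies $|\mathcal{B}| \leq 8[(2a+1)^2 + 8a(2N+1)]$.

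Second, to ensure that at least one of the four selected rational functions is not identically zero, I would also exclude the ``all-exterior'' set $\mathcal{Z}$ of $k' \in \Z^2_{0,N-a} \setminus \mathcal{B}$ for which $|p + \sigma k'|_{\ell^\infty} > N$ for every $(p, \sigma)$. A geometric observation is that such $k'$ forces each of $i, j, \ell, m$ to lie near a ``corner'' of $[-N, N]^2$, and exploiting the constraint $i + j + \ell + m = 0$ together with the non-degeneracy conditions in $\mathcal{C}_N$, one can bound $|\mathcal{Z}|$ by a lower-order term than $|\mathcal{B}|$.

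Third, I would compare the total bad set $|\mathcal{B}| + |\mathcal{Z}|$ with $|\Z^2_{0, N-a}| = (2(N-a)+1)^2 - 1$. A direct algebraic inequality using $N > 4(9a + 8)$ shows the latter strictly exceeds the former, producing a valid $k^\prime \in \Z^2_{0, N-a} \setminus (\mathcal{B} \cup \mathcal{Z})$. For this $k'$, the indicator $\1_{\Z^2_{0,N}}(p + \sigma k)$ is constant on the cube for each $(p,\sigma)$, so $\mathbb{D}(p, k, r)$ reduces to a single member of $\{\mathbb{D}_+, \bar{\mathbb{D}}, \mathbb{D}_-, 0\}$ throughout, and at least one of the four resulting rational functions is not $0$ by virtue of $k' \notin \mathcal{Z}$.

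The main obstacle is the explicit bookkeeping needed to reach the precise constant $4(9a+8)$, and in particular the bound on $|\mathcal{Z}|$. Without careful use of the constraint $i + j + \ell + m = 0$ together with $\mathcal{C}_N$, the all-exterior set could a priori be on the order of $N^2$ (since corner configurations like $p = (N, \pm N)$ admit genuinely all-exterior $k'$ in a full quadrant); reducing this to a lower-order contribution via the four-way intersection is the technical heart of the argument.
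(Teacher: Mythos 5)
Your reduction of the problem to the constancy of the eight indicators $\1_{\Z^2_{0,N}}(p+\sigma k)$, $p\in\{i,j,\ell,m\}$, $\sigma=\pm1$, over the cube $|k-k'|_{\ell^\infty}\leq a$ is correct, and your bound $|\mathcal{B}|=O(aN)$ for the transition set is fine. The genuine gap is the second step: the claim that the all-exterior set $\mathcal{Z}$ can be bounded by a term of lower order than $|\mathcal{B}|$ is false. Take $i=(N,N-1)$, $j=(N-1,N)$, $\ell=(-N,-N)$, $m=(-N+1,-N+1)$: the sum is zero and all three pairings $(i+j,\ell+m)$, $(i+\ell,j+m)$, $(i+m,j+\ell)$ are nonzero, so this quadruple lies in $\mathcal{C}_N$. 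For every $k'$ with $k_1'\geq a+2$, $k_2'\leq -(a+2)$, $|k'|_{\ell^\infty}\leq N-a$, one checks directly that $p+k$ and $p-k$ lie outside $\Z^2_{0,N}$ for all four points and all $k$ in the cube, so this entire quadrant-like region (and its opposite) is all-exterior; hence $|\mathcal{Z}|\gtrsim 2(N-2a)^2$, roughly half of $\Z^2_{0,N-a}$, not $o(aN)$. With the corrected bound your inequality becomes roughly $4N^2 > 2N^2 + 128aN+\dots$, i.e.\ $N\gtrsim 64a$, which is \emph{not} implied by the hypothesis $N>4(9a+8)=36a+32$ (e.g.\ $a=10$, $N=393$, precisely the case needed for $N\geq 392$), so even the repaired counting argument does not prove the lemma as stated.

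For comparison, the paper avoids the counting over all of $k'$-space entirely: it places nine explicit cubes $\mathcal{K}^{k'_n}$ along the diagonal, shows the associated bad sets $\mathcal{B}_n$ have empty triple intersections, and applies inclusion--exclusion to the counting measure of the four points to pigeonhole a $k'_n$ whose cube sees a consistent algebraic form for all of $i,j,\ell,m$; the possibility that all four forms are zero is then eliminated not by counting but by a reflection trick, replacing $k'$ with $\hat k'=(-k_1',k_2')$, which swaps the two zero-form corners $\mathcal{G}^{k'}_0$ into the nonzero-form regions $\mathcal{G}^{\hat k'}_+\cup\mathcal{G}^{\hat k'}_-$. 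Something of that kind (or a joint analysis showing that whenever $\mathcal{Z}$ is large the four points sit near two opposite corners, and then restricting the count to the complementary quadrants) is exactly what your argument is missing; as written, the step you defer as ``the technical heart'' is where the proof breaks.
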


Before proving Lemma \ref{lem:key-lemma} (which is done in the following subsection), lets see how to use it to complete the proof of Proposition \ref{prop:main-distinctness-prop}.
Indeed, with this Lemma in hand, the proof now follows along the similar lines as that of Section \ref{subsec:simple-case} above. Fix $(i,j,\ell,m)\in \mathcal{C}_N$ and $r\neq 0$ with $i+j+\ell+ m =0$ and assume by contradiction that
\[
	\mathbb{W}(i,j,\ell,m,k,r) = 0 \quad \text{for all}\quad k \in \Z^2_{0,N}.
\]
By Lemma \ref{lem:key-lemma}, let
\[
	K := \{k\in\Z^2_{0,N}\,:\,|k-k^\prime|_{\ell^\infty} \leq a\}
\] 
so that for all $k\in K$, $\mathbb{W}(i,j,\ell,m,k,r)$ is a given fixed rational function and therefore we can define as in Section \ref{subsec:simple-case} the numerator polynomial
\[
	P(i,j,\ell,m,k,r) = \mathrm{numer}\left(\mathbb{W}(i,j,\ell,m,k,r)\right),
\]
and observe that by assumption we have
\[
	(i,j,\ell,m,r) \in \mathbf{V}(I_K,h_1,h_2), \quad \text{where}\quad I_K := \langle P_k\,:\, k\in K\rangle.
\]
Therefore the proof is complete if we can show that $\mathbf{V}(I_{K},h_1,h_2) = \mathbf{V}(g)$, where $g$ is defined by \eqref{eq:g-poly}. 
By Lemma \ref{lem:Reducek} and Lemma \ref{lem:key-lemma}, using that the $P$ is at most order 19 in $k$, we can choose $a =10$ so that $2a > 19$, and therefore the polynomials $\{f_1,\ldots,f_s\} = \mathrm{coeffs}(P,\{k_1,k_2\})$ generate the ideal $I_{K}$. We can now follow the exact same procedure as in Section \ref{subsec:simple-case} to show that the reduced Gr\"obner basis of the extended ideal $\tilde{I} = \langle f_1,\ldots,f_s,h_1,h_2,zg-1\rangle$ is $\{1\}$. This can be verified computationally for general algebraic forms
\begin{equation}\label{eq:symmetry-W}
	\mathbb{W}(i,j,\ell,m,k,r)= \mathbb{D}_1(i,k,r) + \mathbb{D}_2(j,k,r) + \mathbb{D}_3(\ell,k,r) + \mathbb{D}_4(m,k,r)
\end{equation}
by computing the Gr\"obner basis of $\tilde{I}$ for all possible choices of algebraic functions $(\mathbb{D}_1,\mathbb{D}_2,\mathbb{D}_3,\mathbb{D}_4)$ sampled from the set $\{\mathbb{D}_+,\bar{\mathbb{D}},\mathbb{D}_-,0\}$ (excluding $(0,0,0,0)$). Due to symmetry of the sum \eqref{eq:symmetry-W} and the constraint $i+j+\ell+m =0$, up to relabeling $i,j,\ell,m$ we can disregard the order in which we consider $\{\mathbb{D}_1,\mathbb{D}_2,\mathbb{D}_3,\mathbb{D}_4\}$ and therefore the total number of possible ideals we have to check is just the number of ways to draw an unordered sample $\{\mathbb{D}_1,\mathbb{D}_2,\mathbb{D}_3,\mathbb{D}_4\}$ of $4$ things from a the set $\{\mathbb{D}_+,\bar{\mathbb{D}},\mathbb{D}_-,0\}$ with replacement (excluding $\{0,0,0,0\}$). This means that there are
\[
	{ 4 + 4-1\choose 4}-1 = { 7\choose 4}-1 = 34
\]
different ideals $\tilde{I}$ to check. This seemingly tedious task is carried effortlessly by Maple (see Appendix \ref{subsec:alg-geo-proof-3} for the code) showing that the reduced Gr\"obner basis for $\tilde{I}$ in every case is $\{1\}$. This concludes the proof of Proposition \ref{prop:main-distinctness-prop} (and hence of Theorem \ref{thm:mainchaos}). 
\end{proof}

\subsubsection{Proof of Lemma \ref{lem:key-lemma}}
We now turn to the proof of the key Lemma \ref{lem:key-lemma}.

\begin{proof}[Proof of Lemma \ref{lem:key-lemma}]

Fix an integer $a> 0$ and for each $k^\prime\in\Z^{2}_{0,N-a}$, let $\mathcal{K}^{k^\prime} := \{k\in\Z^{2}_{0,N}\,:\, |k-k^\prime|_{\ell^{\infty}}\leq a\}$ and define the following sets
\[
	\mathcal{G}^{k^\prime}_+ := \bigcap_{k \in \mathcal{K}^{k^\prime}}\mathcal{S}^{k}\backslash \mathcal{S}^{-k},\quad
	\mathcal{G}^{k^\prime}_- := \bigcap_{k \in \mathcal{K}^{k^\prime}}\mathcal{S}^{-k}\backslash \mathcal{S}^{k},\quad
	\bar{\mathcal{G}}^{k^\prime} := \bigcap_{k \in\mathcal{K}^{k^\prime}}\mathcal{S}^{k}\cap \mathcal{S}^{-k},\quad \mathcal{G}^{k^\prime}_0 := \bigcap_{k \in\mathcal{K}^{k^\prime}}\Z^{2}_{0,N}\backslash(\mathcal{S}^k\cup\mathcal{S}^{-k}),
\]
where $\mathbb{D}(i,k,r)$ takes a specific algebraic form ($\{\mathbb{D}_+,\mathbb{D}_-,\bar{\mathbb{D}},0\}$ respectively) uniformly for $k$ satisfying $|k-k^\prime|_{\ell^\infty}\leq a$. This means we have a ``good'' set
\[
\mathcal{G}^{k^\prime} := \mathcal{G}^{k^\prime}_+\cup \mathcal{G}^{k^\prime}_-\cup \bar{\mathcal{G}}^{k^\prime}\cup \mathcal{G}^{k^\prime}_0
\]
where for each fixed $i\in \mathcal{G}^{k^\prime}$, $\mathbb{D}(i,k,r)$ takes a consistent algebraic form for all $|k-k^\prime|_{\ell^\infty}\leq a$, and a remaining ``bad'' set 
\[
\mathcal{B}^{k^\prime} := \Z^2_{0,N}\backslash\mathcal{G}^{k^\prime}
\]
 where $\mathbb{D}(i,k,r)$ doesn't take a consistent algebraic form for all $|k-k^\prime|_{\ell^\infty}\leq a$ (see Figure \ref{fig:good-bad-scheme} for a illustration of these sets in the lattice).

\begin{figure}[H]
\centering
\def\svgwidth{.6\textwidth}
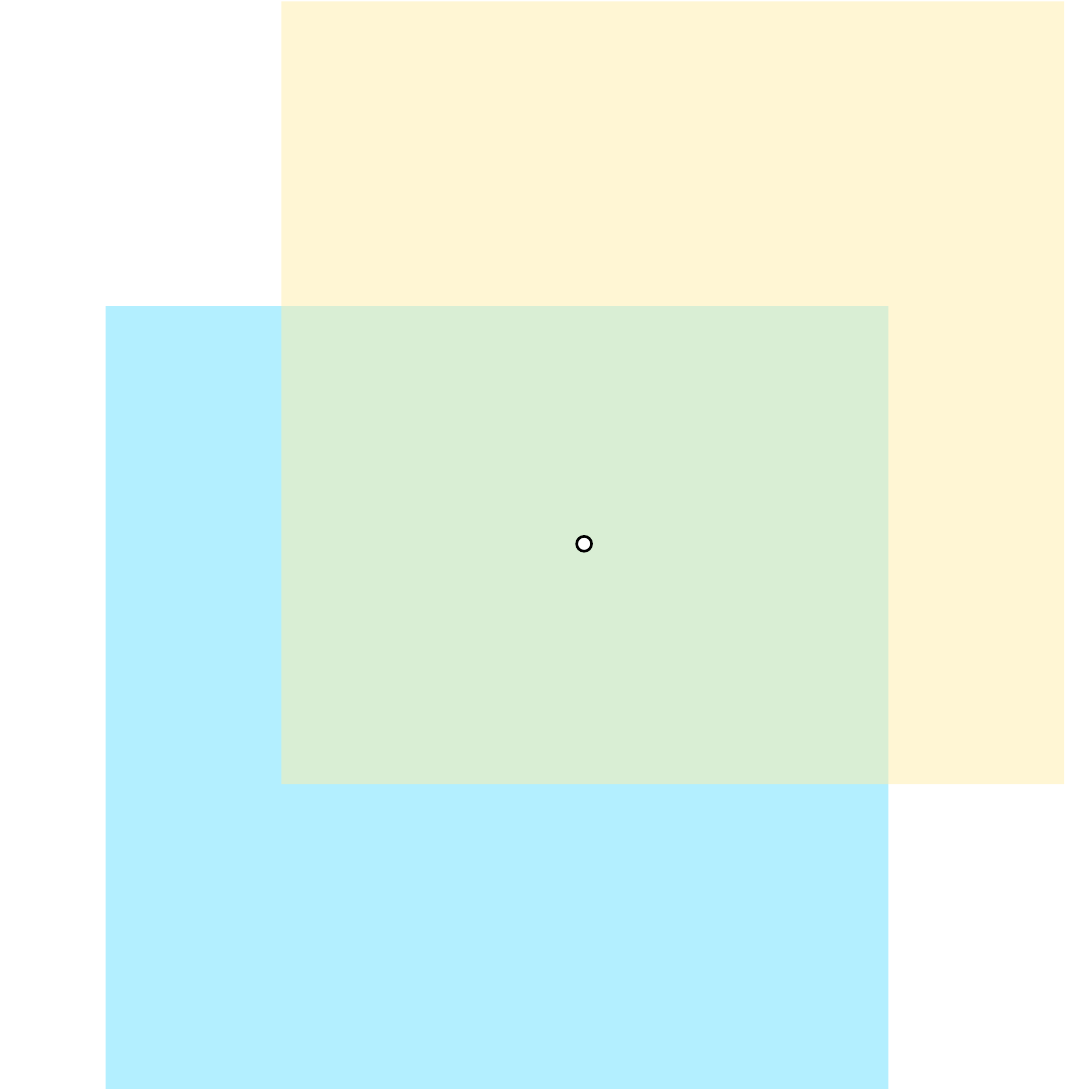
\caption{A schematic illustration for the partition of $\Z^2_{0,N}$ into good $\mathcal{G}^{k^\prime} := \mathcal{G}^{k^\prime}_+\cup \mathcal{G}^{k^\prime}_-\cup \bar{\mathcal{G}}^{k^\prime}\cup \mathcal{G}^{k^\prime}_0$ and bad sets $\mathcal{B}^{k^\prime}$ for a given $k^\prime$. The set $\mathcal{G}^{k'}_+$ is the upper right L-shaped zone (shaded yellow), the set $\mathcal{G}^{k'}_-$ is the lower left L-shaped zone (shaded blue), and the set $\bar{\mathcal{G}}^{k'}$ is the center square with two punctures (shaded green). The upper left and lower right corners (not shaded) are the two pieces of $\mathcal{G}^{k'}_0$. The bad set consists of several pieces (all shaded red): the the connection zones between all of the $\mathcal{G}$ sets as well as the squares $\mathcal{K}^{\pm k'}$ shown here embedded in $\bar{\mathcal{G}}^{k'}$.} \label{fig:good-bad-scheme}
\end{figure}

For a given $\{i,j,\ell,m\}\in \Z^{2}_{0,N}$ our goal is to find a  $k^\prime$ such that $\{i,j,\ell,m\}\subseteq \mathcal{G}^{k^\prime}$, such that not all $\{i,j,\ell,m\}$ belong to $\mathcal{G}^{k^\prime}_0$. This is the content of the following Lemma.
\begin{lemma}\label{lem:inclusion-exclusion}
Suppose $N > 4(9a + 8)$, then for every $\{i,j,\ell,m\}\subseteq \Z^2_{0,N}$ there exists a $k^\prime\in \Z^2_{0,N}$ with $|k^\prime|_{\ell^\infty}\leq \lfloor N/2\rfloor-a$ such that $\{i,j,\ell,m\}\subseteq \mathcal{G}^{k^\prime}$.
\end{lemma}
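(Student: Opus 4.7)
The plan is to execute a pigeonhole / counting argument on the candidate region
\[
R := \{k' \in \Z^2 : |k'|_{\ell^\infty} \leq \lfloor N/2 \rfloor - a\}, \qquad |R| \geq (N - 2a)^2.
\]
For each $p \in \{i, j, \ell, m\}$, I would define the bad set $\mathcal{B}^p := \{k' \in R : p \notin \mathcal{G}^{k'}\}$ and aim to show $\bigcup_p \mathcal{B}^p \subsetneq R$; any $k'$ in the complement is then a valid choice. This reduces the whole statement to an explicit geometric counting estimate in $\Z^2$.

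First I would characterize $\mathcal{B}^p$ pointwise. Observe that $p \in \mathcal{G}^{k'}$ iff both indicators $\mathbf{1}_{p \in \mathcal{S}^k}$ and $\mathbf{1}_{p \in \mathcal{S}^{-k}}$ are constant as $k$ ranges over $\mathcal{K}^{k'}$. Since $p \in \mathcal{S}^k \Leftrightarrow |p - k|_{\ell^\infty} \leq N$ and $k \neq p$, and since $|p - k|_{\ell^\infty}$ sweeps the interval $[|p - k'|_{\ell^\infty} - a, \, |p - k'|_{\ell^\infty} + a]$ as $k$ varies in $\mathcal{K}^{k'}$, constancy breaks precisely when $|p - k'|_{\ell^\infty} \in [N - a + 1, N + a]$ (the boundary-of-$\Z^2_{0,N}$ transition) or $k' \in \mathcal{K}^p$ (the $k = p$ transition). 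An analogous analysis for $-p$ gives the containment
\[
\mathcal{B}^p \cap R \subseteq (\mathcal{A}_+^p \cup \mathcal{A}_-^p \cup \mathcal{K}^p \cup \mathcal{K}^{-p}) \cap R,
\]
where $\mathcal{A}_\pm^p := \{k' : |k' \mp p|_{\ell^\infty} \in [N - a + 1, N + a]\}$ is a square annulus of thickness $2a$ centered at $\pm p$.

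Next I would bound each piece. The small-box terms satisfy $|\mathcal{K}^{\pm p} \cap R| \leq (2a + 1)^2$. Each annulus $\mathcal{A}_\pm^p$ decomposes into four ``side strips'' of width $2a$ extending in the $\pm x_1$ and $\pm x_2$ directions from $\pm p$; each strip meets $R$ in at most $2a(N - 2a + 1)$ points. The key geometric observation is that, because $R$ is a box centered at the origin and $\pm p \in \Z^2_{0,N}$, at most two of the four strips can actually reach $R$: the ``right'' strip $\{k' : k_1' - p_1 \in [N-a+1, N+a]\}$ meets $R$ only if $p_1 \leq -N/2$, and analogously for the other three directions, so at most one horizontal and one vertical side of $\mathcal{A}_\pm^p$ can contribute. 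This gives $|\mathcal{A}_\pm^p \cap R| \leq 4a(N - 2a + 1)$, and hence $|\mathcal{B}^p \cap R| \leq 8a(N - 2a + 1) + 2(2a+1)^2$. Summing over the four points,
\[
\Bigl|\bigcup_{p \in \{i,j,\ell,m\}} \mathcal{B}^p \Bigr| \leq 32a(N - 2a + 1) + 8(2a+1)^2 .
\]
A direct algebraic check using the hypothesis $N > 4(9a + 8) = 36a + 32$ verifies $(N - 2a)^2$ strictly exceeds this quantity, so the complement of the bad set in $R$ is non-empty and the desired $k'$ exists.

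The main obstacle will be the ``at most two of four sides'' geometric reduction in the annulus estimate: a naive union bound over all four sides of each $\mathcal{A}_\pm^p$ doubles the main term and forces a threshold much larger than $36a + 32$. Making this reduction precise, and carefully tracking the minor boundary effects from $\lfloor N/2 \rfloor$ versus $N/2$, is what makes the clean stated threshold $N > 4(9a+8)$ go through.
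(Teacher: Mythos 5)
Your argument is correct, but it is a genuinely different proof from the paper's. The paper does not count bad $k'$ at all: it picks nine explicit centers $k'_n=(a+2(n-1)(a+1),a+2(n-1)(a+1))$ along the diagonal, observes that the corresponding bad sets $\mathcal{B}_n$ can intersect pairwise but have empty triple intersections once $|k'_9|_{\ell^\infty}+a<N/2$ (which is exactly where $N>4(9a+8)$ comes from), and then applies inclusion--exclusion to the counting measure $\gamma_{i,j,\ell,m}$: if every admissible $k'$ were bad for one of the four points, then $\gamma(\bigcup_{n\le 9}\mathcal{B}_n)\ge 9-4=5>4$, a contradiction. Your route instead bounds the total number of bad $k'$ in the box $R$ directly, using the (correct) containment of each point's bad set in two square annuli of thickness $2a$ centered at $\pm p$ plus the two boxes $\mathcal{K}^{\pm p}$, together with the observation that since $|p_1|,|p_2|\le N$ at most one of the left/right and one of the top/bottom sides of each annulus can reach $R$; I checked the resulting inequality $(N-2a)^2>32a(N-2a+1)+8(2a+1)^2$ at $N=36a+33$ (surplus $36a^2+1124a+1081$) and it is monotone in $N$ thereafter, so the pigeonhole closes with the same threshold. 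The trade-off: the paper's argument is softer and needs no lattice-point counting or annulus geometry, only the vanishing of triple intersections, which makes it shorter to verify rigorously; yours is more quantitative and actually shows the good $k'$ are the overwhelming majority of $R$ (a vanishing fraction is bad as $N\to\infty$), at the cost of the more delicate ``at most two sides'' reduction and floor-function bookkeeping. One small point to patch: the lemma requires $k'\in\Z^2_{0,N}$, so you must also exclude $k'=0$ from your good set, which your large surplus trivially permits but which should be said.
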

\begin{proof}
Some of this is best seen by picture. First, we note that the requirement that $|k^\prime|_{\ell^\infty}\leq \lfloor N/2\rfloor-a$ is solely so that the sets $\mathcal{K}^{\pm k^\prime}$ stay with in the inner-most rectangle (this is to avoid keeping track of much more complicated intersections between bad sets).  For simplicity we denote for each $n\geq 1$ the diagonal element $k^\prime_n = \left(a + 2(n-1)(a+1),a + 2(n-1)(a+1)\right)$ and let $\mathcal{B}_n := \mathcal{B}^{k^\prime_n}$. We note that each $1\leq n_1< n_2$, $\mathcal{B}_{n_1}$ always has has a non-trivial overlap with $\mathcal{B}_{n_2}$, 
\[
\mathcal{B}_{n_1}\cap\mathcal{B}_{n_2} \neq \emptyset, \quad \text{if}\quad 1\leq n_1 < n_2.
\]
However, if $N$ is big enough, any triple intersection is empty (see Figure \ref{fig:bad-sets} for a depiction of this)
\[
	\mathcal{B}_{n_1}\cap\mathcal{B}_{n_2}\cap \mathcal{B}_{n_3} = \emptyset, \quad \text{if}\quad 1\leq n_1<n_2<n_3.
\]
By ``big enough'', we mean that $\max_i |k^\prime_{n_i}|_{\ell^\infty} \leq \lfloor N/2\rfloor-a$ so that sets $\mathcal{K}^{k^\prime_{n_i}}\cup \mathcal{K}^{-k^\prime_{n_i}}$ are all disjoint and don't overlap with any of the outer bands.  

\begin{figure}[H]
\centering
\def\svgwidth{.6\textwidth}
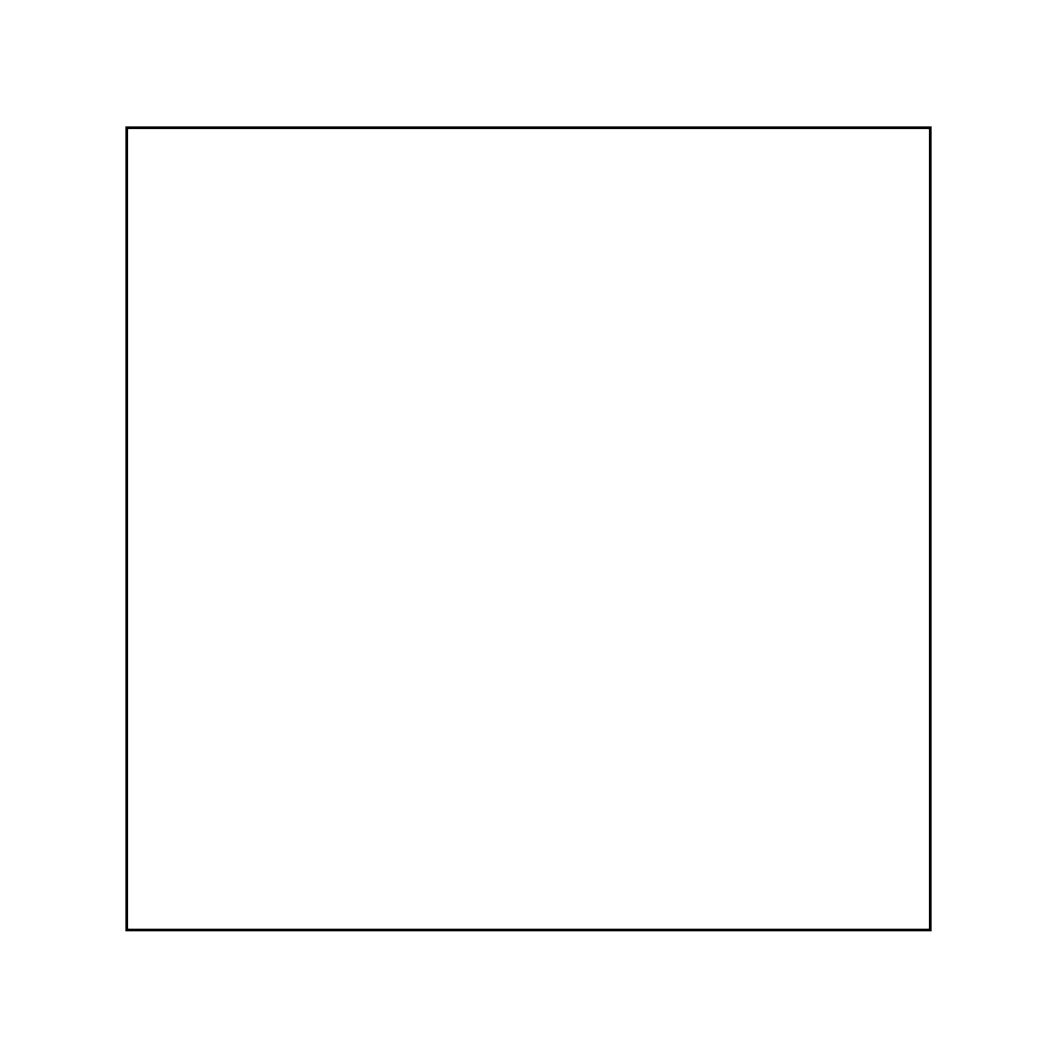
\caption{An illustration of the empty triple intersection $\mathcal{B}_{1}\cap \mathcal{B}_2\cap \mathcal{B}_3 = \emptyset$. The gaps between sets are exaggerated for visual clarity.}
\label{fig:bad-sets}
\end{figure}

 For each $i\in \Z^{2}_{0,N}$, denote $\delta_i$ the delta measure on $\Z^2_{0,N}$ concentrated at $i$, defined for each $A\subseteq \Z^2_{0,N}$ by
\[
	\delta_i(A) = \begin{cases}
	1 & i\in A\\
	0 & i \notin A
	\end{cases}.
\]
Likewise for any four lattice points $\{i,j,\ell,m\}\subseteq \Z^2_{0,N}$, denote the  counting measure
\[
	\gamma_{i,j,\ell,m} := \delta_i + \delta_j + \delta_\ell + \delta_m,
\]
which counts how many of the lattice points $\{i,j,\ell,m\}$ belong to a given subset of the lattice. Note that for any $A\subseteq \Z^2_{0,N}$, $0\leq \gamma_{i,j,\ell,m}(A) \leq 4$.

We now work by contradiction and assume that there exists four lattice points $\{i,j,\ell,m\}\in \Z^{2}_{0,N}$ such that for every $k^\prime\in \Z^{2}_{0,N}$ with $|k^\prime|_{\ell^\infty}\leq \lfloor N/2\rfloor-a$, at least one of the lattice points $\{i,j,\ell,m\}$ belongs to $\mathcal{B}^{k^\prime}$. This implies that for each $n\geq 1$ we have
\[
	\gamma_{i,j,\ell,m}(\mathcal{B}_n) \geq 1.
\]
By the inclusion-exclusion principle (and the fact that the only non-trivial intersections are pairwise intersections), we have that for each $\{i,j,\ell,m\}\subseteq \Z^2_{0,N}$ and $M > 1$
\begin{equation}
\begin{aligned}\label{eq:inclusion-exclusion}
\gamma_{i,j,\ell,m}\left(\bigcup_{1\leq n \leq M} \mathcal{B}_n\right) &= \sum_{n=1}^{M} \gamma_{i,j,\ell,m}\left(\mathcal{B}_n\right) -\sum_{1\leq n_1 < n_2 \leq M}\gamma_{i,j,\ell,m}(\mathcal{B}_{n_1}\cap\mathcal{B}_{n_2})\\
 &= \sum_{n=1}^{M} \gamma_{i,j,\ell,m}\left(\mathcal{B}_n\right) - \gamma_{i,j,\ell,m}\left(\bigcup_{1\leq n_1 < n_2\leq M}\mathcal{B}_{n_1}\cap\mathcal{B}_{n_2}\right)\\
 & \geq M - 4.
\end{aligned}
\end{equation}
Choosing $M = 9$, then implies that
\[
	\gamma_{i,j,\ell,m}\left(\bigcup_{1\leq n \leq M} \mathcal{B}_n\right) \geq 5
\]
which is clearly a contradiction, since $\gamma_{i,j,\ell,m}\leq 4$. Since we had to take $M = 9$, this means that we need to have 
\[
	2(|k^\prime_{9}|_{\ell^\infty} + a) < N,
\]
which is the same as requiring that $N > 4(9a + 8)$.
\end{proof}

To complete the proof of Lemma \ref{lem:key-lemma}, we may assume with out loss of generality that not all $\{i,j,\ell,m\}$ belong to $\mathcal{G}_0^{k^\prime}$, since if that were the case we could replace $k^\prime$ with its horizontal reflection $\hat{k}^\prime = (-k_1^\prime,k_2^\prime)$, and obtain $\{i, j,\ell, m\}\in \mathcal{G}^{\hat{k}^\prime}_+\cap\mathcal{G}^{\hat{k}^\prime}_-\subseteq \mathcal{G}^{\hat{k}^\prime}$ (see Figure \ref{fig:reflection} for a visual proof of this). Then it is clear that Lemma \ref{lem:inclusion-exclusion} implies that there are rational functions $\{\mathbb{D}_1,\mathbb{D}_2,\mathbb{D}_3,\mathbb{D}_4\}$ with each $\mathbb{D}_i$ belonging to $\{\mathbb{D}_+,\mathbb{D}_-,\bar{\mathbb{D}},0\}$ (excluding the case where they are all zero) such that
\[
	\mathbb{W}(i,j,\ell,m,k,r) = \mathbb{D}_1(i,k,r) + \mathbb{D}_2(j,k,r) + \mathbb{D}_3(\ell,k,r) + \mathbb{D}_4(m,k,r).
\]
\begin{figure}[H]
\centering
\def\svgwidth{.9\textwidth}
\begingroup%
  \makeatletter%
  \providecommand\color[2][]{%
    \errmessage{(Inkscape) Color is used for the text in Inkscape, but the package 'color.sty' is not loaded}%
    \renewcommand\color[2][]{}%
  }%
  \providecommand\transparent[1]{%
    \errmessage{(Inkscape) Transparency is used (non-zero) for the text in Inkscape, but the package 'transparent.sty' is not loaded}%
    \renewcommand\transparent[1]{}%
  }%
  \providecommand\rotatebox[2]{#2}%
  \newcommand*\fsize{\dimexpr\f@size pt\relax}%
  \newcommand*\lineheight[1]{\fontsize{\fsize}{#1\fsize}\selectfont}%
  \ifx\svgwidth\undefined%
    \setlength{\unitlength}{693.34933063bp}%
    \ifx\svgscale\undefined%
      \relax%
    \else%
      \setlength{\unitlength}{\unitlength * \real{\svgscale}}%
    \fi%
  \else%
    \setlength{\unitlength}{\svgwidth}%
  \fi%
  \global\let\svgwidth\undefined%
  \global\let\svgscale\undefined%
  \makeatother%
  \begin{picture}(1,0.4527854)%
    \lineheight{1}%
    \setlength\tabcolsep{0pt}%
    \put(0,0){\includegraphics[width=\unitlength,page=1]{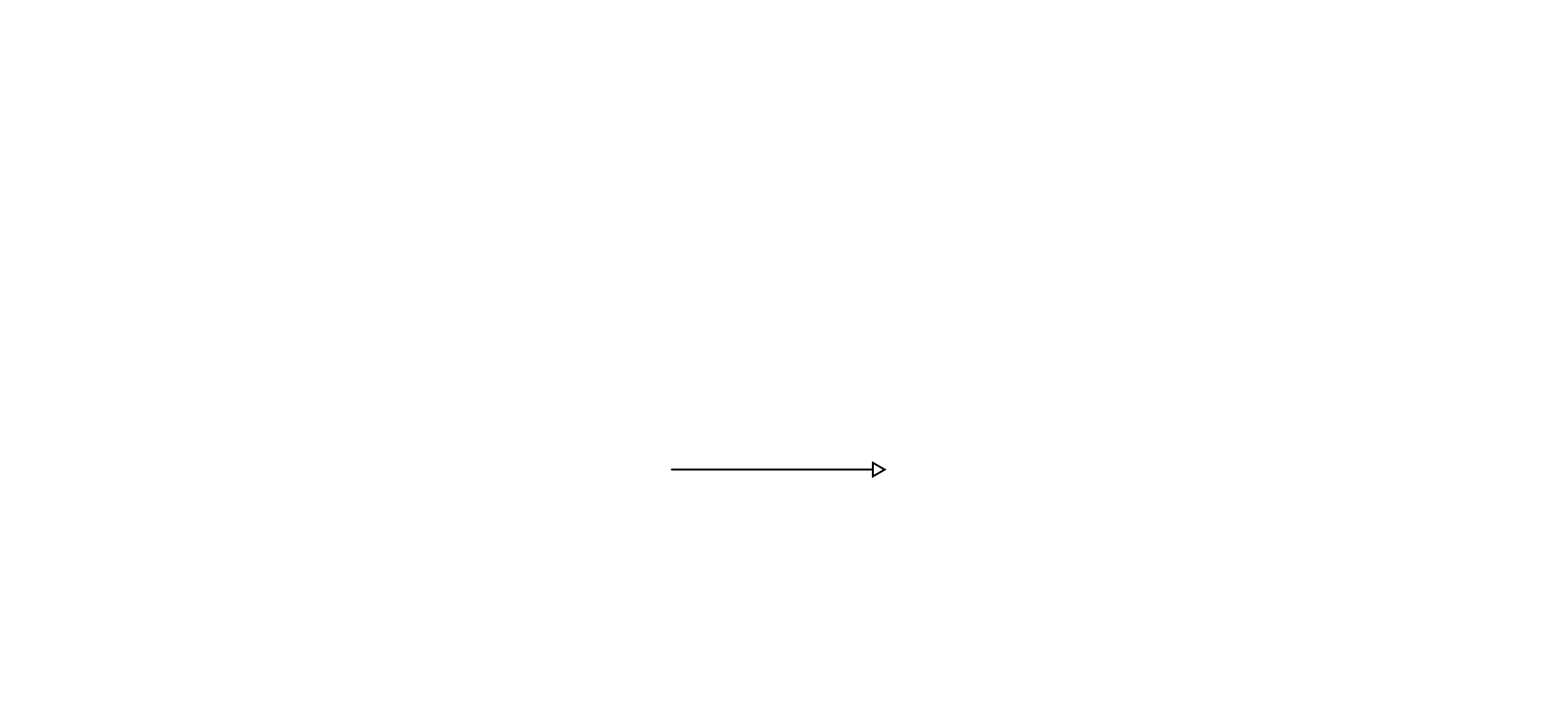}}%
    \put(0.4539096,0.16887685){\color[rgb]{0,0,0}\makebox(0,0)[lt]{\lineheight{1.25}\smash{\begin{tabular}[t]{l}$k^\prime\to \hat{k}^\prime$\end{tabular}}}}%
    \put(0,0){\includegraphics[width=\unitlength,page=2]{drawing-2.pdf}}%
    \put(0.1812614,0.15560648){\color[rgb]{0,0,0}\makebox(0,0)[lt]{\lineheight{1.25}\smash{\begin{tabular}[t]{l}$-k^\prime$\end{tabular}}}}%
    \put(0,0){\includegraphics[width=\unitlength,page=3]{drawing-2.pdf}}%
    \put(0.25469894,0.2883157){\color[rgb]{0,0,0}\makebox(0,0)[lt]{\lineheight{1.25}\smash{\begin{tabular}[t]{l}$k^\prime$\end{tabular}}}}%
    \put(0,0){\includegraphics[width=\unitlength,page=4]{drawing-2.pdf}}%
    \put(0.78073888,0.28353772){\color[rgb]{0,0,0}\makebox(0,0)[lt]{\lineheight{1.25}\smash{\begin{tabular}[t]{l}$\hat{k}^\prime$\end{tabular}}}}%
    \put(0.85429388,0.15234234){\color[rgb]{0,0,0}\makebox(0,0)[lt]{\lineheight{1.25}\smash{\begin{tabular}[t]{l}$-\hat{k}^\prime$\end{tabular}}}}%
  \end{picture}%
\endgroup%

\caption{An illustration of $\mathcal{G}^{k^\prime}_0 \subseteq \mathcal{G}^{\hat{k}^\prime}_+\cup\mathcal{G}^{\hat{k}^\prime}_-$. We see that reflection $k' = (k_1',k_2') \mapsto (-k_1',k_2') = \hat{k}'$ reverses upper and lower corners from right to left. Note we have chosen $k'$ sufficiently large for this and also to ensure that $\mathcal{K}^{\pm k'}$ and $\mathcal{K}^{\pm \hat{k}'}$ stay surrounded entirely by $\bar{\mathcal{G}}^{k'}$ and $\bar{\mathcal{G}}^{\hat{k}'}$ respectively.}
\label{fig:reflection}
\end{figure}
\end{proof}

\appendix

\section{Relevant algebraic geometry}\label{sec:Groebner}

\subsection{Polynomial ideals and Gr\"obner bases}

In this section we review some of the basic concepts from algebraic geometry that are used in the computer assisted proof of condition (i). 
We give a brief summary here for the readers' convenience as the area may be far removed from many of the readers' expertise.
The exposition here is adapted from Cox-Little-O'Shea \cite{Cox}, see therein for mathematical details and more explanations. 

Given a field $\K$ we denote by $\K[x_1,...,x_n]$ the ring of polynomials over $\K$ in $n$ variables with coefficients in $\K$. First, we recall the notion of a polynomial ideal.
\begin{definition}
A subset $I \subset \K[x_1,...,x_n]$ is an \emph{ideal} if 
\begin{itemize}
	\item[(i)] $0 \in I$. 
	\item[(ii)] If $f,g \in I$ then $f+g \in I$. 
	\item[(iii)] If $f \in I$ and $h \in \K[x_1,...,x_n]$, then $hf \in I$. 
\end{itemize}
Given $f_1,...,f_s$ polynomials in $\K[x_1,...,x_n]$, we define the ideal generated by these polynomials, denoted $\brak{f_1,...,f_s}$ as 
\begin{align*}
\brak{f_1,...,f_s} = \set{ \sum_{j=1}^s h_j f_j \,:\, h_j \in \K[x_1,...,x_n]}. 
\end{align*}
\end{definition}

We recall the notion of a variety, which is the set of points in $\K^n$ which solve a given system of polynomials. 
\begin{definition}
Let $f_1,\ldots,f_s \in \K[x_1,\ldots,x_n]$. Then we define 
\begin{align*}
\mathbf{V}(f_1,\ldots,f_s) = \set{(a_1,\ldots,a_n) \in \K^n : f_j(a_1,\ldots,a_n) = 0 \quad \forall j, 1 \leq j \leq s}.
\end{align*}
\end{definition}

Recall the notion of a variety associated to an ideal, which is the subset of $\K^n$ which simultaneously is a zero for all the polynomials in the ideal $I$. 
\begin{definition}
Given an ideal $I \subset \K[x_1,\ldots,x_n]$, we denote $\mathbf{V}(I)$ the set 
\begin{align*}
\mathbf{V}(I) = \set{(a_1,\ldots,a_n) \in \K^n : f(a_1,\ldots,a_n) = 0 \quad \forall f \in I}.
\end{align*}
This is an affine variety and in particular, $\mathbf{V}(\brak{f_1,\ldots,f_s}) = \mathbf{V}(f_1,...,f_s)$ (see e.g. [Proposition 9, page 81 \cite{Cox}]).  
\end{definition}

We are now ready to state a result on the non-solvability of a given system of polynomials which is equivalent to Hilbert's Nullstellensatz. 
\begin{theorem}[Weak Nullstellensatz (See Ch 1, Theorem 1 \cite{Cox})]
Let $I \subset \K[x_1,...,x_n]$ be an ideal over an algebraically closed field $\K$ satisfying $\mathbf{V}(I) = \emptyset$. Then $I = \K[x_1,...,x_n]$. 
\end{theorem}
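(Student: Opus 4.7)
The plan is to prove the contrapositive: given a proper ideal $I \subsetneq \K[x_1,\ldots,x_n]$, I will exhibit a point in $\mathbf{V}(I)$. By Zorn's lemma, $I$ is contained in some maximal ideal $\mathfrak{m}$, and since $\mathbf{V}(\mathfrak{m}) \subseteq \mathbf{V}(I)$, it suffices to find a zero of $\mathfrak{m}$. The problem thus reduces to classifying the maximal ideals of $\K[x_1,\ldots,x_n]$ when $\K$ is algebraically closed.

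The key claim I would target is that every maximal ideal of $\K[x_1,\ldots,x_n]$ has the form $\mathfrak{m}_a = (x_1 - a_1,\ldots,x_n - a_n)$ for some $a \in \K^n$. Once this is established, the point $a$ annihilates every generator of $\mathfrak{m}$, hence every element of $\mathfrak{m}$, so $a \in \mathbf{V}(\mathfrak{m}) \subseteq \mathbf{V}(I)$ and we are done. To prove the classification, I would consider the quotient $L = \K[x_1,\ldots,x_n]/\mathfrak{m}$, which is a field (since $\mathfrak{m}$ is maximal) and which is finitely generated as a $\K$-algebra by the images $\bar{x}_1,\ldots,\bar{x}_n$ of the coordinate variables.

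The main obstacle, and the step requiring real algebraic content, is Zariski's lemma: any field $L$ that is finitely generated as an algebra over a field $\K$ must in fact be a finite (and therefore algebraic) extension of $\K$. Applying Zariski's lemma to our $L$ and combining with the hypothesis that $\K$ is algebraically closed forces $L = \K$, so the quotient map sends each $x_i$ to some $a_i \in \K$; then $x_i - a_i \in \mathfrak{m}$ for every $i$, and since $\mathfrak{m}_a = (x_1 - a_1,\ldots, x_n - a_n)$ is itself maximal, we conclude $\mathfrak{m} = \mathfrak{m}_a$.

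Finally, to establish Zariski's lemma I would invoke Noether normalization: any finitely generated $\K$-algebra $A$ contains algebraically independent elements $y_1,\ldots,y_d$ over which $A$ is module-finite. If $A$ happens to be a field, then $\K[y_1,\ldots,y_d]$, being an integral domain over which the field $A$ is integral, must itself be a field, which is impossible unless $d = 0$; hence $A$ is a finite extension of $\K$. An alternative route, more in the spirit of the computational algebraic geometry used elsewhere in this paper, would be to argue via elimination ideals and resultants: if $I$ is proper, successively eliminate variables using the Elimination Theorem and use the fact that non-constant univariate polynomials over an algebraically closed field have roots to lift partial solutions one variable at a time, producing a point of $\mathbf{V}(I)$ explicitly.
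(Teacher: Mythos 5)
Your argument is correct: reducing to a maximal ideal $\mathfrak{m} \supseteq I$, classifying maximal ideals of $\K[x_1,\ldots,x_n]$ over an algebraically closed field as $\mathfrak{m}_a=(x_1-a_1,\ldots,x_n-a_n)$ via Zariski's lemma, and deducing that $a\in\mathbf{V}(\mathfrak{m})\subseteq\mathbf{V}(I)$ is a complete and standard proof, and your derivation of Zariski's lemma from Noether normalization (a field integral over a domain forces the domain to be a field, so the normalization has dimension zero) is sound. Note, however, that the paper does not prove this statement at all: it is quoted as a black-box result from Cox--Little--O'Shea, so there is no in-paper argument to compare against. The relevant comparison is with the cited textbook, which establishes the Weak Nullstellensatz by induction on the number of variables, using a change of coordinates together with elimination/extension (resultant) arguments --- essentially the alternative route you sketch in your last paragraph. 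The two approaches trade off as expected: your main route is shorter and conceptually cleaner but imports more commutative algebra (maximal ideals, Zariski's lemma, Noether normalization), while the elimination-theoretic proof is more constructive, produces a point of $\mathbf{V}(I)$ by lifting partial solutions one variable at a time, and sits closer to the Gr\"obner-basis machinery that this paper actually uses downstream (Theorems \ref{thm:SNull} and \ref{thm:Saturation}). Either proof fully supports the corollary the paper needs, namely that a system $f_1=\cdots=f_s=0$ has no solution over $\C$ if and only if $1\in\brak{f_1,\ldots,f_s}$.
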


The weak Nullstellensatz gives the following necessary and sufficient condition for the inconsistency of a given set of polynomial equations (over an algebraically closed field, e.g. $\mathbb C$). 
\begin{corollary}
Over an algebraically closed field, a given system of polynomial equations $f_1 = ... = f_s = 0$ does not have a solution if and only if $1 \in \brak{f_1,...,f_s}$. 
\end{corollary}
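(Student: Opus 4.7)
The statement is a direct bridge between the abstract Weak Nullstellensatz and the practical question of consistency of polynomial equations; the plan is to prove the two implications separately, observing that one direction is immediate and the other is precisely a reformulation of the Weak Nullstellensatz via the identification $\mathbf{V}(\langle f_1,\ldots,f_s\rangle) = \mathbf{V}(f_1,\ldots,f_s)$ noted above.

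For the easy direction, I would assume $1 \in \langle f_1,\ldots,f_s\rangle$, which by the very definition of the generated ideal means there exist $h_1,\ldots,h_s \in \K[x_1,\ldots,x_n]$ such that $1 = \sum_{j=1}^s h_j f_j$ as an identity of polynomials. If $(a_1,\ldots,a_n) \in \K^n$ were a common zero of the system, evaluating both sides at this point yields $1 = \sum_j h_j(a) \cdot 0 = 0$, a contradiction. Hence no solution can exist.

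For the converse, I would assume the system $f_1 = \cdots = f_s = 0$ has no solution in $\K^n$, i.e. $\mathbf{V}(f_1,\ldots,f_s) = \emptyset$. Since $\mathbf{V}(\langle f_1,\ldots,f_s\rangle) = \mathbf{V}(f_1,\ldots,f_s)$, the variety of the ideal $I := \langle f_1,\ldots,f_s\rangle$ is empty as well. The Weak Nullstellensatz now applies (this is the only step that uses algebraic closedness of $\K$) and forces $I = \K[x_1,\ldots,x_n]$. In particular the multiplicative identity $1 \in \K[x_1,\ldots,x_n]$ lies in $I$, completing this direction.

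There is no real obstacle to overcome, since the Weak Nullstellensatz has already been invoked as given; the only conceptual point worth emphasizing is the necessity of algebraic closure, without which the corollary fails (e.g.\ the ideal $\langle x^2 + 1\rangle \subset \R[x]$ defines the empty variety in $\R$ but does not contain $1$). From an implementation standpoint, this corollary is exactly what underlies the computer-assisted checks elsewhere in the paper: to verify inconsistency of a polynomial system, one computes a Gr\"obner basis of the associated ideal and checks that it reduces to $\{1\}$.
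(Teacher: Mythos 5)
Your proof is correct and is exactly the standard argument the paper leaves implicit (it states this corollary without proof as an immediate consequence of the Weak Nullstellensatz): the evaluation argument for the direction $1 \in \brak{f_1,\ldots,f_s} \Rightarrow$ no solution, and the Weak Nullstellensatz together with $\mathbf{V}(\brak{f_1,\ldots,f_s}) = \mathbf{V}(f_1,\ldots,f_s)$ for the converse. Your remark on the necessity of algebraic closedness is a nice touch but not required.
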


To computationally verify this, we need the concept of a Gr\"obner basis, which can in some way be considered an extension of a basis for the nullspace of a matrix in linear algebra. For this we need to fix a reasonable (total) ordering on monomials 
\[
x^\alpha = x_1^{\alpha_1} \ldots x_n^{\alpha_n},
\] 
where $\alpha \in \mathbb{Z}^n_{\geq 0}$ is a multi-index. A total ordering on monomials $\{x^\alpha\,:\, \alpha\in \Z^n_{\geq 0}\}$ is naturally equivalent to a total ordering on the set of multi indices $\mathbb{Z}^n_{\geq 0}$; see e.g. [Definition 1, page 55 \cite{Cox}]. There are many reasonable choices for orderings on $\Z^2_{\geq 0}$. A common choice of ordering is the lexicographic ordering where $\alpha > \beta$ if the leftmost non-zero entry in $\alpha-\beta$ is positive. However, many choices are possible and often preferable from a computational standpoint [Chapter 2, Section 2 of \cite{Cox}]. 

A monomial ordering allows one to define the {\em leading term} $\mathrm{LT}(f)$ of a polynomial $f$, defined to be largest term $cx^{\alpha}$ in that polynomial. In a similar manner, for an ideal $I$, we can define $\mathrm{LT}(I)$ the set of leading terms of all non-zero elements of $I$. With a monomial ordering on hand, we can now define the Gr\"obner basis of an ideal.
\begin{definition}[Gr\"obner Basis]
Fix a monomial order on $\K[x_1,...x_n]$. A finite subset $G = \set{g_1,...,g_k}$ of a non-zero ideal $I \subset \K[x_1,...x_n]$ is called a \emph{Gr\"obner basis} if 
\begin{align*}
\brak{\mathrm{LT}(I)} = \brak{\mathrm{LT}(g_1),...,\mathrm{LT}(g_k)}, 
\end{align*}
\end{definition}

One can show that every ideal $I \subset \K[x_1,\ldots x_n]$ has a Gr\"obner basis and moreover that any Gr\"obner basis $G = \set{g_1,\ldots,g_k}$ satisfies $\brak{g_1,\ldots,g_k} = I$ (see [Corollary 6; page 78 \cite{Cox}] ). 
We need to put an additional constraint in order to single out a unique, minimal Gr\"obner basis. 
\begin{definition}
Given a monomial ordering, a \emph{reduced Gr\"obner basis} for a polynomial ideal $I$ is a Gr\"obner basis $G$ of an ideal $I$ such that 
\begin{itemize}
	\item[(i)] The leading coefficient of $p=1$ for all $p \in G$.
	\item[(ii)] $\forall p \in G$ no monomial of $p$ lies in $\brak{\mathrm{G \setminus \set{p}}}$. 
\end{itemize}
\end{definition}

\begin{theorem}[Theorem 5, page 93 \cite{Cox}]
For a given monomial ordering and a given non-zero ideal $I \subset \K[x_1,...,x_n]$ there exists a unique reduced Gr\"obner basis. 
\end{theorem}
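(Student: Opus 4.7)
The plan is to prove existence constructively, starting from any Gröbner basis and ``grooming'' it step by step until it satisfies the two defining conditions of reducedness, and then to prove uniqueness by exploiting the rigidity of those two conditions. For existence, I would start with any finite Gröbner basis $G = \set{g_1,\ldots,g_k}$ of $I$ (such a $G$ exists by applying Dickson's lemma to the monomial ideal $\brak{\mathrm{LT}(I)}$, giving finitely many generators of its leading-term ideal, whose preimages in $I$ form a Gröbner basis). I would first rescale each $g_i$ so that its leading coefficient equals $1$, and then iteratively remove any $g_i$ whose leading monomial is divisible by $\mathrm{LM}(g_j)$ for some $j \neq i$. This produces a so-called minimal Gröbner basis, using the general fact that a finite subset of $I$ is a Gröbner basis if and only if its leading terms generate $\brak{\mathrm{LT}(I)}$, so dropping redundant leading-term generators preserves the Gröbner property. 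Finally, for each $g_i$ in the minimal basis I would replace $g_i$ by the remainder of $g_i$ upon division by $G\setminus\set{g_i}$. Since $\mathrm{LM}(g_i)$ is not divisible by any $\mathrm{LM}(g_j)$ for $j\neq i$, the division algorithm leaves the leading monomial of $g_i$ untouched, so the resulting set is still a Gröbner basis; and by construction no monomial appearing in the new $g_i$ lies in $\brak{\mathrm{LM}(g_j) : j\neq i}$, yielding reducedness.

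For uniqueness, suppose $G$ and $G'$ are two reduced Gröbner bases of $I$ for the same monomial order. I would first establish that $\set{\mathrm{LM}(g) : g\in G} = \set{\mathrm{LM}(g') : g'\in G'}$, because both sets must coincide with the unique minimal monomial generating set of the monomial ideal $\brak{\mathrm{LT}(I)}$ (uniqueness of minimal monomial generators is a standard consequence of Dickson's lemma). Then for each common leading monomial $m$, let $g\in G$ and $g'\in G'$ be the (unique) basis elements with $\mathrm{LM}(g)=\mathrm{LM}(g')=m$; both are monic, so the difference $h := g-g'$ lies in $I$ and the monomial $m$ cancels. If $h\neq 0$, then since $G$ is a Gröbner basis of $I$, the leading monomial of $h$ must be divisible by some $\mathrm{LM}(g_j)$; but every monomial appearing in $h$ comes from $g$ or $g'$, and by the reduced condition for $G$ and $G'$ no such monomial (other than the already-cancelled $m$) is divisible by any leading monomial of $G$ or $G'$, giving a contradiction. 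Hence $g=g'$ and $G=G'$.

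The main subtlety — not really an ``obstacle'' but the one place care is required — is verifying that the final polynomial-division step in the existence argument preserves the Gröbner basis property. This rests on the two observations that (i) reducing $g_i$ modulo the others does not change $\mathrm{LM}(g_i)$ (because of minimality) and therefore does not change $\brak{\mathrm{LT}(g_1),\ldots,\mathrm{LT}(g_k)}$, and (ii) such a reduction keeps $g_i$ inside $I$ (we only add $\K[x_1,\ldots,x_n]$-multiples of other elements of $I$). A second technical point underlying both halves of the proof is the well-ordering property of monomial orderings, which is what guarantees termination of the division algorithm and the existence of minimal monomial generating sets via Dickson's lemma.
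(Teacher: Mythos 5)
Your proof is correct and is essentially the standard argument from Cox--Little--O'Shea, which is precisely the source this statement is quoted from: the paper gives no proof of its own, simply citing [Cox, Theorem 5, p.~93]. Both your existence step (pass to a monic minimal Gr\"obner basis, then auto-reduce each element, noting that leading monomials --- and hence the Gr\"obner property --- are preserved) and your uniqueness step (the leading monomials of any reduced basis form the unique minimal generating set of the leading-term ideal, then cancellation of the common leading monomial plus reducedness forces the difference to vanish) follow that textbook treatment.
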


The following theorem summarizes now the sufficient condition we will use to prove inconsistency.
\begin{theorem}[Sufficient condition for inconsistency (See page 179, \cite{Cox})]\label{thm:suff-weak-null}
Let $\{f_1,\ldots f_s\}$ be a collection of polynomials in $\K[x_1,\ldots,x_n]$. If for a given monomial ordering, $\set{1}$ is the reduced Gr\"obner basis of the ideal $\langle f_1,...,f_s\rangle$, then $\mathbf{V}(f_1,\ldots,f_s)= \emptyset$ (i.e. $\{f_1,\ldots f_s\}$ are algebraically inconsistent) 
\end{theorem}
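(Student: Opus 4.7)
The plan is to reduce the statement to the fact that a proper ideal cannot contain the unit. The key observation is that a Gröbner basis for an ideal is, in particular, a generating set for that ideal. This is the content of [Corollary 6, page 78 \cite{Cox}] cited just above in the excerpt, which says that any Gröbner basis $G = \{g_1,\ldots,g_k\}$ of an ideal $I$ satisfies $\langle g_1,\ldots,g_k\rangle = I$. So if the reduced Gröbner basis of $\langle f_1,\ldots,f_s\rangle$ is $\{1\}$, then $1$ lies in this ideal.

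Once this is in hand, the remainder is a one-line consequence of the definition of an ideal. I would first argue that $1 \in \langle f_1,\ldots,f_s\rangle$ implies $\langle f_1,\ldots,f_s\rangle = \K[x_1,\ldots,x_n]$, since for every $h \in \K[x_1,\ldots,x_n]$ property (iii) of an ideal gives $h = h \cdot 1 \in \langle f_1,\ldots,f_s\rangle$. More concretely, the definition of the generated ideal produces an explicit identity
\[
1 = \sum_{i=1}^{s} h_i f_i
\]
for some $h_i \in \K[x_1,\ldots,x_n]$.

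The final step is to suppose for contradiction that $\mathbf{V}(f_1,\ldots,f_s) \neq \emptyset$, choose $(a_1,\ldots,a_n) \in \mathbf{V}(f_1,\ldots,f_s)$, and evaluate the identity above at this point. Since $f_i(a_1,\ldots,a_n) = 0$ for each $i$ by the definition of the variety, the right hand side becomes $0$, while the left hand side is $1$, producing the contradiction $1 = 0$ in $\K$. Hence $\mathbf{V}(f_1,\ldots,f_s) = \emptyset$, as claimed.

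There is no real obstacle here: the content of the statement is packaged entirely in the fact that a Gröbner basis generates its ideal, which is available from the cited corollary. Note also that, in contrast to the weak Nullstellensatz itself, no algebraic closure hypothesis on $\K$ is needed in this direction; the implication goes from the algebraic condition $1 \in I$ to non-existence of solutions in $\K^n$, which is essentially trivial. (The converse direction, which is the actual Nullstellensatz, would require $\K$ algebraically closed, but that is not what is being asserted.)
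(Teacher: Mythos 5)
Your proof is correct. The paper does not actually supply an argument for this theorem --- it is quoted directly from Cox--Little--O'Shea --- and the route suggested by the surrounding exposition is: the reduced Gr\"obner basis $\{1\}$ gives $1 \in \brak{f_1,\ldots,f_s}$ (since a Gr\"obner basis generates the ideal), and then one invokes the corollary of the weak Nullstellensatz stated just above to conclude $\mathbf{V}(f_1,\ldots,f_s) = \emptyset$. You follow the same first step but replace the appeal to the Nullstellensatz corollary by the elementary evaluation argument: write $1 = \sum_i h_i f_i$ and evaluate at a hypothetical common zero to get $1 = 0$. This is a genuine (if small) improvement in precision: the paper's corollary is stated as an ``if and only if'' over an algebraically closed field, whereas the implication actually needed here holds over any field, exactly as you observe; only the converse (the true content of the Nullstellensatz, used elsewhere in the paper via Theorems \ref{thm:SNull} and \ref{thm:Saturation}) requires algebraic closure. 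So your write-up is both complete and marginally more general than what the paper's citations literally provide.
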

There exists many algorithms for computing the reduced Gr\"obner basis of a given ideal, we will use the Maple \cite{maple} implementation of the F4 algorithm \cite{F1999}. Computing a reduced Gr\"obner basis can be very computationally intensive hence it was important that we can take many steps to reduce the complexity of the system.

\subsection{Saturation of an ideal} \label{sec:Sat}
We are however, not quite finished. In our situation we do not quite have a simple system of polynomials but in fact we have a system of polynomials where solutions are constrained to stay away from a certain affine variety $\mathbf{V}(g) \in\K^n$ that is characterized as the zero set of a certain polynomial $g \in \K[x_1,\ldots,x_n]$. The constraint that solutions stay away from $\mathbf{V}(g)$ can then be characterized by the non vanishing of $g$. Our goal then is to understand when, for a given ideal $I\subseteq \K[x_1,\ldots,x_n]$, we have
\[
	\mathbf{V}(I) = \mathbf{V}(g),
\]
which implies that, away from the constraint set $\{g \neq 0\}$, the ideal $I$ has no common zeros. Algebraically, the idea here is to, in essence, mod out $g$ from the ideal $I$. This is commonly done via what is known as {\em saturation} of the ideal $I$ by $g$.
\begin{definition}[Saturation]
The saturation of an ideal $I$ of $\K[x_1,\ldots,x_n]$ by a polynomial $g$ is defined to be the set
\[
	I:g^\infty :=\{f\in \K[x_1,\ldots,x_n]\,:\, \text{there exists } m \geq 0 \text{ such that} fg^m \in I\}.
\]
\end{definition}

Our most important application of the saturation $I:g^\infty$ is the following version of the Strong Nullstellensatz.
\begin{theorem}[Strong Nullstellensatz (c.f. Ch 4, Theorem 10 \cite{Cox})] \label{thm:SNull}
Let $\K$ be an algebraically closed field and let $I$ be an ideal in $\K[x_1,\ldots,x_n]$, then for any polynomial $g\in \K[x_1,\ldots,x_n]$ we have
\[
	\mathbf{V}(I) = \mathbf{V}(g)
\]
if and only if the saturation $I:g^\infty = \K[x_1,\ldots,x_n]$. In other words, by the weak Nullstellensatz, 
\[
\mathbf{V}(I) = \mathbf{V}(g)\quad \text{if and only if}\quad \mathbf{V}(I:g^\infty) = \emptyset.
\]
\end{theorem}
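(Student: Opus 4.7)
The plan is to route both directions of the equivalence through Hilbert's Strong Nullstellensatz, which translates membership in the radical $\sqrt{I}$ into containment of vanishing loci. The first step is to rewrite the saturation condition algebraically: since $I : g^\infty = \{f : fg^m \in I \text{ for some } m \geq 0\}$ is itself an ideal, the equality $I : g^\infty = \K[x_1,\ldots,x_n]$ is equivalent to $1 \in I : g^\infty$, which unfolds to $g^m \in I$ for some $m \geq 0$, i.e., $g \in \sqrt{I}$. Then I would invoke the Strong Nullstellensatz, which over an algebraically closed $\K$ gives $\sqrt{I} = I(\mathbf{V}(I))$, where $I(S)$ denotes the vanishing ideal of $S$. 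This identifies $g \in \sqrt{I}$ with the geometric containment $\mathbf{V}(I) \subseteq \mathbf{V}(g)$, so altogether
\[
I : g^\infty = \K[x_1,\ldots,x_n] \quad \Longleftrightarrow \quad g \in \sqrt{I} \quad \Longleftrightarrow \quad \mathbf{V}(I) \subseteq \mathbf{V}(g).
\]

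The forward direction of the theorem is then immediate: $\mathbf{V}(I) = \mathbf{V}(g)$ trivially yields $\mathbf{V}(I) \subseteq \mathbf{V}(g)$, hence $I : g^\infty = \K[x_1,\ldots,x_n]$. The reverse direction is the main obstacle. The chain above only delivers $\mathbf{V}(I) \subseteq \mathbf{V}(g)$; the opposite inclusion $\mathbf{V}(g) \subseteq \mathbf{V}(I)$, which by Nullstellensatz is equivalent to $I \subseteq \sqrt{\langle g \rangle}$ (every element of $I$ has a power divisible by $g$), is an independent algebraic condition that the saturation hypothesis does not by itself produce. My plan is to supply this missing inclusion from the structure of $g$ in the intended application: in Proposition \ref{prop:algebraic-geo-proof-1} the factor $g = r^2 |i|_r^2 |j|_r^2 |i-j|_r^2$ carves out exactly the degeneracy locus where the rational function $d_{i,j}^{k,k'}$ (whose numerator generates $I$) becomes ill-defined, and a direct case analysis on $r = 0$, $i = 0$, $j = 0$, $i = j$ would verify that each generator $f_1, \ldots, f_s$ vanishes identically on the corresponding piece of $\mathbf{V}(g)$, since every monomial in the numerator $P$ inherits at least one of these degeneracy factors once a common denominator is cleared. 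Once the inclusion $\mathbf{V}(g) \subseteq \mathbf{V}(I)$ is in hand, it combines with the containment produced by the saturation to give $\mathbf{V}(I) = \mathbf{V}(g)$.

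Finally, the reformulation $\mathbf{V}(I:g^\infty) = \emptyset$ stated in the ``in other words'' clause follows from the Weak Nullstellensatz applied to the ideal $J := I:g^\infty$: one has $\mathbf{V}(J) = \emptyset$ if and only if $J = \K[x_1,\ldots,x_n]$, turning the algebraic saturation statement into the empty-variety statement used in the Gröbner basis certification of Proposition \ref{prop:algebraic-geo-proof-1}.
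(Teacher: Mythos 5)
Your chain of equivalences
\[
I:g^\infty=\K[x_1,\ldots,x_n]\ \Longleftrightarrow\ 1\in I:g^\infty\ \Longleftrightarrow\ g^m\in I \text{ for some } m\ \Longleftrightarrow\ g\in\sqrt{I}\ \Longleftrightarrow\ \mathbf{V}(I)\subseteq\mathbf{V}(g)
\]
is exactly the standard argument (the last step via $\sqrt{I}=I(\mathbf{V}(I))$ over an algebraically closed field), and you are right that it delivers only the \emph{inclusion} $\mathbf{V}(I)\subseteq\mathbf{V}(g)$, not the equality written in the statement. The paper offers no proof of this theorem --- it is quoted from Cox--Little--O'Shea, where the underlying result is $\mathbf{V}(I:g^\infty)=\overline{\mathbf{V}(I)\setminus\mathbf{V}(g)}$, whose emptiness is equivalent to $\mathbf{V}(I)\subseteq\mathbf{V}(g)$. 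The ``$=$'' in the paper's displayed equivalence is therefore an overstatement, but a harmless one: everywhere the theorem is invoked (Propositions \ref{prop:algebraic-geo-proof-1} and \ref{prop:main-distinctness-prop}), the only conclusion drawn is that the generators have no common zero outside the constraint locus $\{g=0\}$, which is precisely the inclusion your chain proves.

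The genuine gap is in your proposed repair of the converse. You suggest closing the gap by verifying $\mathbf{V}(g)\subseteq\mathbf{V}(I)$ directly, on the grounds that every term of the cleared numerator ``inherits'' one of the degeneracy factors. That is not how clearing denominators works: the numerator of a rational function does not in general vanish on the zero locus of its denominator (already $1/x-1/y=(y-x)/(xy)$ has numerator $y-x$, which does not vanish at $x=0$). In the paper's setting the analogous claim fails at the very first step: the cleared numerator of $c_{j,k}$ is $\langle j^\perp,k\rangle_r\left(|j|_r^2-|k|_r^2\right)$, which does not vanish on $\{|j|_r^2=0\}$. Moreover, over $\C$ the locus $\{g=0\}$ contains entire quadrics such as $\{i_1^2+r^2i_2^2=0\}$, not just the points $i=0$, $j=0$, $i=j$, $r=0$, so the proposed case analysis would not even exhaust $\mathbf{V}(g)$. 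The correct resolution is not to manufacture the reverse inclusion but to state the theorem in the form that is both true and actually used: $I:g^\infty=\K[x_1,\ldots,x_n]$ (equivalently, $\mathbf{V}(I:g^\infty)=\emptyset$) if and only if $\mathbf{V}(I)\subseteq\mathbf{V}(g)$. Your final paragraph, reducing $\mathbf{V}(I:g^\infty)=\emptyset$ to $I:g^\infty=\K[x_1,\ldots,x_n]$ via the weak Nullstellensatz, is correct.
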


In order to compute the saturation $I:g^\infty$, we make use of following trick that introduces a new variable $z$ that represents the inverse of $g$. This allows us to give a convenient computational condition for $\mathbf{V}(I) = \mathbf{V}(g)$.

\begin{theorem}[c.f. Chapter 4 Theorem 14 \cite{Cox}] \label{thm:Saturation}
Let $\{f_1,\ldots,f_s\}$ be a basis for an ideal $I\subseteq \K[x_1,\ldots, x_n]$, for an algebraically closed field $\K$. Let $z$ be a new variable and define the augmented ideal
\[
	\tilde{I} = \langle f_1,\ldots,f_s,zg-1\rangle \subseteq \K[x_1,\ldots,x_n,z],
\]
then 
\[
	\mathbf{V}(I:g^\infty) = \emptyset \quad \text{if and only if}\quad \mathbf{V}(\tilde{I}) = \emptyset.
\]
In particular, by the strong Nullstellensatz and Theorem \ref{thm:suff-weak-null}, if the reduced Gr\"{o}bner basis of $\tilde{I}$ is $\{1\}$, then
\[
	 \mathbf{V}(I) = \mathbf{V}(g).
\]
\end{theorem}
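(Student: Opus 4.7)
My plan is to prove the equivalence $\mathbf{V}(I:g^\infty) = \emptyset \Leftrightarrow \mathbf{V}(\tilde{I}) = \emptyset$ by inserting the geometric statement ``$\mathbf{V}(I) \subseteq \mathbf{V}(g)$'' as an intermediate characterization on both sides, and then read off the ``in particular'' conclusion from the already-stated Theorems \ref{thm:SNull} and \ref{thm:suff-weak-null}.

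First I would compute $\mathbf{V}(\tilde{I}) \subseteq \K^{n+1}$ directly. A tuple $(a,b) = (a_1,\ldots,a_n,b) \in \K^{n+1}$ lies in $\mathbf{V}(\tilde{I})$ exactly when $f_i(a) = 0$ for all $i = 1,\ldots,s$ and $g(a)\,b - 1 = 0$, which forces $g(a) \neq 0$ and $b = 1/g(a)$. Hence $\mathbf{V}(\tilde{I})$ is naturally identified with the graph of $a \mapsto 1/g(a)$ over $\mathbf{V}(I) \setminus \mathbf{V}(g)$, and in particular $\mathbf{V}(\tilde{I}) = \emptyset$ if and only if $\mathbf{V}(I) \subseteq \mathbf{V}(g)$. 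This is the only real geometric content of the theorem; it is essentially the observation that introducing the relation $gz = 1$ algebraically excises the locus $\{g = 0\}$ from the ambient variety.

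Next I would establish the analogous characterization of $I:g^\infty$. By definition of the saturation, $1 \in I:g^\infty$ if and only if $g^m \in I$ for some $m \geq 0$; and $g^m \in I$ implies $g(a)^m = 0$ whenever $a \in \mathbf{V}(I)$, forcing $\mathbf{V}(I) \subseteq \mathbf{V}(g)$. Conversely, if $\mathbf{V}(I) \subseteq \mathbf{V}(g)$, then $g$ vanishes on every common zero of $f_1,\ldots,f_s$, so Hilbert's Nullstellensatz (valid because $\K$ is algebraically closed) yields $g \in \sqrt{I}$, i.e.\ $g^m \in I$ for some $m$. Combined with the elementary fact that $1 \in I:g^\infty$ is equivalent to $I:g^\infty = \K[x_1,\ldots,x_n]$ and hence to $\mathbf{V}(I:g^\infty) = \emptyset$, this yields $\mathbf{V}(I:g^\infty) = \emptyset \Leftrightarrow \mathbf{V}(I) \subseteq \mathbf{V}(g)$. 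Chaining with the first step gives the desired equivalence.

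Finally, for the ``in particular'' assertion, Theorem \ref{thm:suff-weak-null} implies that if the reduced Gröbner basis of $\tilde{I}$ equals $\{1\}$, then $\mathbf{V}(\tilde{I}) = \emptyset$; the equivalence just proven then forces $\mathbf{V}(I:g^\infty) = \emptyset$, and Theorem \ref{thm:SNull} in turn gives $\mathbf{V}(I) = \mathbf{V}(g)$. No serious obstacle arises in this argument; the only conceptual point is the interpretation of the auxiliary variable $z$ as a formal inverse for $g$, and everything else is a routine packaging of the weak and strong Nullstellensatz with the definition of saturation.
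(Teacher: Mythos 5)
Your proof is correct, but it is worth noting that the paper does not actually prove this statement at all: it is quoted from Cox--Little--O'Shea \cite{Cox} (Chapter 4, Theorem 14), where the underlying fact is the stronger algebraic identity that the saturation $I:g^\infty$ coincides with the elimination ideal $\tilde{I}\cap\K[x_1,\ldots,x_n]$, from which the emptiness equivalence follows. Your route is different and more elementary for the specific equivalence needed: you identify $\mathbf{V}(\tilde{I})$ with the graph of $1/g$ over $\mathbf{V}(I)\setminus\mathbf{V}(g)$ (the Rabinowitsch observation), and separately show that $\mathbf{V}(I:g^\infty)=\emptyset$ is equivalent to the containment $\mathbf{V}(I)\subseteq\mathbf{V}(g)$ via the definition of saturation together with the weak and strong Nullstellensatz; chaining the two characterizations gives the stated equivalence, and the ``in particular'' clause then follows from Theorems \ref{thm:suff-weak-null} and \ref{thm:SNull} exactly as you say. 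What Cox's elimination-ideal identity buys is a purely algebraic statement (and a way to compute $I:g^\infty$ itself), whereas your argument buys brevity at the cost of invoking the Nullstellensatz twice. One small observation: your intermediate characterizations by themselves yield only the inclusion $\mathbf{V}(I)\subseteq\mathbf{V}(g)$; the equality $\mathbf{V}(I)=\mathbf{V}(g)$ in the final display is inherited from Theorem \ref{thm:SNull} as the paper states it, and it is the inclusion that is actually used downstream, so this is a feature of the paper's phrasing rather than a gap in your argument.
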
\

\section{A key lemma regarding polynomials on the lattice} \label{sec:lattice}
This lemma is an important technical trick that is used repeatedly through out the paper to 
For example, we can deduce that if a polynomial vanishes at sufficiently many integer lattice points in some its variables, then the polynomials making up the coefficients of those variables must all vanish. 

\begin{lemma} \label{lem:Reducek}
	Let $P(x_1,...,x_n,y_1,...,y_m)$  be a polynomial $m+n$ complex variables and suppose that it is degree $J$ viewed as a polynomial in the last $m$ variables. 
	Suppose that $K \subset \mathbb Z^m$ contains 
	$\Z^m_{0,q}$ with $2q + 2 > J$ or $\Z^m_{0} \cap \set{\Z^{m}_{q} + i}$ with $2q + 1 > J$ and some $i \in \Z^m_0$.  
	Define the coefficients of $P$ viewed as a polynomial in the last $m$ variables 
	\begin{align*}
		\set{f_1,...,f_s} = \textup{coeffs}(P,\set{y_1,...,y_m}).  
	\end{align*}
	Furthermore for each $y = (y_1,\ldots,y_m)$ define the polynomial in $\C[x_1,\ldots,x_n]$ by
	\begin{align*}
	P_y(x_1,...,x_n) = P(x_1,...,x_n,y_1,...,y_m). 
	\end{align*}
	Then the following polynomial ideals are equivalent 
	\begin{align*}
	\brak{f_1,...,f_s} = \brak{P_y : y \in K}. 
	\end{align*}
\end{lemma}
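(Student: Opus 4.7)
\medskip
\noindent\textbf{Proof proposal.} The plan is to establish the two inclusions of ideals separately. The inclusion $\langle P_y : y \in K\rangle \subseteq \langle f_1,\dots,f_s\rangle$ is essentially immediate: writing $P(x,y) = \sum_{\alpha} f_\alpha(x) y^\alpha$ with $\alpha$ ranging over the multi-indices corresponding to the chosen coefficients (we simply relabel $\{f_1,\dots,f_s\} = \{f_\alpha\}_\alpha$), each evaluation $P_y(x) = \sum_\alpha f_\alpha(x) y^\alpha$ is a $\mathbb{C}$-linear combination of the $f_\alpha$, hence lies in $\langle f_1,\dots,f_s\rangle$.

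For the reverse inclusion, the key observation is that since each $P_y$ is a fixed $\mathbb{C}$-linear (not $\mathbb{C}[x]$-linear) combination of the $f_\alpha$, the statement $f_\alpha \in \langle P_y : y \in K\rangle$ is equivalent to the purely linear-algebraic question of whether there exists a finite subset $K_0 \subseteq K$ and scalars $\{c_{y,\alpha}\}_{y \in K_0}$ with $f_\alpha = \sum_{y \in K_0} c_{y,\alpha} P_y$ for each $\alpha$. This holds iff the evaluation matrix $E = [y^\alpha]_{y \in K_0,\ \alpha}$ has a left inverse, i.e.\ the monomials $\{y^\alpha\}_\alpha$ restricted to $K_0$ are linearly independent as functions on $K_0$.

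My plan is to construct such a $K_0$ as a tensor-product grid on which the evaluation matrix factors as a Kronecker product of one-dimensional Vandermonde matrices. Specifically, I would choose distinct integers $a_0,\dots,a_J$ with $|a_l| \leq q$ for each $l$ and set $K_0 = \{a_0,\dots,a_J\}^m$. Since every monomial $y^\alpha$ of $P$ has each $\alpha_i \leq J$, the evaluation matrix on this grid is the $m$-fold Kronecker product of the $(J{+}1)\times(J{+}1)$ Vandermonde matrix $V_{lj} = a_l^{j}$, whose determinant is $\prod_{l<l'}(a_{l'}-a_l) \neq 0$. Thus $E$ has full column rank, yielding the required $c_{y,\alpha}$ over $\mathbb{Q}$ (in particular over $\mathbb{C}$).

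It remains to check that such $K_0$ can be selected inside $K$ under either hypothesis. If $K \supseteq \Z^m_0 \cap (\Z^m_q + i)$, the coordinate-wise projections contain $2q+1$ distinct integers in each direction; the hypothesis $2q+1 > J$ gives at least $J+1$ such values, and one can easily arrange the tensor grid to avoid the (at most single) lattice point coinciding with the origin. If instead $K \supseteq \Z^m_{0,q}$, one selects the $a_l$'s from $\{-q,\dots,q\}\setminus\{0\}$, which has $2q$ elements; choosing all $a_l \neq 0$ automatically ensures $K_0 \subseteq (\Z^m)^\ast \subseteq \Z^m_{0,q}\cup\dots$ avoids the origin entirely, and the hypothesis $2q+2 > J$ provides (with a touch of slack) the $J+1$ distinct nonzero nodes per direction. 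The main (and only genuine) obstacle is this last bookkeeping step — verifying that after excising the origin, the tensor-product grid of size $(J+1)^m$ fits inside the allotted lattice set — but the stated inequalities on $q$ are exactly tuned so that a suitable $K_0$ always exists.
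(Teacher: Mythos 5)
Your overall route is the same as the paper's: the inclusion $\brak{P_y : y \in K} \subseteq \brak{f_1,\dots,f_s}$ is immediate, and the reverse inclusion is obtained by inverting a Vandermonde system so that each coefficient is a fixed $\Q$-linear combination of finitely many evaluations $P_y$; your Kronecker-product grid is simply an explicit form of the ``$m$-dimensional Vandermonde matrix'' (equivalently, the paper's coordinate-by-coordinate elimination), so the core of the argument matches.

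The gap is exactly the bookkeeping step you flag at the end, and your claim that ``the stated inequalities on $q$ are exactly tuned'' is not correct. In the case $K \supseteq \Z^m_{0,q}$, the hypothesis $2q+2>J$ only gives $2q \geq J-1$ nonzero integers in $[-q,q]$, not the $J+1$ nodes per direction that your tensor grid requires (you would need $2q \geq J+1$). In the shifted case, $2q+1>J$ gives exactly $J+1$ values per coordinate, but if the origin lies inside the shifted box then \emph{every} coordinate range contains $0$, so excising the origin from a pure product grid costs a node in some direction and leaves only $J$ there. These borderline regimes are genuinely problematic: for $m=1$, $q=1$, $J=3$, $P(x,y)=x\,y(y^2-1)$ one has $P_{\pm1}=0$, so $\brak{P_y : y \in \Z^1_{0,1}}=\{0\}$ while $\brak{f_1,\dots,f_s}=\brak{x}$ --- no argument can close the case your arithmetic claims to cover --- and for $m\geq 2$ the boundary case $2q=J$ is still true but needs an argument beyond the pure grid (e.g.\ first use the full sub-boxes at nonzero values of the punctured coordinate to kill all monomials containing that coordinate, then use a point of the remaining slice away from the origin). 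To be fair, the paper's own proof is equally brisk here --- it simply asserts that $K$ contains at least $J+1$ usable points in each direction --- and in every application in the paper there is slack ($q=6$, $J=10$ for Proposition \ref{prop:algebraic-geo-proof-1}; $a=10$, $J=19$ for Proposition \ref{prop:main-distinctness-prop}), so nothing downstream is affected. But as a verification of the lemma as stated, your final counting step does not go through: you must either read the hypothesis as guaranteeing $J+1$ admissible nodes per direction after the origin is removed, or supply the extra puncture argument for $m \geq 2$.
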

\begin{proof}
	First, consider the case $m=1$.  
	The ideal generated by $P_y$ for $y \in K$ is given by the following
	\begin{align*}
	f(x) = \sum_{y \in K} h_y(x) P_y(x) = \sum_{y \in K} h_y(x) \sum_{0 \leq j \leq J} f_j(x) y^j, 
	\end{align*}
	and hence $\brak{P_y : y \in K} \subset \brak{f_1,...,f_s}$. 
	To see the other direction, consider any polynomial in $f \in \brak{f_1,...f_s}$
	\begin{align*}
	f(x) = \sum_{0 \leq j \leq J} h_j(x) f_j(x). 
	\end{align*}
Note that since there are at least $J+1$ distinct points in $K$ we can write each coefficient $f_j(x) = \sum_{y \in K} c_y P_y(x)$ for some rational coefficients $c_y$ (by inverting the Vandermonde matrix encoding the equations $P_y(x) = \sum_{0 \leq j \leq J} y^j f_j(x)$ $\forall y \in K$). Therefore, we can write 
	\begin{align*}
	f(x) = \sum_{0 \leq j \leq J} h_j(x) f_j(x) = \sum_{0 \leq j \leq J} \sum_{y \in K} h_j(x) c_y P_y(x), 	
	\end{align*}
	and therefore $\brak{f_1,...,f_s} \subseteq \brak{P_y : y \in K}$. 

Consider next the case $m > 1$. 
As in the $m=1$ case, it is straightforward to check that $\brak{P_y : y \in K} \subseteq \brak{f_1,...,f_s}$. 
The other inclusion can be proved analogously as well using the $m$-dimensional Vandermonde matrix. One may alternatively see it using an iterative argument as in the statement regarding affine varieties. 
Indeed, for any $(x_1,...,x_n,y_1,...,y_m)$ consider the set of points $y_{m+1}$ such that $y \in K$. 
Then we have the relationship 
\begin{align*}
P(x_1,...,x_n,y_1,...,y_{m+1}) = g_{J+1}(x_1,...,x_n,y_1,...,y_m) y^J_{m+1} + ... + g_1(x_1,...,x_n,y_1,...,y_m),
\end{align*}
for suitable coefficient polynomials $f_1,...f_{J+1}$. 
These coefficient polynomials can be solved for in terms of $P(x_1,...,x_n,y_1,...,y_{m+1})$ by a usual Vandermonde matrix for the points $y \in K$ 
By the assumption on the set $K$, this argument can similarly be iterated until all of the $y_j's$ have been eliminated from the coefficients, proving the statement about the polynomial ideals. 
\end{proof}

\includepdf[pages=1,  width=1.23\textwidth, pagecommand={
\section{Computer code} \label{sec:code}
In this section we include printout of the code and output for various Maple worksheets needed to prove several key results using techniques from algebraic geometry. The worksheets can be provided upon request.
\subsection{``Spanning'' Proof of Proposition \ref{prop:algebraic-geo-proof-1}}\label{subsec:alg-geo-proof-1}
}, offset= 0 -7em]{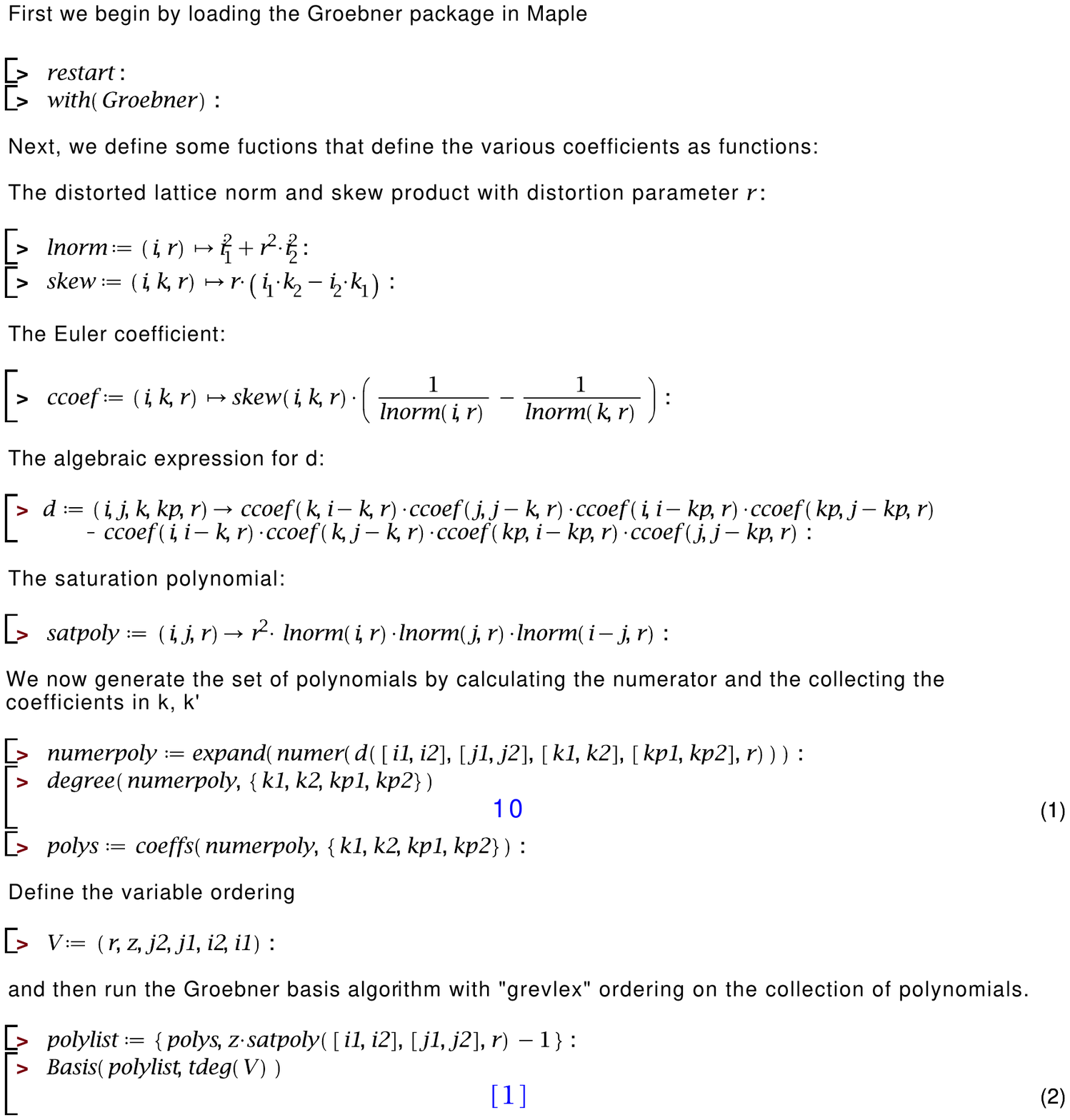}

\includepdf[pages=1, width=1.23\textwidth, pagecommand={
\subsection{``Infinite Dimensional'' Distinctness Proof}\label{subsec:alg-geo-proof-2}
}, offset= 0 -1em]{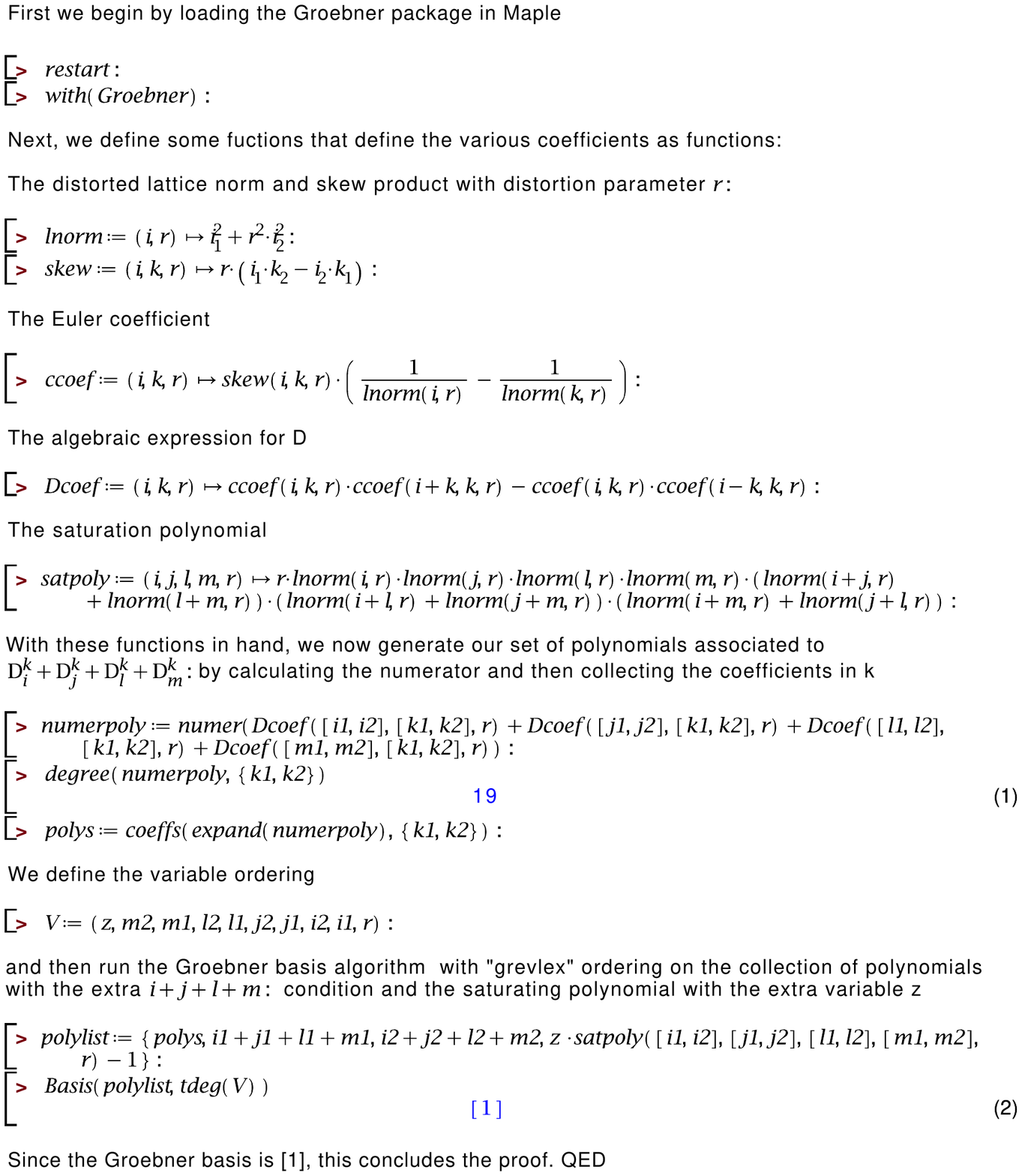}

\includepdf[pages=1, width=1.23\textwidth, pagecommand={
\subsection{Full Galerkin Truncated Distinctness Proof}\label{subsec:alg-geo-proof-3}
}, offset= 0 -1em]{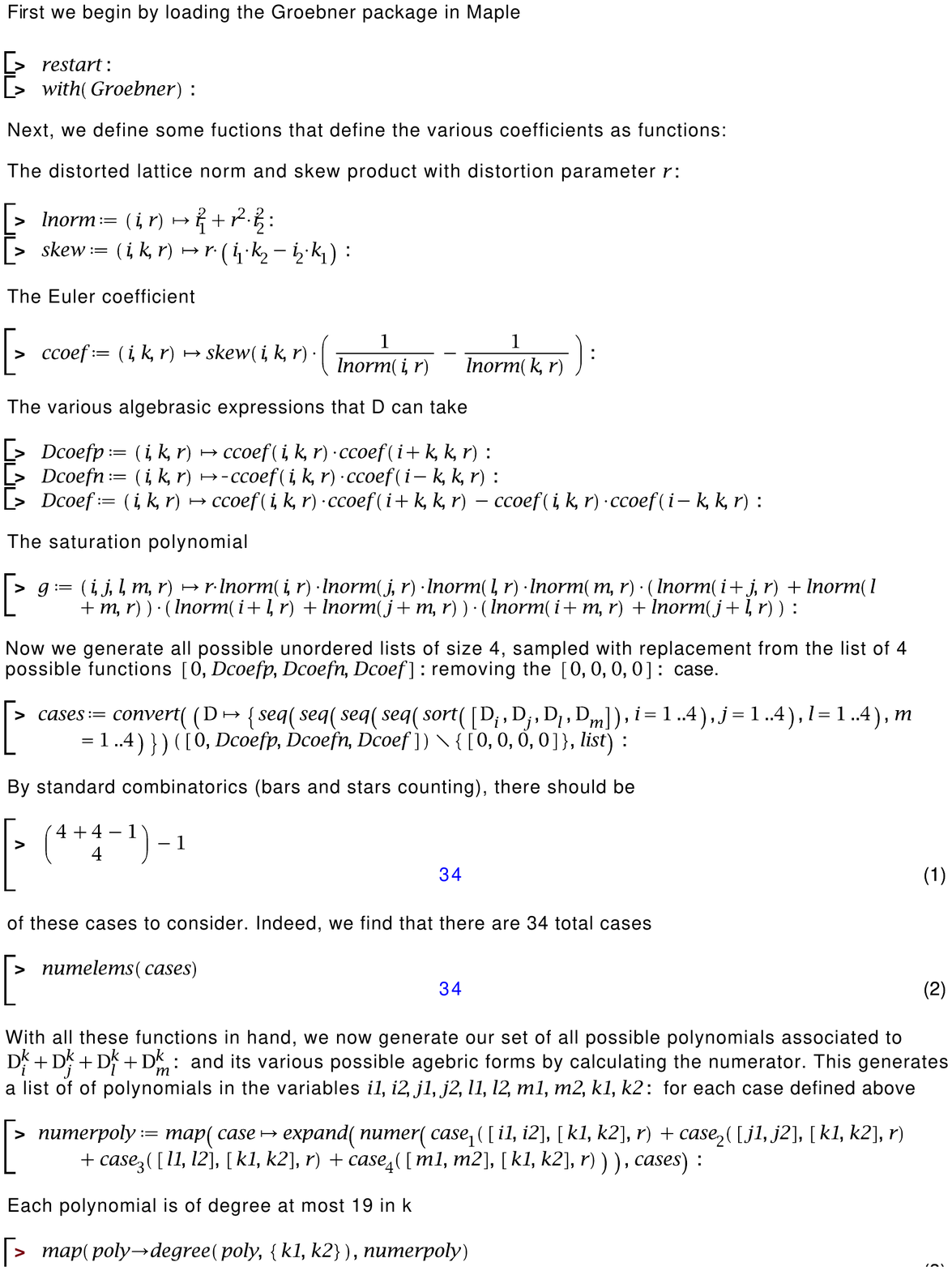}
\includepdf[pages=2, width=1.23\textwidth, pagecommand={}]{Distinctness-Full-Proof.pdf}

\addcontentsline{toc}{section}{References}
\bibliographystyle{abbrv}
\bibliography{bibliography}

@BOOK{Arnold1998-hz,
  title     = "Topological Methods in Hydrodynamics",
  author    = "Arnold, Vladimir I and Khesin, Boris A",
  publisher = "Springer-Verlag New York",
  series    = "Applied Mathematical Sciences",
  edition   =  1,
  year      =  1998
}

@ARTICLE{Zeitlin1991-qf,
  title    = "Finite-mode analogs of {2D} ideal hydrodynamics: Coadjoint orbits
              and local canonical structure",
  author   = "Zeitlin, V",
  abstract = "The algebraic finite-mode hydrodynamic-type systems that have
              O(N) integrals of motion for O(N $\times$ N) modes and are
              intrinsically connected with two-dimensional ideal fluid flows
              are studied. Special emphasis is given on the structure of their
              phase space and the problem of finding proper dynamical variables
              for its description in connection with appropriate problems for
              the original hydrodynamic equations. Various possibilities for
              embedding in the space of canonical variables are demonstrated on
              the simplest example of the su(3) model giving the
              finite-dimensional analogs of Clebsch variables.",
  journal  = "Physica D",
  volume   =  49,
  number   =  3,
  pages    = "353--362",
  month    =  apr,
  year     =  1991
}

@INPROCEEDINGS{Zeitlin1991-gk,
  title     = "Algebraization of {2-D} Ideal Fluid Hydrodynamical Systems and
               Their {Finite-Mode} Approximations",
  booktitle = "Advances in Turbulence 3",
  author    = "Zeitlin, V Yu",
  abstract  = "Starting from the description of ideal 2-D hydrodynamics in the
               framework of the Lie algebra of area-preserving diffeomorphisms
               sdiff two observations are made: 1st --- this construction may
               be generalized by means of the central extension of the latter
               algebra, giving equations equivalent to variants of the
               vorticity equation on the $\beta$-plane; 2nd --- a regular way
               to obtain finite-mode analogs of 2-D hydrodynamical equations
               preserving the essential algebraic features of these latter
               exists due to the relation between sdiff and su(N), N--- ≻
               $\infty$, algebras recently pointed out in literature.",
  publisher = "Springer Berlin Heidelberg",
  pages     = "257--260",
  year      =  1991
}

@ARTICLE{Gallagher2002-sp,
  title    = "Mathematical analysis of a structure-preserving approximation of
              the bidimensional vorticity equation",
  author   = "Gallagher, Isabelle",
  abstract = "We show the consistency and the convergence of a spectral
              approximation of the bidimensional vorticity equation, proposed
              by V. Zeitlin in[13] and studied numerically by I. Szunyogh, B.
              Kadar, and D. D{\'e}v{\'e}nyi in [12], whose main feature is that
              it preserves the Hamiltonian structure of the vorticity equation.",
  journal  = "Numer. Math.",
  volume   =  91,
  number   =  2,
  pages    = "223--236",
  month    =  apr,
  year     =  2002
}

@ARTICLE{Faugere1999-sj,
  title    = "A new efficient algorithm for computing Gr{\"o}bner bases (F4)",
  author   = "Faug{\'e}re, Jean-Charles",
  abstract = "This paper introduces a new efficient algorithm for computing
              Gr{\"o}bner bases. To avoid as much intermediate computation as
              possible, the algorithm computes successive truncated Gr{\"o}bner
              bases and it replaces the classical polynomial reduction found in
              the Buchberger algorithm by the simultaneous reduction of several
              polynomials. This powerful reduction mechanism is achieved by
              means of a symbolic precomputation and by extensive use of sparse
              linear algebra methods. Current techniques in linear algebra used
              in Computer Algebra are reviewed together with other methods
              coming from the numerical field. Some previously untractable
              problems (Cyclic 9) are presented as well as an empirical
              comparison of a first implementation of this algorithm with other
              well known programs. This comparison pays careful attention to
              methodology issues. All the benchmarks and CPU times used in this
              paper are frequently updated and available on a Web page. Even
              though the new algorithm does not improve the worst case
              complexity it is several times faster than previous
              implementations both for integers and modulo p computations.",
  journal  = "J. Pure Appl. Algebra",
  volume   =  139,
  number   =  1,
  pages    = "61--88",
  month    =  jun,
  year     =  1999
}

@ARTICLE{Boothby1979-rs,
  title     = "Determination of the Transitivity of Bilinear Systems",
  author    = "Boothby, William M and Wilson, Edward N",
  abstract  = "We consider a bilinear control system on $\mathbb\{R\}_0^n =
               \mathbb\{R\}^n - \{ 0\}
               $\textbackslash[\textbackslashfrac\{\{dx\}\}\{\{dt\}\}(A_0 +
               \textbackslashsum\textbackslashlimits_\{i = 1\}^r \{u_i (t)A_i
               \} )x,\textbackslash] where $x \in \mathbb\{R\}^n ,A_0$, $A_1 ,
               \cdots ,A_r $ are $n \times n$ real matrices, $\mathfrak\{g\}$
               is the Lie algebra they generate, and $u_1 (t), \cdots ,u_r (t)$
               are real valued control functions. Although there exists a
               standard rank condition in terms of the Lie algebra g which
               Sussman and Jurdjevic have shown to be sufficient to guarantee
               accessibility, it is primarily of theoretical interest, being
               essentially impossible to apply to given data. In this paper the
               authors investigate the possibility of developing algorithms
               involving only rational computations on the matrices, which
               would determine whether the rank condition is everywhere
               satisfied, i.e. whether the bilinear system has the
               accessibility or, in some instances, controllability property.
               This is equivalent to determining whether or not the matrix Lie
               group G generated by $\exp (tA_i )$, $i = 0,1, \cdots ,r$, is or
               is not transitive on $\mathbb\{R\}_0^n $. The possible
               transitive Lie groups have been classified by one of the
               authors. Using this classification, it is shown that there do
               exist various sequences of rational operations on the matrices
               $A_0 ,A_1 , \cdots ,A_r $ which enable one in a finite number of
               steps to decide whether or not the system has the
               accessibility-transitivity property.",
  journal   = "SIAM J. Control Optim.",
  publisher = "Society for Industrial and Applied Mathematics",
  volume    =  17,
  number    =  2,
  pages     = "212--221",
  month     =  mar,
  year      =  1979
}

@ARTICLE{Sasaki1962-rt,
  title     = "On the differential geometry of tangent bundles of Riemannian
               manifolds, {II}",
  author    = "Sasaki, Shigeo",
  abstract  = "Tohoku Mathematical Journal",
  journal   = "TMJ Update",
  publisher = "Tohoku University, Mathematical Institute",
  volume    =  14,
  number    =  2,
  pages     = "146--155",
  month     =  jan,
  year      =  1962,
  keywords  = "53.72;",
  language  = "en"
}

@book{Cox,
	title={Ideals, varieties, and algorithms: an introduction to computational algebraic geometry and commutative algebra},
	author={Cox, David and Little, John and OShea, Donal},
	year={2013},
	publisher={Springer Science \& Business Media}
}

@ARTICLE{E2001-lg,
	title     = "Ergodicity for the {Navier-Stokes} equation with degenerate
	random forcing: Finite-dimensional approximation",
	author    = "E, Weinan and Mattingly, Jonathan C",
	abstract  = "We study Galerkin truncations of the two-dimensional
	Navier-Stokes equation under degenerate, large-scale, stochastic
	forcing. We identify the minimal set of modes that has to be
	forced in order for the system to be ergodic. Our results rely
	heavily on the structure of the nonlinearity. \copyright{} 2001
	John Wiley \& Sons, Inc.",
	journal   = "Commun. Pure Appl. Math.",
	publisher = "John Wiley \& Sons, Inc.",
	volume    =  54,
	number    =  11,
	pages     = "1386--1402",
	month     =  nov,
	year      =  2001
}

@article{F1999,
	title={A new efficient algorithm for computing Gr{\"o}bner bases (F4)},
	author={Faugere, Jean-Charles},
	journal={Journal of pure and applied algebra},
	volume={139},
	number={1-3},
	pages={61--88},
	year={1999},
	publisher={Elsevier}
}

@misc{maple,
title={Maple (2020). Maplesoft, a division of Waterloo Maple Inc., Waterloo, Ontario.}
}

@book{huybrechts2005complex,
	title={Complex geometry: an introduction},
	author={Huybrechts, Daniel},
	year={2005},
	publisher={Springer Science \& Business Media}
}

@article {HM06,
	AUTHOR = {Hairer, Martin and Mattingly, Jonathan C.},
	TITLE = {Ergodicity of the 2{D} {N}avier-{S}tokes equations with degenerate stochastic forcing},
	JOURNAL = {Ann. of Math.},
	FJOURNAL = {Annals of Mathematics. Second Series},
	VOLUME = {164},
	YEAR = {2006},
	NUMBER = {3},
	PAGES = {993--1032},
}

@article{FrischEtAl2012,
	title={Turbulence in noninteger dimensions by fractal Fourier decimation},
	author={Frisch, Uriel and Pomyalov, Anna and Procaccia, Itamar and Ray, Samriddhi Sankar},
	journal={Physical review letters},
	volume={108},
	number={7},
	pages={074501},
	year={2012},
	publisher={APS}
}

@book{Elliott2009,
	title={Bilinear control systems: matrices in action},
	author={Elliott, David},
	volume={169},
	year={2009},
	publisher={Springer Science \& Business Media}
}

@book{JurdGCT,
	title={Geometric control theory},
	author={Jurdjevic, Velimir},
	year={1997},
	publisher={Cambridge university press}
}

@article{BBPS20,
	title={A regularity method for lower bounds on the Lyapunov exponent for stochastic differential equations},
	author={Bedrossian, Jacob and Blumenthal, Alex and Punshon-Smith, Sam},
	journal={arXiv preprint arXiv:2007.15827},
	year={2020}
}

@book{kifer2012ergodic,
  title={Ergodic theory of random transformations},
  author={Kifer, Yuri},
  volume={10},
  year={2012},
  publisher={Springer Science \& Business Media}
}

@article{imkeller1999explicit,
  title={An explicit description of the {Lyapunov} exponents of the noisy damped harmonic oscillator},
  author={Imkeller, Peter and Lederer, Christian},
  journal={Dynamics and Stability of Systems},
  volume={14},
  number={4},
  pages={385--405},
  year={1999},
  publisher={Taylor \& Francis}
}

@article{moshchuk1998moment,
  title={Moment {Lyapunov} exponent and stability index for linear conservative system with small random perturbation},
  author={Moshchuk, Nikolai and Khasminskii, R},
  journal={SIAM Journal on Applied Mathematics},
  volume={58},
  number={1},
  pages={245--256},
  year={1998},
  publisher={SIAM}
}

@article{APW1986,
  title={Asymptotic analysis of the {L}yapunov exponent and rotation number of the random oscillator and applications},
  author={Arnold, Ludwig and Papanicolaou, George and Wihstutz, Volker},
  journal={SIAM Journal on Applied Mathematics},
  volume={46},
  number={3},
  pages={427--450},
  year={1986},
  publisher={SIAM}
}

@article{H11,
  title={On Malliavin's proof of H{\"o}rmander's theorem},
  author={Hairer, Martin},
  journal={Bulletin des sciences mathematiques},
  volume={135},
  number={6-7},
  pages={650--666},
  year={2011},
  publisher={Elsevier}
}

@article{ausMilstein,
title={Asymptotic expansion of {L}yapunov exponent for linear stochastic systems with small noises},
author={E. I. Auslender and G. N. Milstein},
year={1982},
journal={Prikl. Mat. i Mekh.},
volume={46},
issue={3},
pages={358--365 (In Russ.)}
}

@article{pinsky1988lyapunov,
  title={{Lyapunov} exponents of nilpotent {It\^o} systems},
  author={Pinsky, Mark A and Wihstutz, Volker},
  journal={Stochastics: An International Journal of Probability and Stochastic Processes},
  volume={25},
  number={1},
  pages={43--57},
  year={1988},
  publisher={Taylor \& Francis}
}

@article{baxendale2002lyapunov,
  title={{Lyapunov} exponents for small random perturbations of {Hamiltonian} systems},
  author={Baxendale, Peter H and Goukasian, Levon},
  journal={Annals of probability},
  pages={101--134},
  year={2002},
  publisher={JSTOR}
}

@article{pardoux1988lyapunov,
  title={{Lyapunov} exponent and rotation number of two-dimensional linear stochastic systems with small diffusion},
  author={Pardoux, Etienne and Wihstutz, Volker},
  journal={SIAM Journal on Applied Mathematics},
  volume={48},
  number={2},
  pages={442--457},
  year={1988},
  publisher={SIAM}
}

@article{BL20,
    title={Quantitative spectral gaps and uniform lower bounds in the small noise limit for {Markov} semigroups generated by hypoelliptic stochastic differential equations},
    author={Bedrossian, Jacob and Liss, Kyle},
    journal={arXiv:2007.13297},
    year={2020}
}

@article{H67,
  title={Hypoelliptic second order differential equations},
  author={H{\"o}rmander, Lars},
  journal={Acta Mathematica},
  volume={119},
  number={1},
  pages={147--171},
  year={1967},
  publisher={Springer}
}

@book{Ditlevsen2010,
  title={Turbulence and shell models},
  author={Ditlevsen, Peter D},
  year={2010},
  publisher={Cambridge University Press}
}

@article{pesin2010open,
  title={Open problems in the theory of non-uniform hyperbolicity},
  author={Pesin, Yakov and Climenhaga, Vaughn},
  journal={Discrete Contin. Dyn. Syst},
  volume={27},
  number={2},
  pages={589--607},
  year={2010}
}

@book{Trefethen1997,
	title={Numerical linear algebra},
	author={Trefethen, Lloyd N and Bau III, David},
	volume={50},
	year={1997},
	publisher={Siam}
}

@article{Arnoldi1951,
	title={The principle of minimized iterations in the solution of the matrix eigenvalue problem},
	author={Arnoldi, Walter Edwin},
	journal={Quarterly of applied mathematics},
	volume={9},
	number={1},
	pages={17--29},
	year={1951}
}

@book{kunita1997stochastic,
  title={Stochastic flows and stochastic differential equations},
  author={Kunita, Hiroshi},
  volume={24},
  year={1997},
  publisher={Cambridge university press}
}

@article{baxendale1989lyapunov,
  title={Lyapunov exponents and relative entropy for a stochastic flow of diffeomorphisms},
  author={Baxendale, Peter H},
  journal={Probability Theory and Related Fields},
  volume={81},
  number={4},
  pages={521--554},
  year={1989},
  publisher={Springer}
}

@article{dolgopyat2004sample,
  title={Sample path properties of the stochastic flows},
  author={Dolgopyat, Dmitry and Kaloshin, Vadim and Koralov, Leonid and others},
  journal={The Annals of Probability},
  volume={32},
  number={1A},
  pages={1--27},
  year={2004},
  publisher={Institute of Mathematical Statistics}
}

@article{furstenberg1963noncommuting,
  title={Noncommuting random products},
  author={Furstenberg, Harry},
  journal={Transactions of the American Mathematical Society},
  volume={108},
  number={3},
  pages={377--428},
  year={1963},
  publisher={JSTOR}
}

@incollection{ledrappier1986positivity,
  title={Positivity of the exponent for stationary sequences of matrices},
  author={Ledrappier, Fran{\c{c}}ois},
  booktitle={Lyapunov exponents},
  pages={56--73},
  year={1986},
  publisher={Springer}
}

@article{baxendale2004vanderpol,
  title={Stochastic averaging and asymptotic behavior of the stochastic Duffing--van der Pol equation},
  author={Baxendale, Peter H},
  journal={Stochastic processes and their applications},
  volume={113},
  number={2},
  pages={235--272},
  year={2004},
  publisher={Elsevier}
}

@article{young2013mathematical,
  title={Mathematical theory of {Lyapunov} exponents},
  author={Young, Lai-Sang},
  journal={Journal of Physics A: Mathematical and Theoretical},
  volume={46},
  number={25},
  pages={254001},
  year={2013},
  publisher={IOP Publishing}
}

@article{wilkinson2017lyapunov,
  title={What are {Lyapunov} exponents, and why are they interesting?},
  author={Wilkinson, Amie},
  journal={Bulletin of the American Mathematical Society},
  volume={54},
  number={1},
  pages={79--105},
  year={2017}
}

@inproceedings{royer1980croissance,
  title={Croissance exponentielle de produits {Markoviens} de matrices al{\'e}atoires},
  author={Royer, Gilles},
  booktitle={Annales de l'IHP Probabilit{\'e}s et statistiques},
  volume={16},
  number={1},
  pages={49--62},
  year={1980}
}

@article{virtser1980products,
  title={On products of random matrices and operators},
  author={Virtser, AD},
  journal={Theory of Probability \& Its Applications},
  volume={24},
  number={2},
  pages={367--377},
  year={1980},
  publisher={SIAM}
}

@incollection{arnold1995random,
  title={Random dynamical systems},
  author={Arnold, Ludwig},
  booktitle={Dynamical systems},
  pages={1--43},
  year={1995},
  publisher={Springer}
}

@article{kingman1973subadditive,
  title={Subadditive ergodic theory},
  author={Kingman, John Frank Charles},
  journal={The annals of Probability},
  volume={1},
  number={6},
  pages={883--899},
  year={1973},
  publisher={Institute of Mathematical Statistics}
}

@book{BMOV05,
  title={Dynamical systems approach to turbulence},
  author={Bohr, Tomas and Jensen, Mogens H and Paladin, Giovanni and Vulpiani, Angelo},
  year={2005},
  publisher={Cambridge University Press}
}

@incollection{kifer1988note,
  title={A note on integrability of {$C^r$}-norms of stochastic flows and applications},
  author={Kifer, Yuri},
  booktitle={Stochastic Mechanics and Stochastic Processes},
  pages={125--131},
  year={1988},
  publisher={Springer}
}

@article{carverhill1987furstenberg,
  title={Furstenberg's theorem for nonlinear stochastic systems},
  author={Carverhill, Andrew},
  journal={Probability theory and related fields},
  volume={74},
  number={4},
  pages={529--534},
  year={1987},
  publisher={Springer}
}

@article{LvovEtAl98,
  title={Improved shell model of turbulence},
  author={L'vov, Victor S and Podivilov, Evgenii and Pomyalov, Anna and Procaccia, Itamar and Vandembroucq, Damien},
  journal={Physical Review E},
  volume={58},
  number={2},
  pages={1811},
  year={1998},
  publisher={APS}
}

@inproceedings{Lorenz1996,
  title={Predictability: A problem partly solved},
  author={Lorenz, Edward N},
  booktitle={Proc. Seminar on predictability},
  volume={1},
  number={1},
  year={1996}
}

@book{Majda16,
  title={Introduction to turbulent dynamical systems in complex systems},
  author={Majda, Andrew J},
  year={2016},
  publisher={Springer}
}

@article{KP10,
  title={Extensive chaos in the {Lorenz}-96 model},
  author={Karimi, Alireza and Paul, Mark R},
  journal={Chaos: An interdisciplinary journal of nonlinear science},
  volume={20},
  number={4},
  pages={043105},
  year={2010},
  publisher={American Institute of Physics}
}

@article{BCFV02,
  title={Predictability: a way to characterize complexity},
  author={Boffetta, Guido and Cencini, Massimo and Falcioni, Massimo and Vulpiani, Angelo},
  journal={Physics reports},
  volume={356},
  number={6},
  pages={367--474},
  year={2002},
  publisher={Elsevier}
}

@article{OttEtAl04,
  title={A local ensemble {Kalman} filter for atmospheric data assimilation},
  author={Ott, Edward and Hunt, Brian R and Szunyogh, Istvan and Zimin, Aleksey V and Kostelich, Eric J and Corazza, Matteo and Kalnay, Eugenia and Patil, DJ and Yorke, James A},
  journal={Tellus A: Dynamic Meteorology and Oceanography},
  volume={56},
  number={5},
  pages={415--428},
  year={2004},
  publisher={Taylor \& Francis}
}

@article{PazoEtAl08,
  title={Structure of characteristic {Lyapunov} vectors in spatiotemporal chaos},
  author={Paz{\'o}, Diego and Szendro, Ivan G and L{\'o}pez, Juan M and Rodr{\'\i}guez, Miguel A},
  journal={Physical Review E},
  volume={78},
  number={1},
  pages={016209},
  year={2008},
  publisher={APS}
}

@inproceedings{Gledzer1973, 
  title={Hydrodynamic-type system admitting two quadratic integrals of motion},
  author={Gledzer, EB},
  booktitle={Dokl. Akad. Nauk SSSR},
  volume={209},
  pages={1046--1048},
  year={1973}
}

@ARTICLE{Romito2004-rc,
  title    = "Ergodicity of the Finite Dimensional Approximation of the {3D}
              {Navier--Stokes} Equations Forced by a Degenerate Noise",
  author   = "Romito, Marco",
  abstract = "We prove ergodicity of the finite dimensional approximations of
              the three dimensional Navier--Stokes equations, driven by a
              random force. The forcing noise acts only on a few modes and some
              algebraic conditions on the forced modes are found that imply the
              ergodicity. The convergence rate to the unique invariant measure
              is shown to be exponential.",
  journal  = "J. Stat. Phys.",
  volume   =  114,
  number   =  1,
  pages    = "155--177",
  month    =  jan,
  year     =  2004
}

\end{document}